\documentclass[11pt,reqno]{amsart}
\usepackage[cmtip,all]{xy}
\usepackage{graphicx}
\usepackage{amssymb}
\usepackage{mathrsfs}
\usepackage{amsfonts}
\usepackage{cases}
\usepackage{amsmath}
\usepackage{graphicx}
\usepackage{color}
\usepackage{subfig}
\usepackage{enumitem}
\usepackage{hyperref}
\usepackage{appendix}
\usepackage{cite}

\setlength{\textwidth}{6.5in}
\setlength{\oddsidemargin}{0in}
\setlength{\evensidemargin}{0in}
\setlength{\textheight}{8.5in}
\setlength{\topmargin}{0in}
\setlength{\headheight}{0in}
\setlength{\headsep}{.4in}
\setlength{\footskip}{.5in}

\newtheorem{theorem}{Theorem}[section]
\newtheorem{lem}[theorem]{Lemma}
\newtheorem{prop}[theorem]{Proposition}
\newtheorem{corollary}[theorem]{Corollary}

\theoremstyle{definition}
\newtheorem{definition}[theorem]{Definition}

\theoremstyle{remark}
\newtheorem{remark}[theorem]{Remark}

\numberwithin{equation}{section}




\def\bsi{{\mathrm{i}}}
\def\Oh{{\mathcal  O}}
\def\bsi{{\mathrm  i}}
\def \rmd {{\mathrm  d}}
\allowdisplaybreaks[4]

\begin{document}

\title[Pattern formations of coupled PDEs and applications]{Pattern formations of coupled PDE\lowercase{s} with transparent boundary conditions in product-type ends and applications} 

\author{Huaian Diao}
\address{School of Mathematics and Key Laboratory of Symbolic Computation and Knowledge Engineering of Ministry of Education, Jilin University, Changchun, China}
\email{hadiao@gmail.com, diao@jlu.edu.cn}

\author{Hongyu Liu}
\address{Department of Mathematics, City University of Hong Kong, Hong Kong SAR, China}
\email{hongyu.liuip@gmail.com, hongyliu@cityu.edu.hk}

\author{Qingle Meng}
\address{Department of Mathematics, City University of Hong Kong, Hong Kong SAR, China}
\email{mengq12021@foxmail.com, qinmeng@cityu.edu.hk}

\author{Li Wang}
\address{Department of Mathematics, City University of Hong Kong, Hong Kong SAR, China}
\email{liwangmath12@126.com}

\keywords{coupled PDEs, transparent boundary conditions, local pattern formation, high extrinsic curvature, Calder\'on problem, inverse scattering, invisibility}
\thanks{}
\date{}

\subjclass[2010]{35R30, 35P25, 34L25}

\begin{abstract}
This paper studies pattern formations in coupled elliptic PDE systems governed by transparent boundary conditions. Such systems unify diverse areas, including inverse boundary problems (via a single passive/active boundary measurement), spectral geometry of transmission eigenfunctions, and geometric characterization of invisibility phenomena and inverse shape problems in wave scattering. We uncover and rigorously characterize a novel local pattern formation, establishing a sharp quantitative relationship between the difference in the PDEs' lower-order terms and the geometric/regularity parameters within a generic domain's product-type ends—structures characterized by high extrinsic curvature. This foundational result yields new findings with novel physical insights and practical implications across these fields.
\end{abstract}
\maketitle

\section{Mathematical setup and statement of the main results}\label{sub:main results}

We consider the following coupled system of partial differential equations (PDEs) for $u, v \in H^1(\Omega)$:
\begin{equation}\label{eq:sym1}
\left\{
\begin{aligned}
-\nabla \cdot (h_1 \nabla u) &= f(\boldsymbol{x}, u, v) && \text{in } \Omega, \\
-\nabla \cdot (h_2 \nabla v) &= g(\boldsymbol{x}, u, v) && \text{in } \Omega, \\
u = v,\,\, h_1\partial &_{\nu}  u = h_2 \partial_{\nu} v && \text{on } \hat{\Gamma},
\end{aligned}
\right.
\end{equation}
where $h_1$ and $h_2$ are spatially varying coefficients, the nonlinearities $f$ and $g$ depend on the spatial coordinate $\boldsymbol{x}$ and the solution fields $u$ and $v$, and $\Omega \subset \mathbb{R}^n$ ($n=2,3$) is a bounded Lipschitz domain. Here $\hat{\Gamma} \subset \partial\Omega$ denotes a boundary segment, and \( \nu \in \mathbb{S}^{n-1} := \{ \boldsymbol{x} \in \mathbb{R}^n \; | \; |\boldsymbol{x}| = 1 \} \)  represents the outward unit normal to $\partial\Omega$. The boundary conditions on $\hat{\Gamma}$ in \eqref{eq:sym1} are termed transparent boundary conditions (TBCs) due to their characterization of a unique continuation property for solutions. Further discussion of this connection appears in Section~\ref{sec:applications}.
 
It is important to note that the coupling between $u$ and $v$ occurs both within the PDEs and through the TBCs. These coupled systems arise in a variety of problems of considerable theoretical and practical importance, including inverse boundary value problems, the spectral geometry of transmission eigenfunctions, and the geometric characterization of invisibility phenomena in wave scattering. We will discuss these applications in subsequent sections.

This paper focuses on pattern formation in system \eqref{eq:sym1}, specifically investigating the intrinsic relationships among the solutions $u$, $v$, the coefficients $h_1$, $h_2$, the nonlinearities $f$, $g$, and the geometry/topology of the boundary segment $\hat{\Gamma}$. We primarily examine local pattern formation in which $\hat{\Gamma}$ is a proper subset of $\partial \Omega$, considering nontrivial solutions ($u, v \not\equiv 0$) that naturally arise in the aforementioned applications. The geometric configuration for our study comprises domains with product-type ends that are thin or narrow in nature. Next, we introduce the specific geometric setup for our study, which comprises product-type thin or narrow ends of the domain $\Omega$; refer to Fig.~\ref{fig:t} for a schematic illustration of these two types of ends.

Let $\mathcal{N}_{\varepsilon}^T \subset \mathbb{R}^n\,(n=2,3 ) $ represent a thin end constructed by the parallel translation of the cross section $\Omega^T_{\varepsilon}$ along a simple curve $\eta(t) \in C^2$, with $\eta(t): I=(-L, L)\rightarrow \mathbb{R}^n$ being a graph, where \( L \in \mathbb{R}_+ \). Specifically, 
\begin{align}\label{eq:n1}
	\mathcal{N}_\varepsilon^T &:= \Omega^T_\varepsilon \times \eta(t),\ \ \varepsilon \ll 1, \ \ t \in I,
\end{align}
where \( \Omega^T_\varepsilon \) is a bounded, simply-connected Lipschitz domain in \( \mathbb{R}^{n-1} \) for \( n=2,3 \), with \( \mathrm{diam}(\Omega^T_\varepsilon) = \varepsilon \) and \( \partial \Omega^T_\varepsilon \)  being piecewise smooth and parametrizable. Assume there exists a unique \( t_0 \in I \) such that \( \eta(t) \cap \Omega^T_\varepsilon = \{\eta(t_0)\} \), and that \( \eta(t_0) \) is the highest point of \( \Omega^T_\varepsilon \). Furthermore, \( \eta \) is injective, i.e., \( \eta(t_1) \neq \eta(t_2) \) for any distinct \( t_1, t_2 \in I \). 

Similarly, the narrow end is defined as follows:
\begin{align}\label{eq:n2}
	\mathcal{N}_\varepsilon^N &:= \Omega^N_\varepsilon \times \eta(t) \subset \mathbb{R}^3,
\end{align}
 where \( \Omega^N_{\varepsilon} \subset \mathbb{R}^{2} \) is a rectangle with width \( \varepsilon \) and conventional length, while the simple curve \( \eta(t) \) is analogous to the curve defined in \eqref{eq:n1}. Notably, the thin end \( \mathcal{N}^T_\varepsilon \subset \mathbb{R}^3 \) possesses two sufficiently small dimensions, whereas the narrow end \( \mathcal{N}^N_\varepsilon \) has a thickness of \( \varepsilon \), which is significantly smaller than both its length and width.

Throughout this paper, we collectively refer to both thin and narrow ends as \emph{product-type ends}, denoted by 
\begin{equation} \label{eq:N}  
\mathcal{N}_{\varepsilon} := \mathcal{N}_{\varepsilon}^T \quad \text{or} \quad \mathcal{N}_{\varepsilon}^N,
\end{equation} 
defined in \eqref{eq:n1} and \eqref{eq:n2}. The lateral boundary of $\mathcal{N}_{\varepsilon}$ is given by
\begin{align}\label{eq:lateral}
	\hat{\Gamma} := \partial \Omega_\varepsilon \times \eta(t), \quad \text{where} \quad \Omega_{\varepsilon} = \Omega_{\varepsilon}^T \quad \text{or} \quad \Omega_{\varepsilon}^N.
\end{align}
Here, \( \varepsilon \) serves as the geometric parameter characterizing \( \mathcal{N}_\varepsilon \) and plays a crucial role in our analysis. Furthermore, we assume that the following geometric condition holds for product-type ends  $\mathcal{N}_{\varepsilon}$: for any point $\boldsymbol{x} \in \mathcal{N}_{\varepsilon}$, there exists a point $\boldsymbol{x}_0 \in \hat{\Gamma}$ such that $\boldsymbol{x}_0 - \boldsymbol{x}$ and $\boldsymbol{\nu}_{\boldsymbol{x}_0}$ are linearly dependent, where $\boldsymbol{\nu}_{\boldsymbol{x}_0}$ denotes the exterior unit normal vector to $\hat{\Gamma}$ at $\boldsymbol{x}_0$. In other words, $\boldsymbol{x}_0 - \boldsymbol{x}$ is colinear with the exterior unit normal $\boldsymbol{\nu}_{\boldsymbol{x}_0}$.  We refer to this condition as \textbf{Assumption G}. This is a generic assumption, and a detailed discussion is provided in Appendix \ref{sec:apA}. In fact, when cross-section $\Omega_{\varepsilon}\subset\mathcal{N}_{\varepsilon}$ is a Lipschitz domain for \(n=2\) and a convex domain with $C^2$ boundary for \(n=3\), we rigorously prove  \textbf{Assumption G} in Theorem \ref{th:cover}. If $\Omega_\varepsilon$ is a line segment in $\mathbb{R}^1$ or a rectangle in $\mathbb{R}^2$, it is readily seen that the generated domain $\mathcal{N}_{\varepsilon}$ fulfills \textbf{Assumption G}.




\begin{figure}[htbp]
	\subfloat{
	\includegraphics[width=6cm,height=3.5cm]{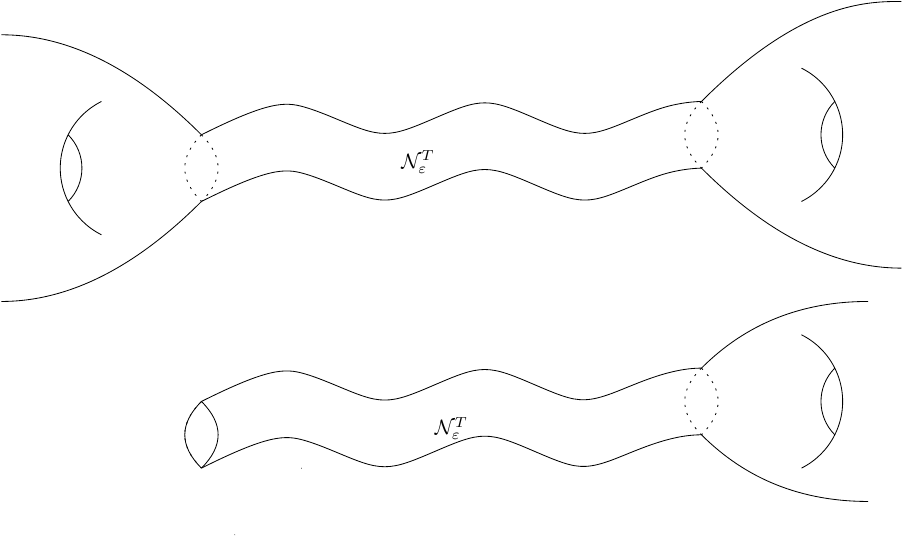}}
	\hfill
	\centering
	\subfloat{
	\includegraphics[width=9cm,height=5cm]{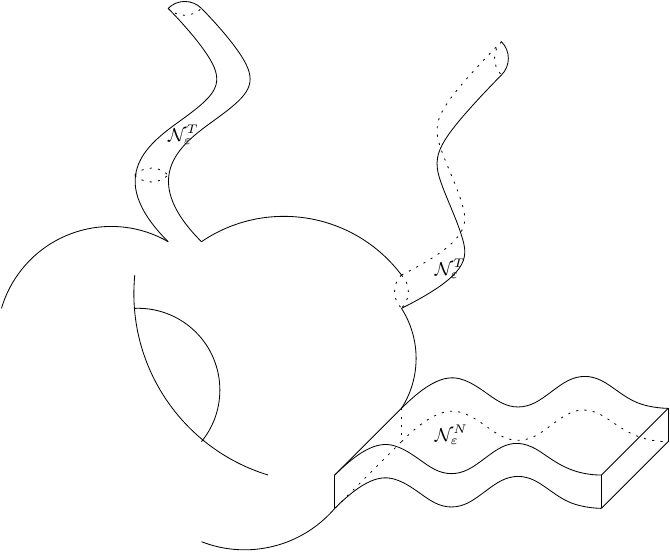}}
	\caption{Schematic illustration of the thin and narrow ends in $\mathbb{R}^3$}
	\label{fig:t}
\end{figure}

To motivate this construction, consider a generic domain \( \Omega \) as in \eqref{eq:sym1}, which contains a boundary point \( p \in \partial \Omega \) with high mean curvature \( K > 0 \) (see Fig.~\ref{fig:extrinsic points}). Locally opening the boundary near \( p \) and attaching a product-type end \( \mathcal{N}_\varepsilon \) extends the domain, with the scaling \( \varepsilon \sim K^{-1} \) relating the geometric parameter to the curvature. Thus, \( \mathcal{N}_\varepsilon \) constitutes a high-curvature end of the extended domain, where each interior point exhibits extrinsic curvature of order \( \varepsilon^{-1} \) when viewed from the regular part of \( \Omega \).

\begin{figure}
 \centering
\includegraphics[width=6.5cm,height=4cm
 ]{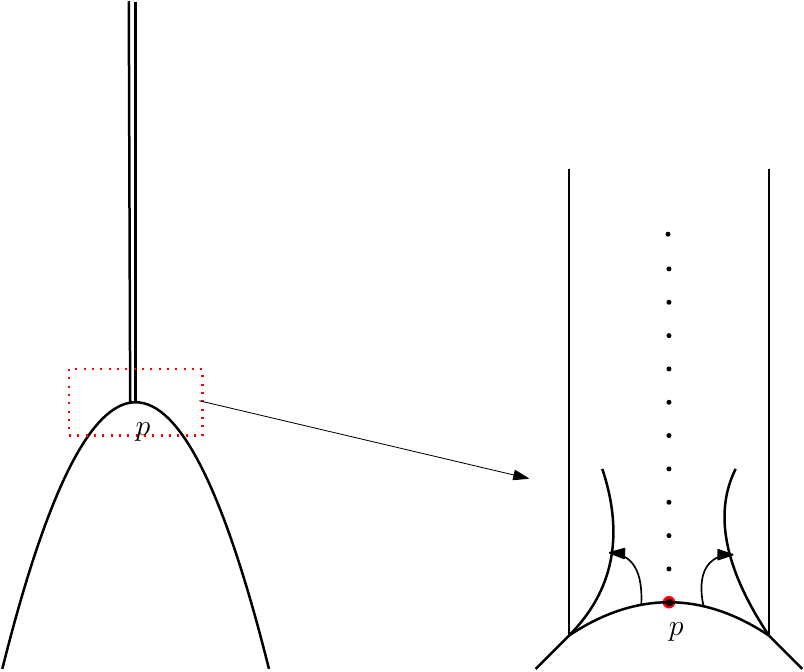}
\caption{
Schematic of extrinsic curvature points in the product-type end}\label{fig:extrinsic points} 
\end{figure}

Now, we can fix the geometric setup of our study by considering that \( \Omega \subset \mathbb{R}^n \) is a Lipschitz domain with product-type ends (see Figure \ref{fig:t}), defined by
\begin{equation}\label{eq:multi}
\mathcal{N}_\varepsilon = \bigcup_{l=1}^M \mathcal{N}_\varepsilon^l \subset \Omega,
\end{equation}
where each \( \mathcal{N}_\varepsilon^l \) corresponds to a thin or narrow end, and \( \mathcal{N}_\varepsilon^i \) and \( \mathcal{N}_\varepsilon^j \) are pairwise disjoint for \( i \neq j \) with \( 1 \leq i, j \leq M \). The subset \( \hat{\Gamma} \subset \partial \Omega \) is defined in \eqref{eq:lateral}. Moreover, the nonlinearities \( f \) and \( g \) in \eqref{eq:sym1}, involving three variables \( (\boldsymbol{x}, z, p) \in \Omega \times \mathbb{C} \times \mathbb{C} \), 
satisfy the following regularity conditions, collectively termed \textbf{Assumption H}:
\begin{itemize}
    \item[(a)] For \( u(\cdot), v(\cdot) \in H^1(\Omega) \) and \( h_1(\cdot), h_2(\cdot) \in L^\infty(\Omega) \), \( f(\boldsymbol{x}, z, p) \) and \( g(\boldsymbol{x}, z, p) \) belong to \( L^2(\Omega) \).
    \item[(b)] Nonlinearities \( f(\boldsymbol{x}, z, p) \) and \( g(\boldsymbol{x}, z, p) \) are \( C^{0,\zeta} \)-continuous, with \( \zeta \in (0,1) \), concerning \( (\boldsymbol{x}, z, p) \in \mathcal{N}_\varepsilon \times \mathbb{C} \times \mathbb{C} \). Specifically, when any two of the variables \( (\boldsymbol{x}, z, p) \) are fixed, \( f(\cdot) \) and \( g(\cdot) \) are \( C^{0,\zeta} \)-continuous with respect to the remaining variable. The \( C^{0,\zeta} \)-norm is defined as follows:
    \begin{align*}
        \hspace{1.4cm} \|h(\boldsymbol{x}, z, p)\|_{C^{0,\zeta}(\mathcal{N}_\varepsilon \times \mathbb{C} \times \mathbb{C})} = \sup \Big\{ 
        &\|h(\boldsymbol{x}, z_0, p_0)\|_{C^{0,\zeta}(\mathcal{N}_\varepsilon)}, \|h(\boldsymbol{x}_0, z, p_0)\|_{C^{0,\zeta}(\mathbb{C})}, \\
        &\|h(\boldsymbol{x}_0, z_0, p)\|_{C^{0,\zeta}(\mathbb{C})} \colon \boldsymbol{x}_0 \in \mathcal{N}_\varepsilon, z_0 \in \mathbb{C}, p_0 \in \mathbb{C} \text{ fixed} \Big\},
    \end{align*}
    for \( h(\cdot) = f(\cdot) \) or \( h(\cdot) = g(\cdot) \). Here, \( \|h(\boldsymbol{x}, z_0, p_0)\big\|_{C^{0,\zeta}(\mathcal{N}_\varepsilon)} \) is defined as:
    $$
    \|h(\boldsymbol{x}, z_0, p_0)\big\|_{C^{0,\zeta}(\mathcal{N}_\varepsilon)} =
    \underset{\boldsymbol{x} \in \mathcal{N}_\varepsilon}{\sup}
    \big|h(\boldsymbol{x}, z_0, p_0)\big| + \underset{\substack{\boldsymbol{x}, \boldsymbol{y} \in \mathcal{N}_\varepsilon \\ \boldsymbol{x} \neq \boldsymbol{y}}}{\sup} \frac{\left|h(\boldsymbol{x}, z_0, p_0) - h(\boldsymbol{y}, z_0, p_0)\right|}{\left|\boldsymbol{x} - \boldsymbol{y}\right|^\zeta}.
    $$
    The H\"older norms \( \|h(\boldsymbol{x}_0, \cdot, p_0)\|_{C^{0,\zeta}(\mathbb{C})} \) and \( \|h(\boldsymbol{x}_0, z_0, \cdot)\|_{C^{0,\zeta}(\mathbb{C})} \) are defined analogously.
    \item[(c)] For the nonlinearities \( f(\boldsymbol{x}, z, p) \) and \( g(\boldsymbol{x}, z, p) \) that satisfy \((\mathrm{a})\) and \((\mathrm{b})\), we assume the coupled PDE system \eqref{eq:sym1} admits a non-trivial solution pair \((u, v)\).
\end{itemize}

The coupled PDE system \eqref{eq:sym1} arises in the study of uniqueness for Calder\'on's inverse boundary value problems and inverse quantum or acoustic scattering. In the context of invisible quantum or acoustic scattering, the system \eqref{eq:sym1} takes special forms with specific choices of the functions \( h_1 \), \( h_2 \), \( f \), and \( g \). For further details, see Subsections \ref{sub:calder} and \ref{sub:visi}.

\textbf{Assumption H} for the system \eqref{eq:sym1} is generic, as it holds across a wide range of physical scenarios, including investigations of uniqueness in Calder\'on's inverse boundary value problems and inverse quantum or acoustic scattering, as well as studies of invisibility in quantum and acoustic scattering under general conditions. Additional discussion of this point is provided in the subsequent sections.

We now state the main result, which characterizes local pattern formations in the coupled PDE system \eqref{eq:sym1} and establishes a sharp quantitative relationship between differences in the PDEs' lower-order terms and the geometric/regularity parameters of product-type ends. The novel findings in Theorem \ref{thm:1} provide significant theoretical and practical insights across multiple disciplines.

\begin{theorem}\label{thm:1}
Let \( (u,v) \) be a solution to the coupled PDE system \eqref{eq:sym1}. Suppose there exist positive constants \( C_1 \), \( C_2 \), \( \alpha_1 \in (0,1) \), \( \alpha_2 \in (0,1) \), \( \zeta \in (0,1) \), and \( \alpha_3 \in [2,\infty) \), all independent of \( \varepsilon \) with \( \alpha_1 \leq \alpha_2 \), such that the a priori conditions are as follows:
\begin{itemize}
\item the solution \( (u,v) \) satisfies
    \begin{equation}\label{eq:ap1}
 u, v \in H^1_{loc}(\mathcal{N}_\varepsilon) \cap C^{1 ,\alpha_1}(\overline{\mathcal{N}}_\varepsilon) \ \ \mbox{with}\ \ \|u\|_{C^{1,\alpha_1}(\overline{\mathcal{N}}_\varepsilon)},\,\, \|v\|_{C^{1,\alpha_1}(\overline{\mathcal{N}}_\varepsilon}) \leq C_1,
\end{equation}
where \( \alpha_1 \in \left(\frac{1}{2},1\right) \) in \( \mathbb{R}^2 \) and \( \alpha_1 \in \left(\frac{2}{3},1\right) \) in \( \mathbb{R}^3 \), and \( \mathcal{N}_{\varepsilon} \subset \Omega; \)
\item the coefficients \( h_1 \) and \( h_2 \) in the system \eqref{eq:sym1} satisfy
    \begin{equation}\label{eq:h}
	\hspace{0.7cm} h_1,h_2 \in C^{2,\alpha_2}(\overline{\mathcal{N}}_{\varepsilon}) \ \ \mbox{with} \ \ 0 < M_2 \leq h_1,h_2 \leq M_1 \ \ \mbox{and} \ \ \|h_1-h_2\|_{C^{2,\alpha_1}(\partial \mathcal{N}_\varepsilon)} \leq \varepsilon^{\alpha_3};
\end{equation}
\item the right-hand sides \( f \) and \( g \) of the system \eqref{eq:sym1} satisfy 
    \begin{equation}\label{eq:fg1}
 f,g \in C^{0,\zeta}(\overline{\mathcal{N}}_\varepsilon \times \mathbb{C} \times \mathbb{C}) \ \ \mbox{with} \ \ \|f\|_{C^{0,\zeta}(\overline{\mathcal{N}}_\varepsilon \times \mathbb{C} \times \mathbb{C})}, \,\, \|g\|_{C^{0,\zeta}(\overline{\mathcal{N}}_{\varepsilon} \times \mathbb{C} \times \mathbb{C})} \leq C_2.
\end{equation}
\end{itemize}
Then, for any \( \boldsymbol{x} \in \mathcal{N}_\varepsilon \), there exists a positive constant \( \varepsilon_0 \) such that if \( \varepsilon < \varepsilon_0 \),
\begin{equation}\label{eq:fg}
\Big| f(\boldsymbol{x},u(\boldsymbol{x}),v(\boldsymbol{x})) - g(\boldsymbol{x},u(\boldsymbol{x}),v(\boldsymbol{x})) \Big| \leq C( \varepsilon_0,M_1,M_2,C_1,C_2) \varepsilon^{\tau},
\end{equation}
where \( \tau \in (0,1) \) and \( C( \varepsilon_0, M_1, M_2, C_1, C_2) \) is a generic positive constant that depends only on \( \varepsilon_0, M_1, M_2, C_1, C_2 \).
\end{theorem}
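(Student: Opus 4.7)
The plan is to recast the pointwise bound on $|f-g|$ as the smallness of the divergence of a small flux, and then extract a pointwise estimate from an averaged identity via Hölder regularity. Introduce the flux mismatch
\[
\boldsymbol{J} := h_1\nabla u - h_2\nabla v.
\]
The coupled PDEs in \eqref{eq:sym1} give $\nabla\cdot\boldsymbol{J} = g-f$ pointwise in $\mathcal{N}_\varepsilon$, while the two TBCs combine to $\boldsymbol{J}\cdot\boldsymbol{\nu}\equiv 0$ on $\hat{\Gamma}$.

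First I would establish $\|\boldsymbol{J}\|_{L^\infty(\mathcal{N}_\varepsilon)}\leq C\varepsilon^{\alpha_1}$. Write $\boldsymbol{J} = h_1\nabla(u-v) + (h_1-h_2)\nabla v$. At any $\boldsymbol{x}_0\in\hat{\Gamma}$, the trace equality $u=v$ forces $\nabla_\tau u = \nabla_\tau v$, while $h_1\partial_\nu u = h_2\partial_\nu v$ combined with $|h_1-h_2|\leq\varepsilon^{\alpha_3}$ from \eqref{eq:h} forces $|\partial_\nu u - \partial_\nu v|\leq C\varepsilon^{\alpha_3}$, and hence $|\boldsymbol{J}(\boldsymbol{x}_0)|\leq C\varepsilon^{\alpha_3}$. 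For each interior $\boldsymbol{x}\in\mathcal{N}_\varepsilon$, Assumption G together with the $\varepsilon$-thinness of the cross-section produces $\boldsymbol{x}_0\in\hat\Gamma$ along the exterior normal with $|\boldsymbol{x}-\boldsymbol{x}_0|\leq C\varepsilon$. Since the regularity assumptions \eqref{eq:ap1} and \eqref{eq:h} make $\boldsymbol{J}\in C^{0,\alpha_1}(\overline{\mathcal{N}_\varepsilon})$ with an $\varepsilon$-independent seminorm, Hölder interpolation then yields $|\boldsymbol{J}(\boldsymbol{x})|\leq |\boldsymbol{J}(\boldsymbol{x}_0)|+C|\boldsymbol{x}-\boldsymbol{x}_0|^{\alpha_1}\leq C\varepsilon^{\alpha_1}$.

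Next, fix $\boldsymbol{x}\in\mathcal{N}_\varepsilon$ away from the caps and set $D := B_r(\boldsymbol{x})\cap\mathcal{N}_\varepsilon$ for $r$ to be chosen. The divergence theorem together with $\boldsymbol{J}\cdot\boldsymbol{\nu}=0$ on the $\hat\Gamma$-part of $\partial D$ gives
\[
\bigg|\int_D (f-g)\,d\boldsymbol{y}\bigg| \;\leq\; \|\boldsymbol{J}\|_{L^\infty}\cdot|\partial B_r\cap\mathcal{N}_\varepsilon|.
\]
A direct surface-to-volume computation using the thinness of $\mathcal{N}_\varepsilon$ yields $|\partial B_r\cap\mathcal{N}_\varepsilon|/|D|\leq C/r$ uniformly in $r$, so the average of $f-g$ over $D$ is at most $C\varepsilon^{\alpha_1}/r$. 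Assumption H and the Lipschitz regularity of $u,v$ from \eqref{eq:ap1} make the composition $\boldsymbol{y}\mapsto (f-g)(\boldsymbol{y},u(\boldsymbol{y}),v(\boldsymbol{y}))$ uniformly $C^{0,\zeta}$, so its oscillation on $D$ is at most $Cr^\zeta$. Combining, $|(f-g)(\boldsymbol{x})|\leq C(\varepsilon^{\alpha_1}/r + r^\zeta)$, and the choice $r=\varepsilon^{\alpha_1/(1+\zeta)}$ produces the theorem's bound with $\tau=\alpha_1\zeta/(1+\zeta)\in(0,1)$. Points within $\varepsilon^{\alpha_1/(1+\zeta)}$ of a cap are absorbed by transferring the bound from a comparable interior point via the same $C^{0,\zeta}$ modulus.

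The main obstacle will be the geometric bookkeeping: verifying $|\partial B_r\cap\mathcal{N}_\varepsilon|/|D|\leq C/r$ uniformly for both the thin end $\mathcal{N}_\varepsilon^T$ and the narrow end $\mathcal{N}_\varepsilon^N$ in $\mathbb{R}^2$ and $\mathbb{R}^3$, and decomposing $\partial D$ cleanly so that cap contributions do not spoil the divergence identity at the optimized radius. A related subtlety is securing the $\varepsilon$-uniform $C^{0,\alpha_1}$ bound on $\boldsymbol{J}$ over the $\varepsilon$-dependent domain, which relies on the $\varepsilon$-independence of the constants $C_1$, $M_1$, $M_2$ in \eqref{eq:ap1} and \eqref{eq:h}.
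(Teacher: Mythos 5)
Your proposal is correct in its essentials but follows a genuinely different route from the paper. The paper conjugates by $h_i^{1/2}$, derives the system \eqref{eq:wu1} for $\tilde w=\tilde u-\tilde v$, and then tests against CGO solutions $u_0=e^{\rho\cdot\boldsymbol{x}}$ over carefully translated subregions $D_\varepsilon$, extracting the bound from delicate asymptotics in $s=\varepsilon^\beta$ under the constraints \eqref{eq:beta1F}, \eqref{eq:beta 3F}; this is where the restrictions $\alpha_1>\tfrac12$ (2D), $\alpha_1>\tfrac23$ (3D) and the exponents \eqref{eq:tau2}, \eqref{eq:tau3} come from. You instead observe that the TBCs make the flux $\boldsymbol{J}=h_1\nabla u-h_2\nabla v$ have exactly vanishing normal trace on $\hat\Gamma$ while $\nabla\cdot\boldsymbol{J}=g-f$, obtain $\|\boldsymbol{J}\|_{L^\infty}\lesssim\varepsilon^{\alpha_1}$ from the boundary smallness plus Assumption G and the $\varepsilon$-uniform $C^{0,\alpha_1}$ control (this mirrors the paper's Taylor-expansion step \eqref{eq:w exp t}--\eqref{eq:xx0}, but exploits the exact cancellation $\boldsymbol{J}\cdot\nu=0$ that the paper does not use directly), and then convert this into a pointwise bound by averaging the divergence identity over $B_r(\boldsymbol{x})\cap\mathcal{N}_\varepsilon$ and optimizing $r$, which yields the cleaner and generally sharper exponent $\tau=\alpha_1\zeta/(1+\zeta)$ (e.g.\ $5/13$ versus the paper's $1/18$ for $\zeta=\tfrac67$, $\alpha_1=\tfrac56$) without any CGO machinery or the lower bounds on $\alpha_1$. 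What remains to be nailed down on your side is exactly what you flag: the uniform surface-to-volume estimate $|\partial B_r\cap\mathcal{N}_\varepsilon|/|B_r\cap\mathcal{N}_\varepsilon|\le C/r$ for both thin and narrow ends (this uses the near-straightness \eqref{eq:gamm} and transversality of $\partial B_r$ to the swept cross-sections, and works because the cross-sectional area cancels in the ratio), the validity of the divergence theorem on $B_r\cap\mathcal{N}_\varepsilon$ (handled by a generic choice of $r$ or by the Gauss--Green formula for continuous fields with bounded divergence on sets of finite perimeter, in the spirit of the paper's Lemma \ref{lem:Green}), and the $\varepsilon$-uniform H\"older bound on $h_1,h_2$ entering $[\boldsymbol{J}]_{C^{0,\alpha_1}}$ — the last point is implicit in \eqref{eq:h} and is assumed in exactly the same way in the paper's own estimates of $J_{\tilde w}$ and $J_{\tilde v}$. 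The trade-off is that the paper's CGO framework is what it reuses verbatim for the corollaries (transmission eigenfunctions, visibility), whereas your argument proves Theorem \ref{thm:1} itself more elementarily and with a better rate.
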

\begin{remark}The TBCs (i.e., \( u = v \) and \(h_1 \partial_{\nu} u = h_2\partial_{\nu} v \)) on \( \hat{\Gamma} \) in \eqref{eq:sym1}, along with the geometric construction of product-type ends, only guarantee pointwise approximations between the \( 0 \)th-order and \( 1 \)st-order derivatives of \( u \) and \( v \) within the product-type ends. However, these conditions do not yield pointwise approximations for the second-order and higher derivatives of \( u \) and \( v \) in $\mathcal{N}_{\varepsilon}$. Consequently, the result in Theorem~\ref{thm:1} is both significant and non-trivial.

Furthermore, it is important to emphasize that \( \tau \) in \eqref{eq:fg} is determined by the regularity parameters \( \alpha_1 \) and \( \zeta \); see \eqref{eq:tau2} for 2D and \eqref{eq:tau3} for 3D for details. Based on the observations in equations \eqref{eq:tau2} and \eqref{eq:tau3}, \( \tau \) increases with these regularity parameters, leading to progressively better approximations of \( f \) to \( g \) in \( \mathcal{N}_{\varepsilon} \). Moreover, when the coefficients \( h_1 \) and \( h_2 \) in \eqref{eq:sym1} are both equal to 1, the estimate \eqref{eq:fg} remains valid for a bounded simply-connected Lipschitz domain \( \Omega^T_\varepsilon \), without requiring piecewise smoothness or parameterization of the boundary \( \partial \Omega^T_\varepsilon \). Indeed, \eqref{eq:fg1} follows directly from the assumptions (a)-(c) in {\bf Assumption H} related to the system \eqref{eq:sym1}. The configuration lists \eqref{eq:ap1} and \eqref{eq:h} are reasonable and can be satisfied under generic conditions, particularly in the context of the invisibility of the medium scatterer. Please refer to Remark \ref{rem:CD} for further details.

\end{remark}

\section{Applications and connection with existing results}\label{sec:applications}
\subsection{Calder\'on's inverse boundary value problems}\label{sub:calder}
The Calder\'on's problem \cite{Calderon1980}, which seeks to recover the electrical conductivity of a medium through voltage and current measurements taken at its boundary, has inspired extensive research on inverse boundary value problems (IBVPs). Subsequent work has extended to both linear and nonlinear regimes, including elliptic \cite{IY2013, Isakov1994, sun2010, Uhlmann2009,IUY2010,KSU2007,SU1987}, parabolic  \cite{Isakov1993,LLLS1}, hyperbolic equations  \cite{CLOP2021, HUW2020,  sun2005,LLL1}, as well as Calder\'on-type inverse problems for quasilinear equations defined on Riemannian manifolds \cite{ LLLS2021,LLS2020} and related references.

Building on the local pattern formations established earlier, we derive novel, unique identifiability results for inverse boundary problems associated with the nonlinear PDE:
\begin{equation}\label{eq:ibvp}
\begin{cases}
    -\nabla \cdot (h \nabla u) = f(\boldsymbol{x},u) & \text{in } \Omega, \\
    u|_{\partial \Omega} = \psi & \text{on } \partial \Omega,
\end{cases}
\end{equation}
where \(\Omega \subset \mathbb{R}^n\) (\(n = 2, 3\)) is a Lipschitz domain with thin or narrow ends, \(\psi \in H^{1/2}(\partial \Omega)\), \(h \in L^\infty(\Omega)\), and \(f\) belong to an a priori admissible class \(\mathcal{M}\). Here, \(\psi\) represents boundary quantities (temperature, concentration, or population in different physical contexts), while \(h\, \partial_{\nu} u |_{\partial \Omega}\) denotes the equilibrium flux across the boundary, with \(u\) solving \eqref{eq:ibvp}. For appropriate \(\psi\in H^{1/2}(\partial\Omega)\), we assume existence of solutions \(u \in H^1(\Omega)\) (see e.g., \cite{DFLW2022, Hormander1983, LLLS2021}). It is essential to note that we only assume the existence of a solution to \eqref{eq:ibvp} and do not assume its uniqueness; therefore, multiple solutions may exist. 

We study the inverse problem of identifying the nonlinearity $f$ in \eqref{eq:ibvp} from partial Cauchy data on $\hat{\Gamma}$ (defined in \eqref{eq:lateral}). To formulate this, we introduce the Dirichlet-to-Neumann (DtN) map $\Lambda_f : \psi \mapsto h\,\partial_{\nu}u|_{\partial \Omega}$, which associates Dirichlet boundary data with the corresponding Neumann data. The inverse problem is then expressed as:
\begin{equation}\label{eq:S}
\big(\psi, \Lambda_f(\psi)\big)\big|_{\hat{\Gamma}} \longrightarrow f \quad \text{for a fixed } \psi \in H^{1/2}(\partial\Omega).
\end{equation}

While most existing studies of Calder\'on-type inverse problems utilize boundary measurements $(\psi,\Lambda_{f}(\psi))|_{\hat{\Gamma}}$ with $\psi$ varying over a function space (representing infinitely many measurements), our work fixes $\psi$ to yield a single measurement. We distinguish two scenarios:
\begin{itemize}
    \item \textit{Single active measurement}: When $\psi$ is actively prescribed and $(\psi,\Lambda_{f}(\psi))|_{\hat{\Gamma}}$ is recorded;
    \item \textit{Single passive measurement}: When a physical process inside $\Omega$ generates $u$ naturally and $(u, h\partial_\nu u)|_{\hat{\Gamma}}$ is passively observed. 
\end{itemize}
Noting that $\hat{\Gamma} \Subset \partial\Omega$, our inverse problem \eqref{eq:S} thus employs a \textit{single partial boundary measurement} (active or passive), which constitutes longstanding challenges in the field.
 
Under generic conditions, we can establish the approximate unique identifiability of \eqref{eq:S} within an a priori admissible class of \( f \) using a single measurement on \( \hat{\Gamma} \). To that end, we first introduce
\begin{definition}({\bf  Admissible class $\mathcal{M}$}) 
We define $f(\boldsymbol{x},z):\Omega\times \mathbb{C} \longrightarrow \mathbb{C}$, and $h\in L^\infty(\Omega)$ to be in the class $\mathcal{M}$ if it satisfies the following admissibility conditions:
  \begin{itemize}
  \item[(a)]
  for $u\in H^1(\Omega)$, the function $f(\boldsymbol{x},u)\in L^2(\Omega)$ is $C^1$-continuous with respect to $u$ for each fixed $\boldsymbol{x}\in\Omega$, and $\partial_uf(\boldsymbol{x},u)\in L^\infty(\Omega)$;
  \item[(b)] in $\mathcal{N}_\varepsilon$, $u\in C^{1,\alpha_1}(\overline{\mathcal{N}}_\varepsilon)$ for $\alpha_1\in(0,1)$, $h\in C^{2,\alpha_2}(\overline{\mathcal{N}}_{\varepsilon})$ for $\alpha_2\in(0,1)$, and $h\in (M_2, M_1)$ for positive constants $M_2$ and $M_1$. Additionally, $f(\boldsymbol{x},z)$ is $C^{0,\zeta}$-continuous with respect to $(\boldsymbol{x},z)\in \mathcal{N}_\varepsilon\times \mathbb{C}$, and $\zeta\in (0,1)$;
  \item[(c)] for the right-hand term $f(\boldsymbol{x},u)$ satisfying conditions (a) and (b), we assume the existence of a solution to \eqref{eq:ibvp} for some appropriate \( \psi \in H^{1/2}(\partial \Omega) \). 
  \end{itemize}
\end{definition}

The admissible assumptions on the nonlinear term $f$ are incorporated within the constraints of the nonlinear coupled right-hand sides $f(\cdot)$ and $g(\cdot)$ in system \eqref{eq:sym1}. We now establish the main theoretical result for the inverse problem \eqref{eq:ibvp}, demonstrating that under generic conditions, $f(\boldsymbol{x}, z)\in\mathcal{M}$ can be stably reconstructed up to an approximation from a single boundary measurement pair $\big(\psi, \Lambda_{f}(\psi)\big)|_{\hat{\Gamma}}$.
\begin{theorem}\label{th:ibvp}
Let \(\Omega \subset \mathbb{R}^n\,(n=2,3)\) be a bounded Lipschitz domain with thin or narrow ends $\mathcal{N}_{\varepsilon}$. Assume that \( f_j \) and $h$ belong to the admissible class \( \mathcal{M} \) and \( \Lambda_{f_j} \) is the DtN map associated with \eqref{eq:ibvp} for \( j=1,2 \). For a fixed admissible \( \psi \in H^{1/2}(\hat{\Gamma}) \) with \( \hat{\Gamma} \subset \partial \Omega \), if
\begin{equation*}
    \left(\psi,\Lambda_{f_1}(\psi)\right)\big|_{\hat{\Gamma}}=\left(\psi,\Lambda_{f_2}(\psi)\right)\big|_{\hat{\Gamma}},
\end{equation*}
 then it holds that
\begin{equation}\label{eq:err}
\big|f_1(\boldsymbol{x},u_1(\boldsymbol{x}))-f_2(\boldsymbol{x},u_2(\boldsymbol{x})) \big|\leq C(\varepsilon_0,M_1,M_2,C_1,C_2)  \varepsilon^{\tau}\quad \mbox{for any }\,\,\boldsymbol{x}\in \mathcal{N}_{\varepsilon},
\end{equation}
where \( u_j\,(j=1,2) \) are solutions to \eqref{eq:ibvp} corresponding to \( f_j \) and satisfying \eqref{eq:ap1}. Here, \( \tau \in (0,1) \) and  \( \varepsilon_0,M_1,M_2, C_1, C_2 \) are specified in Theorem \ref{thm:1} and are independent of \( \varepsilon \).
\end{theorem}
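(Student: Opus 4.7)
The plan is to recast the inverse problem as an instance of the coupled system \eqref{eq:sym1} and then invoke Theorem~\ref{thm:1}. Let $u_1, u_2 \in H^1(\Omega)$ denote the solutions of \eqref{eq:ibvp} associated with $f_1$ and $f_2$ for the common Dirichlet datum $\psi$. Because $u_j|_{\partial\Omega}=\psi$ for $j=1,2$, the hypothesis $\Lambda_{f_1}(\psi)|_{\hat\Gamma}=\Lambda_{f_2}(\psi)|_{\hat\Gamma}$ yields exactly the transparent boundary conditions on $\hat\Gamma$: $u_1=u_2=\psi$ and $h\,\partial_\nu u_1 = h\,\partial_\nu u_2$. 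This is the crucial structural observation that bridges the single-equation inverse problem and the coupled-system pattern formation framework.

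Next I would identify the ingredients of Theorem~\ref{thm:1}: take $u:=u_1$, $v:=u_2$, $h_1:=h_2:=h$, and extend the nonlinearities trivially into three variables by setting $f(\boldsymbol{x},z,p):=f_1(\boldsymbol{x},z)$ and $g(\boldsymbol{x},z,p):=f_2(\boldsymbol{x},p)$. With this identification, $(u,v)$ solves \eqref{eq:sym1} in $\Omega$ together with the TBCs on $\hat\Gamma$. I would then verify the a priori hypotheses of Theorem~\ref{thm:1} one by one: \eqref{eq:ap1} follows from the admissibility clause (b) applied to $u_1$ and $u_2$; \eqref{eq:h} is immediate since $h_1-h_2\equiv 0$ makes the $\varepsilon^{\alpha_3}$ estimate vacuous, while $0<M_2\le h\le M_1$ and $h\in C^{2,\alpha_2}(\overline{\mathcal{N}}_\varepsilon)$ are part of the class $\mathcal{M}$; \eqref{eq:fg1} follows because the dummy-variable extension preserves the three-variable $C^{0,\zeta}$-norm of \textbf{Assumption H}.

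Applying Theorem~\ref{thm:1} to $(u,v)$, for every $\boldsymbol{x}\in\mathcal{N}_\varepsilon$ I obtain
\begin{equation*}
\bigl|f_1(\boldsymbol{x},u_1(\boldsymbol{x}))-f_2(\boldsymbol{x},u_2(\boldsymbol{x}))\bigr|
=\bigl|f(\boldsymbol{x},u,v)-g(\boldsymbol{x},u,v)\bigr|\le C(\varepsilon_0,M_1,M_2,C_1,C_2)\,\varepsilon^\tau,
\end{equation*}
which is precisely the estimate \eqref{eq:err}. Since all the analytic substance is already concentrated in Theorem~\ref{thm:1}, no genuine obstacle is anticipated; the only point warranting care is to confirm that extending $f_j(\boldsymbol{x},\cdot)$ by a dummy variable neither alters its Hölder regularity nor inflates the constant $C_2$, which is transparent from the supremum-based definition of the three-variable norm in \textbf{Assumption H}. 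Thus Theorem~\ref{th:ibvp} emerges as a direct corollary of the main pattern-formation theorem, with the matching of data on $\hat\Gamma$ playing the role of the TBCs.
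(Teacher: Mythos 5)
Your proposal is correct and follows essentially the same route as the paper: both recognize that the coincidence of the Cauchy data $(\psi,\Lambda_{f_j}(\psi))|_{\hat\Gamma}$ turns $(u_1,u_2)$ into a solution pair of the coupled system \eqref{eq:sym1} with $h_1=h_2=h$ and the TBCs on $\hat\Gamma$, after which the estimate \eqref{eq:err} is a direct application of Theorem~\ref{thm:1}. Your explicit dummy-variable extension of $f_1,f_2$ to three arguments and the itemized verification of \eqref{eq:ap1}--\eqref{eq:fg1} merely spell out details the paper leaves implicit.
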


Theorem~\ref{th:ibvp} implies that a single passive/active boundary measurement on $\hat{\Gamma}$ can uniquely determine (with $\varepsilon^\tau$-accuracy) the physical process, namely the PDE--$-\nabla\cdot(h\nabla u)-f(\boldsymbol{x}, u)=0$ inside $\mathcal{N}_\varepsilon$. To be clearer, we can present the unique identifiability result for the inverse problem \eqref{eq:S} from a novel operator-theoretic perspective. We define the associated operator $\mathscr{F}$ for the nonlinearity $f(\boldsymbol{x},u)$ as a mapping between Sobolev spaces: \begin{align*}
\mathscr{F} \colon H^1(\Omega) &\to L^2(\Omega), \notag \\
u(\boldsymbol{x}) &\mapsto \mathscr{F}(u)(\boldsymbol{x}) := f(\boldsymbol{x}, u(\boldsymbol{x})), \quad \boldsymbol{x} \in \Omega.
\end{align*}
The inverse problem \eqref{eq:S} can be reformulated as recovering the operator $ \mathscr{F} $ from the boundary measurements $ \big(\psi,\Lambda_{f}(\psi)\big) $ obtained from samples in $ H^1(\Omega) $ on the partial boundary $ \hat{\Gamma} $. That is,
\begin{equation}\label{eq:S2}
\big(\psi,\Lambda_{f}(\psi)\big)\Big|_{\hat{\Gamma}} \longrightarrow \mbox{operator }\, \mathscr{F}.
\end{equation}
In this framework, $ u(\boldsymbol{x}) $ is regarded as a sample from $ H^1(\Omega) $, and $ \mathscr{F}(u)(\boldsymbol{x})= f(\boldsymbol{x}, u(\boldsymbol{x})) $ corresponds to a realization of the operator $ \mathscr{F} $ in $ L^2(\Omega) $.
 
The research perspective described in \eqref{eq:S2} is highly general and offers substantial practical value, representing a transformation in methodological philosophy. This operator-theoretic perspective naturally encompasses the results in Theorem \ref{th:ibvp}. In fact, we can first show that
\begin{theorem}\label{th:operator}
Let \( \Omega = \mathcal{N}_\varepsilon \) be defined as in \eqref{eq:multi}, under the same mathematical framework for the nonlinearities \( f_j \,(j=1,2) \) as in Theorem \ref{th:ibvp}. Let \( \Lambda_{f_j} \) be the DtN maps and \( \mathscr{F}_j \) the associated operators corresponding to \( f_j\). For any fixed admissible \( \psi \in H^{1/2}(\hat{\Gamma}) \) with \( \hat{\Gamma} \subset \partial \Omega \), if it holds that
\begin{equation*}
    \left(\psi,\Lambda_{f_1}(\psi)\right)\big|_{\hat{\Gamma}} = \left(\psi,\Lambda_{f_2}(\psi)\right)\big|_{\hat{\Gamma}},
\end{equation*}
then the following inequalities hold for any \( \boldsymbol{x} \in \Omega \):
\begin{equation*}
|u_1(\boldsymbol{x}) - u_2(\boldsymbol{x})| \leq \widetilde{C}_1(\varepsilon_0,M_1,M_2,C_1,C_2)\varepsilon^{\tau},
\end{equation*}
and
\begin{equation*}
\big|\mathscr{F}_1(u_j)(\boldsymbol{x}) - \mathscr{F}_2(u_j)(\boldsymbol{x}) \big| \leq \widetilde{C}_2(\varepsilon_0,M_1,M_2,C_1,C_2)\varepsilon^{\tau \zeta},\quad j=1,2,
\end{equation*}
where \( u_j\,(j=1,2) \), are solutions to \eqref{eq:ibvp} corresponding to \( f_j \) and satisfying \eqref{eq:ap1}. Here, $\tau \in(0,1)$, and \( \zeta \in(0,1)\). The positive constants \( \widetilde{C}_1(\varepsilon_0,M_1,M_2,C_1,C_2) \) and \( \widetilde{C}_2(\varepsilon_0,M_1,M_2,C_1,C_2) \)  depend only on $\varepsilon_0,M_1,M_2,C_1,C_2$.
\end{theorem}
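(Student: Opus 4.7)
The plan is to reduce Theorem \ref{th:operator} to a direct invocation of Theorem \ref{thm:1} together with an elementary normal-direction Taylor expansion exploiting the thinness of $\mathcal{N}_\varepsilon$. First, I set $h_1 := h_2 := h$ and take the coupled-system nonlinearities in \eqref{eq:sym1} to be $f(\boldsymbol{x}, z, p) := f_1(\boldsymbol{x}, z)$ and $g(\boldsymbol{x}, z, p) := f_2(\boldsymbol{x}, p)$. Then $(u, v) := (u_1, u_2)$ satisfies \eqref{eq:sym1} on $\Omega = \mathcal{N}_\varepsilon$, and the hypothesis $(\psi, \Lambda_{f_1}(\psi))|_{\hat{\Gamma}} = (\psi, \Lambda_{f_2}(\psi))|_{\hat{\Gamma}}$ translates precisely into the TBCs $u_1 = u_2$ and $h \partial_\nu u_1 = h \partial_\nu u_2$ on $\hat{\Gamma}$. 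Since $\|h_1 - h_2\|_{C^{2,\alpha_1}(\partial \mathcal{N}_\varepsilon)} = 0 \leq \varepsilon^{\alpha_3}$ trivially, and the admissible class $\mathcal{M}$ supplies the remaining hypotheses \eqref{eq:ap1}--\eqref{eq:fg1}, Theorem \ref{thm:1} yields
\begin{equation*}
\bigl|f_1(\boldsymbol{x}, u_1(\boldsymbol{x})) - f_2(\boldsymbol{x}, u_2(\boldsymbol{x}))\bigr| \leq C \varepsilon^{\tau}, \qquad \boldsymbol{x} \in \mathcal{N}_\varepsilon,
\end{equation*}
for some $\tau \in (0, 1)$ and $C = C(\varepsilon_0, M_1, M_2, C_1, C_2)$. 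This single estimate is the starting point for both claimed inequalities.

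To control $w := u_1 - u_2$ pointwise, note that the matched Cauchy data gives $w|_{\hat{\Gamma}} = 0$ and, using $h \geq M_2 > 0$, also $\partial_\nu w|_{\hat{\Gamma}} = 0$. The vanishing of $w$ along $\hat{\Gamma}$ forces all its tangential derivatives to vanish, so in fact $\nabla w \equiv 0$ on $\hat{\Gamma}$. For any $\boldsymbol{x} \in \mathcal{N}_\varepsilon$, \textbf{Assumption G} supplies $\boldsymbol{x}_0 \in \hat{\Gamma}$ with $\boldsymbol{x}_0 - \boldsymbol{x} = t\,\boldsymbol{\nu}_{\boldsymbol{x}_0}$ and $t \lesssim \varepsilon$. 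Setting $\phi(s) := w(\boldsymbol{x}_0 - s\,\boldsymbol{\nu}_{\boldsymbol{x}_0})$ on $[0, t]$, one has $\phi(0) = \phi'(0) = 0$, and the $C^{0,\alpha_1}$-continuity of $\nabla w$ (with norm bounded by $2 C_1$) gives $|\phi'(s)| \leq 2 C_1 s^{\alpha_1}$. Integrating on $[0, t]$ yields
\begin{equation*}
|u_1(\boldsymbol{x}) - u_2(\boldsymbol{x})| = |\phi(t)| \leq \frac{2 C_1}{1 + \alpha_1}\,t^{1 + \alpha_1} \leq \widetilde{C}_1 \varepsilon^{1 + \alpha_1},
\end{equation*}
which dominates $\widetilde{C}_1 \varepsilon^{\tau}$ since $\tau < 1 < 1 + \alpha_1$ and $\varepsilon < \varepsilon_0 \leq 1$, establishing the first inequality.

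The second inequality then follows by the triangle inequality and the $C^{0,\zeta}$-continuity of $f_{3-j}$ in its second slot. For $j \in \{1, 2\}$,
\begin{equation*}
\bigl|\mathscr{F}_1(u_j)(\boldsymbol{x}) - \mathscr{F}_2(u_j)(\boldsymbol{x})\bigr| \leq \bigl|f_1(\boldsymbol{x}, u_1) - f_2(\boldsymbol{x}, u_2)\bigr| + \|f_{3-j}\|_{C^{0,\zeta}} \,|u_1(\boldsymbol{x}) - u_2(\boldsymbol{x})|^{\zeta} \leq C \varepsilon^{\tau} + C \varepsilon^{\tau \zeta} \leq \widetilde{C}_2 \varepsilon^{\tau \zeta},
\end{equation*}
by combining the two preceding estimates and using $\tau \zeta < \tau$. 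The main conceptual obstacle is already discharged by Theorem \ref{thm:1}; the only genuinely new technical point is verifying that \textbf{Assumption G} supplies a normal segment of length $\lesssim \varepsilon$ lying entirely in $\overline{\mathcal{N}}_\varepsilon$ along which the Taylor expansion of $w$ is valid, and that the Hölder constant of $\nabla w$ is bounded uniformly in $\varepsilon$ via \eqref{eq:ap1}.
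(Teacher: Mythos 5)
Your proposal is correct, and its overall skeleton matches the paper's: both proofs hinge on the single estimate $|f_1(\boldsymbol{x},u_1(\boldsymbol{x}))-f_2(\boldsymbol{x},u_2(\boldsymbol{x}))|\leq C\varepsilon^{\tau}$ (you obtain it by feeding $(u_1,u_2)$ with $h_1=h_2=h$ into the coupled system \eqref{eq:sym1} and invoking Theorem \ref{thm:1}; the paper cites Theorem \ref{th:ibvp}, whose proof performs exactly that reduction), and both conclude the operator estimate by the same triangle inequality plus the $C^{0,\zeta}$-continuity of $f_{3-j}$ in its second argument, absorbing $\varepsilon^{\tau}$ into $\varepsilon^{\tau\zeta}$. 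The genuine difference is in the first inequality, the pointwise bound on $u_1-u_2$: the paper disposes of it by briefly invoking ``elliptic regularity theory'' together with the matched Cauchy data, whereas you argue directly that $w=u_1-u_2$ has vanishing Dirichlet and Neumann data on $\hat{\Gamma}$, hence $\nabla w=0$ on $\hat{\Gamma}$, and then integrate along the normal segment supplied by \textbf{Assumption G} using the uniform $C^{1,\alpha_1}$ bound, obtaining $|w|\leq \widetilde{C}_1\varepsilon^{1+\alpha_1}\leq \widetilde{C}_1\varepsilon^{\tau}$. This is precisely the mechanism the paper itself uses in its preliminary lemma (the estimate \eqref{eq:w} via \eqref{eq:w exp}--\eqref{eq:deri} and \eqref{eq:xx0}), so your route is more self-contained and in fact sharper than the statement requires, avoiding the vaguely justified elliptic-regularity step; the only technicality, which you correctly flag, is that the Taylor/integration argument needs the normal segment of length $O(\varepsilon)$ to lie in $\overline{\mathcal{N}}_\varepsilon$ (or a $C^{1,\alpha_1}$ extension of $w$), the same implicit point underlying the paper's own expansion.
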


Theorem~\ref{th:operator} demonstrates that any realization of \( \mathscr{F} \) can be approximately reconstructed from a single boundary measurement on $\hat{\Gamma}$. Specifically, the discrepancy between realizations of \(\mathscr{F}_1\) and \(\mathscr{F}_2\) at product-type ends is bounded by \( \varepsilon^{\tau\zeta} \), where \(\varepsilon\) and \(\tau\zeta\) represent geometric and regularity parameters, respectively. Full recovery of the operator \(\mathscr{F}\) itself subsequently requires generating enough samples to obtain multiple realizations for the reconstruction process.

\begin{remark}
Compared to previous approaches to inverse boundary problems involving nonlinear PDEs, the new methodology is a fundamental shift. Physically, the mapping relationship between the two function spaces $H^1(\Omega)$ and $L^2(\Omega)$ corresponds to identifying the physical process within \( \Omega\) through boundary observations. The key innovation here lies in concentrating on the input-output behavior of the operator \( \mathscr{F} \) rather than the explicit functional form of the realizations of \( \mathscr{F} \). Naturally, to reconstruct the explicit expression of the realization \( f(\boldsymbol{x}, u) \), multiple measurements across different samples and appropriate prior information about the realization are required, such as the requirement that \( f(\boldsymbol{x}, u) \) is a polynomial in terms of \( u \).
\end{remark}

\begin{remark}
For the IBVP in Theorem~\ref{th:operator}, we consider a special geometric configuration where \(\mathcal{N}_\varepsilon = \Omega_s \times \Omega_r\) with \(\Omega_s := \Omega_\varepsilon\) and \(\Omega_r := \eta\). In \(\mathbb{R}^3\), \(\Omega_\varepsilon\) constitutes a two-dimensional manifold while \(\eta\) is one-dimensional. Mathematically, this decomposes \(\mathcal{N}_\varepsilon\) into a product of sub-manifolds: \(\Omega_s\) (the hidden dimension) and \(\Omega_r\) (the regular dimension). Our unique identifiability result demonstrates that physical processes occurring within the hidden dimension \(\Omega_s\) in \(\mathcal{N}_\varepsilon\) can be stably recovered from measurement data on \(\partial \Omega_s \times \Omega_r\), which represents the projection onto the regular dimension. This stability originates from the high extrinsic curvature of the hidden dimension, as established in Section~\ref{sub:main results}. Herein, we validate this framework for the semilinear PDE \eqref{eq:ibvp} in \(\mathbb{R}^n\) (\(n = 2, 3\)), with generalizations to broader mathematical and physical contexts deferred to forthcoming work.
\end{remark}

\subsection{Transmission eigenvalues problems} 
Transmission eigenvalue problems are closely linked to inverse scattering theory via unique identifiability, reconstruction algorithms, and invisibility cloaking, the historical developments and contemporary advances of which can be reviewed in \cite{Cako1, CK2, L1}.

In this subsection, we primarily investigate the spectral geometry of transmission eigenfunctions. If there exists a non-trivial pair of solutions $(u,v)$  to the system \eqref{eq:sym1}  such that 
\begin{equation}\label{eq:spe}
h_1=h_2=1,\quad f(\boldsymbol{x},u,v)=\left(\lambda+V\right) u,\quad \mbox{and} \quad g(\boldsymbol{x},u,v)=\lambda v
\end{equation}
for $\lambda :=k^2\in \mathbb{R}_+$ and $V\in L^\infty (\Omega)$,
then the system \eqref{eq:sym1} is classified as the interior transmission eigenvalue problem. In this context,  $u,\,v$ are referred to as the transmission eigenfunctions. Thus, the general coupled system \eqref{eq:sym1} reduces to
\begin{equation}\label{eq:s1}
	\begin{cases}
		-\Delta u-Vu = \lambda u\quad & \mbox{in}\ \ \Omega,\\
		\hspace*{1.0cm} -\Delta v = \lambda v\quad & \mbox{in}\ \ \Omega,\\
		 u=v,\,\, \partial_\nu u=\partial_\nu v & \mbox{on}\ \ \hat{\Gamma}.
	\end{cases}
\end{equation}

 It is important to note that \eqref{eq:s1} comprises the Schr\"{o}dinger equation and the Helmholtz equation. The findings regarding the local pattern formations of the interior transmission problem \eqref{eq:s1} are elucidated in the subsequent theorems, revealing that the transmission eigenfunctions $u$ and $v$ exhibit near-vanishing behavior in region $\overline{\mathcal{N}}_\varepsilon$.
\begin{corollary}\label{Cor:1}
Let \((u,v)\) be a solution to the coupled system \eqref{eq:s1}. Assume there exist positive constants \(C_1\) and a regularity parameter \(\alpha_1 \in (0,1)\), both independent of \(\varepsilon\), such that the solution \((u,v)\) satisfies
\begin{equation}\label{eq:uv2}
u, v \in H^1_{loc}(\mathcal{N}_\varepsilon) \cap C^{1,\alpha_1}(\overline{\mathcal{N}}_\varepsilon) \quad \text{with} \quad \|u\|_{C^{1,\alpha_1}(\overline{\mathcal{N}}_\varepsilon)}, \, \|v\|_{C^{1,\alpha_1}(\overline{\mathcal{N}}_\varepsilon)} \leq C_1,
\end{equation}
where \(\alpha_1 \in \left(\frac{1}{2},1\right)\) in \(\mathbb{R}^2\) and \(\alpha_1 \in \left(\frac{2}{3},1\right)\) in \(\mathbb{R}^3\), and \(\mathcal{N}_{\varepsilon} \subset \Omega\). Moreover, the potential \(V\) satisfies the following conditions:
\begin{equation}\label{eq:ap2}
\|V\|_{C^{0, \alpha}(\overline{\mathcal{N}}_\varepsilon)} \leq V_0 
\end{equation} 
and
\begin{equation}\label{eq:ap3}
|V(\boldsymbol{x})| \geq \epsilon_0, \quad x \in \overline{\mathcal{N}}_\varepsilon,
\end{equation}
where \(\alpha \in (0,1)\), and \(V_0\) and \(\epsilon_0\) are positive constants that do not depend on \(\varepsilon\). Then, there exists a positive constant \(\varepsilon_0\) such that when \(\varepsilon < \varepsilon_0\),

\begin{equation}\label{eq:k1}
|u(\boldsymbol{x})|, |v(\boldsymbol{x})| \leq C(\varepsilon_0, \epsilon_0, k, V_0, C_1) \varepsilon^{\tau_1}, 
\end{equation}
where \(\tau_1 \in (0,1)\) and \(C(\varepsilon_0, \epsilon_0, k, V_0, C_1)\) is a generic positive constant that depends solely on \(\varepsilon_0, \epsilon_0, k, V_0, C_1\).
\end{corollary}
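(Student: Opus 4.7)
The plan is to view the transmission system \eqref{eq:s1} as an instance of the general system \eqref{eq:sym1} and then combine the conclusion of Theorem \ref{thm:1} with the quasi-vanishing of $u-v$ that the TBCs force inside the thin or narrow end. First I would identify the ingredients of \eqref{eq:s1} with those of \eqref{eq:sym1} via $h_1 = h_2 \equiv 1$, $f(\boldsymbol{x},z,p) = (\lambda + V(\boldsymbol{x}))z$, and $g(\boldsymbol{x},z,p) = \lambda p$, and check the a priori conditions. The regularity \eqref{eq:ap1} is exactly \eqref{eq:uv2}; the coefficient condition \eqref{eq:h} is trivial because $h_1 - h_2 \equiv 0$ satisfies any $\varepsilon^{\alpha_3}$-smallness; and after a harmless localization in $(z,p)$ to the disk of radius $C_1$ (which contains all values of $u,v$ on $\overline{\mathcal{N}}_\varepsilon$ by \eqref{eq:uv2}), the Hölder bound \eqref{eq:ap2} on $V$ combined with the linearity of $f$ and $g$ in $(z,p)$ gives \eqref{eq:fg1} with $\zeta := \min(\alpha,\alpha_1)$ and a constant controlled by $V_0$, $\lambda$, and $C_1$. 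Applying Theorem \ref{thm:1} then yields the pointwise estimate
$$\bigl|(\lambda + V(\boldsymbol{x}))u(\boldsymbol{x}) - \lambda v(\boldsymbol{x})\bigr| \leq C\, \varepsilon^{\tau}, \quad \boldsymbol{x} \in \mathcal{N}_\varepsilon.$$

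Next I would independently derive a pointwise bound on $|u-v|$ inside $\mathcal{N}_\varepsilon$ directly from the TBCs and \textbf{Assumption G}. Given $\boldsymbol{x} \in \mathcal{N}_\varepsilon$, \textbf{Assumption G} furnishes $\boldsymbol{x}_0 \in \hat{\Gamma}$ with $\boldsymbol{x} - \boldsymbol{x}_0 = -t\, \boldsymbol{\nu}_{\boldsymbol{x}_0}$ for some $|t| \leq \varepsilon$. A first-order Taylor expansion of $u$ and $v$ at $\boldsymbol{x}_0$ along the normal direction, controlled by the $C^{1,\alpha_1}$-norm, gives
$$u(\boldsymbol{x}) - v(\boldsymbol{x}) = \bigl[u(\boldsymbol{x}_0) - v(\boldsymbol{x}_0)\bigr] - t\bigl[\partial_\nu u(\boldsymbol{x}_0) - \partial_\nu v(\boldsymbol{x}_0)\bigr] + O\bigl(\varepsilon^{1+\alpha_1}\bigr),$$
and the two bracketed terms vanish by the TBCs on $\hat{\Gamma}$, yielding $|u(\boldsymbol{x}) - v(\boldsymbol{x})| \leq C\, \varepsilon^{1+\alpha_1}$. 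Here the collinearity given by \textbf{Assumption G} is essential, since the TBCs only match the normal component of the gradient.

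Finally, combining these two estimates with the non-degeneracy \eqref{eq:ap3} of $V$, I would write
$$|V(\boldsymbol{x})|\,|u(\boldsymbol{x})| = \bigl|(\lambda + V(\boldsymbol{x}))u(\boldsymbol{x}) - \lambda u(\boldsymbol{x})\bigr| \leq \bigl|(\lambda + V)u - \lambda v\bigr| + \lambda\, |u - v| \leq C\, \varepsilon^{\tau} + C'\, \varepsilon^{1+\alpha_1},$$
divide by $\epsilon_0$, and set $\tau_1 := \min(\tau, 1+\alpha_1) \in (0,1)$ to conclude $|u(\boldsymbol{x})| \leq C''\, \varepsilon^{\tau_1}$; the bound on $v$ then follows from $|v(\boldsymbol{x})| \leq |u(\boldsymbol{x})| + |u(\boldsymbol{x}) - v(\boldsymbol{x})|$. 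I expect the main conceptual step to be the verification of the hypotheses of Theorem \ref{thm:1} — particularly the $C^{0,\zeta}$-control of $f$ and $g$ after localization in $(z,p)$ and the tracking of how $V_0$, $\lambda$, $C_1$ enter the constant — while the remaining manipulation is a short algebraic consequence of the non-degeneracy of $V$ and the Taylor-derived smallness of $u-v$.
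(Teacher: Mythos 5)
Your proposal is correct, but it is organized differently from what the paper actually writes down. The paper gestures at your route in Remark \ref{rem:CD} (``Corollaries \ref{Cor:1} and \ref{Cor:2} follow directly from Theorem \ref{thm:1}''), but its written proof in Section \ref{sec:6} does not invoke Theorem \ref{thm:1} as a black box: it re-runs the CGO integral-identity machinery for the specific linear right-hand sides, splitting $f-g$ so that the main term is $\varphi(\boldsymbol{x}_0)v(\boldsymbol{x}_0)\int_{D_\varepsilon}e^{\rho\cdot\boldsymbol{x}}\,\rmd\boldsymbol{x}$ (see \eqref{eq:qv} and Lemma \ref{lem:I2}), lower-bounds that term, and absorbs the remainder, obtaining a bound on $|v(\boldsymbol{x}_0)|$ with an explicit exponent in terms of $\alpha,\alpha_1$; the bound on $u$ then follows from $u=v+w$ and \eqref{eq:w} (the proof of Corollary \ref{Cor:1} itself is declared analogous and omitted). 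You instead take the conclusion of Theorem \ref{thm:1}, $|(\lambda+V)u-\lambda v|\le C\varepsilon^\tau$, combine it with the pointwise estimate $|u-v|\le C\varepsilon^{1+\alpha_1}$ — which is exactly the paper's estimate \eqref{eq:w}, proved the same way you do, via Taylor expansion at the foot point supplied by \textbf{Assumption G} — and close with the algebraic identity $Vu=\bigl[(\lambda+V)u-\lambda v\bigr]-\lambda(u-v)$ together with $|V|\ge\epsilon_0$. This modular reduction is shorter and makes the role of the non-degeneracy \eqref{eq:ap3} transparent; what the paper's direct CGO rerun buys is an explicit rate tailored to the linear structure and the avoidance of the fact that $f(\boldsymbol{x},z,p)=(\lambda+V)z$ is not literally in $C^{0,\zeta}(\overline{\mathcal{N}}_\varepsilon\times\mathbb{C}\times\mathbb{C})$ — a point you correctly patch by localizing $(z,p)$ to the disk of radius $C_1$, which is all that the proof of Theorem \ref{thm:1} uses through \eqref{eq:F1}.

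Two minor remarks. First, your justification of \textbf{Assumption G} is slightly off: since $u=v$ on all of $\hat\Gamma$, the tangential derivatives of $w=u-v$ also match, so the full gradient of $w$ vanishes on $\hat\Gamma$ (this is \eqref{eq:deri} in the paper); what \textbf{Assumption G} really provides is a boundary point $\boldsymbol{x}_0\in\hat\Gamma$ with $|\boldsymbol{x}-\boldsymbol{x}_0|\le\varepsilon$, i.e.\ \eqref{eq:xx0}. Your collinearity argument is still valid, it just isn't the reason the assumption is needed. Second, $\min(\tau,1+\alpha_1)=\tau$ since $\tau\in(0,1)$, so $\tau_1=\tau$; and the extension of \eqref{eq:k1} from $\mathcal{N}_\varepsilon$ to $\overline{\mathcal{N}}_\varepsilon$ follows by continuity of $u,v$. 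Neither affects the validity of the argument.
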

Corollary \ref{Cor:1} applies to all wave numbers $k\in\mathbb{R}_+$. Moreover, the results remain valid when $V$ is substituted with $k^2(q(x)-1)$, where $q\in C^{0,\alpha}(\overline{\mathcal{N}}_\varepsilon)$ has a discontinuity on the lateral boundary $\hat{\Gamma}$ of $\mathcal{N}_\varepsilon$. In this context, the Schr\"{o}dinger equation in \eqref{eq:s1} transforms into the Helmholtz equation represented by
	$$\Delta u+k^2q u=0,$$ and the coupled system described by \eqref{eq:s1} becomes
	\begin{equation}\label{eq:h2}
		\left\{\begin{aligned}
			\Delta u+k^2 q u & =0 & & \text { in } \Omega, \\
			\Delta v+k^2 v & =0 & & \text { in } \Omega, \\
			 u=v,\,\, \partial_\nu& u=\partial_\nu v & & \text { on } \hat{\Gamma}. 
		\end{aligned}\right.
	\end{equation}
The following corollary offers a more comprehensive description.

\begin{corollary}\label{Cor:2}
Let \( (u,v) \) be a solution to the system \eqref{eq:h2}. Suppose the solution \( (u,v) \) satisfies the priori condition \eqref{eq:uv2} and that the coefficient \( q \) satisfies 
\begin{equation}\label{eq:q}
q \in C^{0,\alpha}(\overline{\mathcal{N}_{\varepsilon}}) \quad \text{and} \quad |q - 1| \geq \epsilon_0 \ \ \text{on} \ \ \overline{\Omega},
\end{equation}
where $\alpha\in (0,1)$ and $\epsilon_0$ is a positive number. 
Then, it follows that
\begin{equation}\label{eq:kk2}
|u(\boldsymbol{x})|, |v(\boldsymbol{x})| \leq C(\varepsilon_0,\epsilon_0, k, V_0,C_1) \varepsilon^{\tau_2} \quad \text{for any} \quad \boldsymbol{x} \in \overline{\mathcal{N}}_\varepsilon,
\end{equation}
where \( \tau_2 \in (0,1) \) and \( C(\varepsilon_0,\epsilon_0, k, V_0,C_1) \) is a generic positive constant that depends solely on \( \varepsilon_0, \epsilon_0, k, V_0, C_1 \).
\end{corollary}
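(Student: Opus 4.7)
The plan is to recognize the system \eqref{eq:h2} as an instance of the general coupled system \eqref{eq:sym1} with the specific choices $h_1 \equiv h_2 \equiv 1$, $f(\boldsymbol{x},u,v) := k^2 q(\boldsymbol{x})u$, and $g(\boldsymbol{x},u,v) := k^2 v$, and then to invoke Theorem~\ref{thm:1}. First I would verify the hypotheses of Theorem~\ref{thm:1}: the regularity \eqref{eq:ap1} is precisely \eqref{eq:uv2}; since $h_1 = h_2 = 1$, the condition \eqref{eq:h} holds trivially (the Hölder norm of $h_1 - h_2$ is identically zero, so the exponent $\alpha_3$ can be taken arbitrarily large); and for \eqref{eq:fg1}, the products $k^2 qu$ and $k^2 v$ lie in $C^{0,\zeta}(\overline{\mathcal{N}}_\varepsilon\times\mathbb{C}\times\mathbb{C})$ with $\zeta := \min(\alpha,\alpha_1)$, because $q\in C^{0,\alpha}(\overline{\mathcal{N}}_\varepsilon)$ by \eqref{eq:q} and $u,v\in C^{1,\alpha_1}(\overline{\mathcal{N}}_\varepsilon)\hookrightarrow C^{0,\alpha_1}$, with uniform bounds independent of $\varepsilon$. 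Consequently Theorem~\ref{thm:1} yields, for any $\boldsymbol{x}\in\mathcal{N}_\varepsilon$ and $\varepsilon$ sufficiently small,
\begin{equation*}
k^2\bigl|q(\boldsymbol{x})u(\boldsymbol{x}) - v(\boldsymbol{x})\bigr| = \bigl|f(\boldsymbol{x},u,v) - g(\boldsymbol{x},u,v)\bigr| \leq C\,\varepsilon^{\tau}
\end{equation*}
for some $\tau\in(0,1)$ determined by $\alpha_1$ and $\zeta$.

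Next I would derive an independent pointwise comparison $|u(\boldsymbol{x}) - v(\boldsymbol{x})|\leq C\varepsilon^{1+\alpha_1}$ from the TBCs and \textbf{Assumption G}. Given $\boldsymbol{x}\in\mathcal{N}_\varepsilon$, pick $\boldsymbol{x}_0\in\hat{\Gamma}$ with $\boldsymbol{x}_0 - \boldsymbol{x} = c\,\boldsymbol{\nu}_{\boldsymbol{x}_0}$, $|c|\lesssim\varepsilon$. Taylor expanding $u$ and $v$ at $\boldsymbol{x}_0$ using the $C^{1,\alpha_1}$ regularity \eqref{eq:uv2} gives
\begin{equation*}
u(\boldsymbol{x}) - v(\boldsymbol{x}) = \bigl(u(\boldsymbol{x}_0)-v(\boldsymbol{x}_0)\bigr) + \bigl(\nabla u(\boldsymbol{x}_0) - \nabla v(\boldsymbol{x}_0)\bigr)\cdot(\boldsymbol{x}-\boldsymbol{x}_0) + O\!\bigl(\varepsilon^{1+\alpha_1}\bigr).
\end{equation*}
The first term vanishes by $u = v$ on $\hat\Gamma$; the second equals $-c\,(\partial_\nu u - \partial_\nu v)(\boldsymbol{x}_0)$, which vanishes because $h_1 = h_2 = 1$ implies $\partial_\nu u = \partial_\nu v$ on $\hat\Gamma$. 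Hence $|u(\boldsymbol{x})-v(\boldsymbol{x})|\leq C\varepsilon^{1+\alpha_1}$ pointwise in $\mathcal{N}_\varepsilon$.

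Combining the two estimates via the identity $q u - v = (q-1)u + (u-v)$ yields
\begin{equation*}
\bigl|(q(\boldsymbol{x})-1)u(\boldsymbol{x})\bigr| \leq |q(\boldsymbol{x})u(\boldsymbol{x}) - v(\boldsymbol{x})| + |u(\boldsymbol{x})-v(\boldsymbol{x})| \leq C\bigl(\varepsilon^{\tau} + \varepsilon^{1+\alpha_1}\bigr) \leq C\,\varepsilon^{\tau_2},
\end{equation*}
where $\tau_2 := \min(\tau,\,1+\alpha_1)\in(0,1)$. The lower bound $|q-1|\geq \epsilon_0$ from \eqref{eq:q} then gives $|u(\boldsymbol{x})|\leq C\,\epsilon_0^{-1}\varepsilon^{\tau_2}$, and $|v(\boldsymbol{x})|\leq |u(\boldsymbol{x})| + |u(\boldsymbol{x})-v(\boldsymbol{x})|\leq C\,\varepsilon^{\tau_2}$, which is \eqref{eq:kk2}.

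The main obstacle is entirely encapsulated in Theorem~\ref{thm:1}; granted that deep nonlinear estimate, the remaining work is essentially algebraic. The only subtle bookkeeping issue is to ensure the Hölder exponent $\zeta = \min(\alpha,\alpha_1)$ used when applying Theorem~\ref{thm:1} is admissible (strictly positive and less than one), and that the resulting $\tau$ from Theorem~\ref{thm:1} (given by \eqref{eq:tau2}/\eqref{eq:tau3}) remains strictly positive; this is automatic from the ranges of $\alpha_1$ and $\alpha$ assumed in \eqref{eq:uv2} and \eqref{eq:q}. One should also verify that the constant $C$ depends only on the listed parameters $\varepsilon_0,\epsilon_0,k,V_0,C_1$ by tracking the dependencies through the application of Theorem~\ref{thm:1} and through the $\epsilon_0^{-1}$ factor arising from the division by $|q-1|$.
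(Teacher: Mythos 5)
Your proposal is essentially correct, but it takes a genuinely different route from the paper. You invoke Theorem \ref{thm:1} as a black box to get $k^2|q(\boldsymbol{x})u(\boldsymbol{x})-v(\boldsymbol{x})|\le C\varepsilon^{\tau}$, combine it with the pointwise estimate $|u-v|\le C\varepsilon^{1+\alpha_1}$ (which is exactly the paper's \eqref{eq:w}, proved the same way via Taylor expansion, the TBCs and \textbf{Assumption G}), and then finish by the algebra $(q-1)u=(qu-v)-(u-v)$ together with the lower bound $|q-1|\ge\epsilon_0$ from \eqref{eq:q}. The paper instead does not cite Theorem \ref{thm:1} verbatim: in Section \ref{sec:6} it re-runs the CGO argument for the linear system \eqref{eq:h2}, using the decomposition $f-g=k^2(q-1)v+k^2qw$ with $w=u-v$, the integral identity \eqref{eq:integral}, the estimates $I_1,\dots,I_6$ of Lemma \ref{lem:I2} (and their 3D analogues), and reads off $|v(\boldsymbol{x}_0)|$ directly from the leading term $I_5=\varphi(\boldsymbol{x}_0)v(\boldsymbol{x}_0)\int_{D_\varepsilon}e^{\rho\cdot\boldsymbol{x}}\,\rmd\boldsymbol{x}$, treating $u$ analogously via Proposition \ref{prop:1}; the singularity $|q-1|\ge\epsilon_0$ enters there through division by $|\varphi(\boldsymbol{x}_0)|$, where you use it in the final pointwise division. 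Your route is shorter and makes the role of the physical singularity transparent, but it has one caveat worth stating explicitly: the literal hypothesis \eqref{eq:fg1} fails for $f(\boldsymbol{x},z,p)=k^2q(\boldsymbol{x})z$ and $g(\boldsymbol{x},z,p)=k^2p$, since these are unbounded in $(z,p)$ and hence have infinite $C^{0,\zeta}(\overline{\mathcal{N}}_\varepsilon\times\mathbb{C}\times\mathbb{C})$ norm; one must observe that the proof of Theorem \ref{thm:1} only uses Hölder bounds of $f,g$ along the bounded range of $(u,v)$ (the paper's Remark \ref{rem:CD} makes the same informal identification, but its actual proof sidesteps the issue by exploiting linearity, which also keeps $\alpha$ and $\alpha_1$ separate in $\tau_2$ rather than passing through your $\zeta=\min(\alpha,\alpha_1)$, giving a slightly sharper exponent). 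Finally, positivity of the resulting exponent is not quite ``automatic'' for all $\alpha\in(0,1)$ and $\alpha_1$ in the ranges of \eqref{eq:uv2} with the fixed parameter choices, but this looseness is shared with the paper's own derivation and does not single out your argument.
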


\begin{remark}\label{rem:CD}
The coupled systems \eqref{eq:s1} and \eqref{eq:h2} are special cases of the general system \eqref{eq:sym1} when the functions \( h_i \) (\( i=1,2 \)), \( f \), and \( g \) are chosen as specified in \eqref{eq:spe}. Consequently, Corollaries \ref{Cor:1} and \ref{Cor:2} follow directly from Theorem \ref{thm:1}. The assumptions \eqref{eq:ap1} to \eqref{eq:fg1} in Theorem \ref{thm:1} are reformulated as \eqref{eq:uv2}, \eqref{eq:ap2}, \eqref{eq:ap3}, and \eqref{eq:q} for these coupled systems. 

Furthermore, \eqref{eq:s1} and \eqref{eq:h2} arise naturally in inverse scattering problems for quantum and acoustic systems, as described by \eqref{eym:Sch}, particularly when studying invisible medium scatterers (see Subsection \ref{sub:visi} for details). In the scattering system \eqref{eym:Sch}, the total wave field is given by \( w = w^i + w^s \), where \( w^i \) denotes the incident wave and \( w^s \) the scattered wave in the medium scatterer \( \Omega \). When \( \Omega \) has product-type ends and is invisible, the systems \eqref{eq:s1} or \eqref{eq:h2} emerge in two distinct scenarios: quantum scattering or acoustic scattering. It follows that \( u = w \) and \( v = w^i \) form a solution to either \eqref{eq:s1} or \eqref{eq:h2}. We emphasize that the assumptions \eqref{eq:ap2}, \eqref{eq:ap3}, and \eqref{eq:q} are generic conditions related to the physical parameters \( V \) and \( q \) in \eqref{eq:s1} and \eqref{eq:h2}, which represent potential and refractive index, respectively, in the quantum and acoustic scattering system \eqref{eym:Sch}. These assumptions hold a priori when the invisible medium scatterer \( \Omega \) satisfies the corresponding physical configurations. In Appendix \ref{sec:apB}, for illustration,  we rigorously demonstrate that assumption \eqref{eq:uv2} holds under \eqref{eq:q} for acoustic medium scattering in \eqref{eym:Sch}. Thus, the assumptions \eqref{eq:ap1} to \eqref{eq:fg1} in Theorem \ref{thm:1} are satisfied for the coupled systems \eqref{eq:s1} and \eqref{eq:h2} arising from invisible quantum or acoustic scattering systems \eqref{eym:Sch}, confirming their generic and broadly applicable nature.

\end{remark}

\begin{remark}
    It is important to emphasize that the physical configurations \(V\) and \(q\) differ from the corresponding physical configuration of the homogeneous background; See \eqref{eq:ap3} and \eqref{eq:q} for details. These differences are termed singularities in the physical configurations, which are crucial for establishing the near-vanishing properties described in \eqref{eq:k1} and \eqref{eq:kk2} in Corollaries \ref{Cor:1} and \ref{Cor:2}, respectively. Specifically, \eqref{eq:k1} and \eqref{eq:kk2} describe the near-vanishing of the corresponding wave fields in quantum scattering or acoustic medium scattering at the thin or narrow ends. Generally, existing results in the literature on invisibility in inverse scattering have been established under the condition that the medium scatterer exhibits geometric singularities, such as corner points or high-curvature points on the boundary, while also demonstrating singularities in the physical configurations. However, Corollaries \ref{Cor:1} and \ref{Cor:2} indicate that the near-vanishing of the eigenfunctions occurs not only at the boundaries of the domains but also within their interiors. These findings suggest that the geometric singularity of the domain is not essential; rather, the singularity of the physical configuration in the coupled system is critical, resulting in properties that are either vanishing or nearly vanishing.
\end{remark}

\subsection{Invisibility and inverse shape problems in wave scattering}\label{sub:visi}
In this subsection, we focus on the quantum and wave scattering theory. Let $\Omega\subset\mathbb{R}^n\,(n=2,3)$ be a bounded Lipschitz domain with connected complement $\mathbb{R}^n \backslash \bar{\Omega}$. In the physical context, the scattering of an incident field denoted by $w^i$ by the inhomogeneous medium $\Omega$ can indeed describe both quantum scattering and acoustic scattering, as given by
\begin{equation}\label{eym:Sch}
\begin{cases}
\hat{H} w-k^2w=0 &\mbox{in}\quad \mathbb{R}^n,\\
w =w^i+w^s &\mbox{in}\quad \mathbb{R}^n,\\
\lim\limits_{r\rightarrow \infty}r^{\frac{n-1}{2}}(\partial_r-\mathrm{i}k)w^s=0,\ & r=|\boldsymbol{x}|,
\end{cases}
\end{equation}
where \(k\) is interpreted as the wave number,  \(w\) and \(w^s\) are referred to as the total field and the scattered field, respectively. In quantum scattering, the operator
\begin{equation*} 
	\hat{H}:=-\Delta-V
	\end{equation*} is known as Schrh\"{o}dinger operator, where the potential $V$ is a complex function that is extended to zero outside the scatterer $(\Omega; V)$ and $k^2$ denotes the energy level (cf.\cite{E2011, Grif, Z2022}). In contrast, in acoustic medium scattering,  the operator is given by
\begin{equation*}
	 \hat{H}:=-\Delta-k^2(q-1),
	\end{equation*}
 where $q$ denotes a refractive index  with $\mathrm{supp}(q-1)\subset \Omega$. In fact, $V$ and $q$ (both in $L^{\infty}(\Omega)$) are referred to as physical configurations in different scenarios, each carrying distinct physical significance. The last limit is known as the Sommerfeld radiation condition, which holds uniformly in the angular variable $\hat{\boldsymbol{x}}:=\boldsymbol{x}/r\in\mathbb{S}^{n-1}$. The scattered field $w^s$ has the following asymptotic expansion as $r\rightarrow +\infty$:
\begin{equation*}
w^s(\boldsymbol{x})=\frac{e^{\mathrm{i}k r}}{r^{(n-1)/2}} w_\infty({\hat{\boldsymbol{x}}})+\mathcal{O}(r^{-(n+1)/2}),
\end{equation*}
where $w_\infty({\hat{\boldsymbol{x}}})$ is known as the far-field pattern. The well-posedness of the direct problem of system \eqref{eym:Sch} can be found in \cite{CK2017,E2011,Mclean2000}. Associated with \eqref{eym:Sch}, we consider the following geometrical inverse problem
\begin{equation}\label{eq:far1}
\mathcal{F}\big((\Omega;V),\,w^i\big)= w_\infty,
\end{equation}
where the operator $\mathcal{F}$ is nonlinear. A notable case for \eqref{eq:far1} occurs when $w_\infty=0$. In this scenario, the scatterer $(\Omega;V)$ does not produce any scattering information detectable from outside, making it effectively invisible or transparent with respect to the probing wave $w^i$. Notably, if $w_\infty\equiv0$, it follows from Rellich’s lemma \cite{CK2017} that $w=w^i$ in $\mathbb{R}^n\backslash\overline{\Omega}$. Thus, we obtain the interior transmission eigenvalue problem given by \eqref{eq:s1} or \eqref{eq:h2} for $u=w|_\Omega$ and $v=w^i|_{\Omega}$. 

In this subsection, we focus on the visibility and unique identifiability of the scatterer $\Omega$ with thin or narrow ends. Firstly, we demonstrate that a scatterer with thin or narrow ends will scatter any incident wave. Secondly, we establish the unique shape identifiability of the inhomogeneous scatterer through a single far-field measurement. This measurement involves collecting far-field data across all observation directions for a fixed incident frequency, rather than relying on multiple far-field measurements. The challenge of determining shape from a single far-field measurement, known as Schiffer's problem, has a rich history in inverse scattering problems (see \cite{CK2017, CK2} and the references therein). To date, uniqueness results derived from a single far-field measurement are valid only when certain a priori information about the scatterer's shape and geometric parameters is available. For further developments on unique identifiability in scattering problems related to conductive acoustic, elastic, and electromagnetic media using a single far-field measurement, refer to \cite{BL2019, BLX2021,  DCL2021, DLS2021}. Specifically, in Theorem \ref{thm:visible}, we shall show that a scatterer with one or more thin or narrow ends is indeed visible. Furthermore, we demonstrate that when the scatterer $\Omega$ belongs to a particular admissible class $\mathcal{A}$, Theorem \ref{thm:iden} states a local uniqueness in determining the scatterer from a single measurement. 

\begin{theorem}\label{thm:visible}
    Consider the scattering problem described by \eqref{eym:Sch}. Let \(\Omega\) be the scatterer characterized by the physical configurations $V$ or $q$, which fulfill the assumptions  \eqref{eq:ap2},  \eqref{eq:ap3} and \eqref{eq:q}, respectively. If \(\Omega\) has one or more thin or narrow ends and the total wave satisfies the local regularity condition \eqref{eq:uv2} in the product-type ends, then \(\Omega\) will always scatter any incident wave.
\end{theorem}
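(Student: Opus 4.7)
The plan is to argue by contradiction: assume that there exists an incident wave $w^i$ for which the scatterer $\Omega$ produces no far-field, i.e., $w_\infty \equiv 0$, and derive a contradiction with the near-vanishing estimates of Corollaries \ref{Cor:1} and \ref{Cor:2}. Specifically, invisibility should force the incident wave itself to nearly vanish inside the product-type end $\mathcal{N}_\varepsilon \subset \Omega$, but any non-trivial entire Helmholtz solution (e.g.\ a plane wave $w^i=e^{\mathrm{i}k\boldsymbol{x}\cdot\boldsymbol{d}}$ with $|w^i|\equiv 1$, or any Herglotz/point-source field bounded below on the compact set $\overline{\mathcal{N}}_\varepsilon$) has $\inf_{\mathcal{N}_\varepsilon}|w^i|\geq c_0>0$ with $c_0$ independent of $\varepsilon$. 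This yields the desired contradiction once $\varepsilon$ is small enough.

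The key steps are as follows. First, I would invoke Rellich's lemma together with unique continuation (cf.\ \cite{CK2017}) to upgrade $w_\infty\equiv 0$ to $w^s\equiv 0$ in $\mathbb{R}^n\setminus\overline{\Omega}$, so that the total field satisfies $w=w^i$ in the exterior. Second, using the $H^1_{loc}$-regularity of $w$ and the fact that the exterior field matches $w^i$ across $\hat{\Gamma}\subset\partial\Omega$, the traces and conormal traces yield the transmission relations
\begin{equation*}
u=v, \qquad \partial_\nu u=\partial_\nu v \quad \text{on } \hat{\Gamma},
\end{equation*}
with $u:=w|_{\mathcal{N}_\varepsilon}$ and $v:=w^i|_{\mathcal{N}_\varepsilon}$. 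Since $w^i$ is an entire solution of $(\Delta+k^2)w^i=0$ and $w$ solves $\hat Hw-k^2w=0$ in $\Omega$, the pair $(u,v)$ then solves the coupled interior transmission system \eqref{eq:s1} in the quantum case (with potential $V$ and $\lambda=k^2$) or \eqref{eq:h2} in the acoustic case (with refractive index $q$). Third, the hypotheses \eqref{eq:uv2}, \eqref{eq:ap2}--\eqref{eq:ap3}, and \eqref{eq:q} are precisely those of Corollaries \ref{Cor:1} and \ref{Cor:2}, so that for all sufficiently small $\varepsilon$
\begin{equation*}
|v(\boldsymbol{x})| \leq C\varepsilon^{\tau_j} \quad \text{for all } \boldsymbol{x}\in\overline{\mathcal{N}}_\varepsilon,\qquad j=1,2,
\end{equation*}
with $\tau_j\in(0,1)$ and $C$ independent of $\varepsilon$. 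Choosing $\varepsilon$ small enough that $C\varepsilon^{\tau_j}<c_0$ contradicts $|v|=|w^i|\geq c_0$ on $\mathcal{N}_\varepsilon$, completing the argument.

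The main obstacle I anticipate is step two, namely verifying rigorously that reducing to an interior transmission eigenvalue system is legitimate and that its a priori hypotheses are met. The transmission relations follow from the $H^1$-matching of $w$ and $w^i$ on $\hat{\Gamma}$, but one must be careful that $\hat{\Gamma}$ as in \eqref{eq:lateral} indeed belongs to $\partial\Omega$ (so the exterior equation gives us $w=w^i$ right up to $\hat{\Gamma}$), which is built into the geometric setup of attaching $\mathcal{N}_\varepsilon$ to $\Omega$. A second subtle point is the local $C^{1,\alpha_1}$-regularity \eqref{eq:uv2} of the total field in the product-type ends; this is assumed in the statement, and for the acoustic case it is independently justified in Appendix~\ref{sec:apB} under \eqref{eq:q}, while in the quantum case it follows from standard elliptic regularity combined with \eqref{eq:ap2}. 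Once these preparatory items are in place, the contradiction is immediate from Corollaries~\ref{Cor:1}--\ref{Cor:2}.
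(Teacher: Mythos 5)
Your proposal is correct and follows essentially the same route as the paper's proof: argue by contradiction, use Rellich's lemma and unique continuation to reduce to the interior transmission system \eqref{eq:s1} or \eqref{eq:h2} on $\mathcal{N}_\varepsilon$ with the transmission conditions on $\hat{\Gamma}$, and then let the near-vanishing estimates of Corollaries \ref{Cor:1}--\ref{Cor:2} contradict the $\varepsilon$-independent positive lower bound of $|w^i|$ on the product-type end. The paper writes out only the quantum case (the acoustic case being analogous) and likewise restricts to incident fields with a positive lower bound (plane waves, suitable point sources), so the substance of your argument matches the paper's.
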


Moving forward, we present the definitions of an admissible set $\mathcal{A}$ for the scatterer $\Omega$.

\begin{definition}\label{def:ad}({\bf  Admissible set $\mathcal{A}$})
	Let \(\Omega \subset \mathbb{R}^n\,(n=2,3)\), be a bounded, simply-connected Lipschitz domain that contains one or more thin or narrow ends $\mathcal{N}_{\varepsilon}$ specified in \eqref{eq:multi}. Assume there are \(M\) thin or narrow ends \(\mathcal{N}_{\varepsilon}^{l}\) corresponding to \(M\) geometric parameters \(\varepsilon_{l}\). Define \(\varepsilon_{\max}\) as the uniform upper bound of these \(M\) geometric parameters \(\varepsilon_{l}\), specifically, \(\varepsilon_{\max} = \max\{\varepsilon_l\,|\, l=1,\dots,M\}\). Consider the scattering problem \eqref{eym:Sch} when \(V\) given by \eqref{eq:ap3}, or when \(V\) is replaced by \(V(x) = k^2(q(x) - 1)\) as specified in \eqref{eq:q}. Then, the scatterer \(\Omega\) is said to be admissible if, for any \(\boldsymbol{x} \in \mathbb{R}^n\), the total wave field  $w \in H^1(\mathcal{N}_\varepsilon) \cap C^{1,\alpha_1}(\overline{\mathcal{N}}_\varepsilon) $, where \(\alpha_1 \in (1/2,1)\) in \(\mathbb{R}^2\) and \(\alpha_1 \in (2/3,1)\) in \(\mathbb{R}^3\), and satisfies 
\begin{equation}\label{eq:max}
    |w(\boldsymbol{x})| > \Oh(\varepsilon_{\max}^{\tilde{\tau}} )\quad \text{with} \quad \varepsilon_{\max} \ll 1.
\end{equation}
Here, 
$\tilde{\tau}:=\min \{\tau_1,\tau_2\}$, where $\tau_1$ and $\tau_2$ are regularity exponents defined in \eqref{eq:k1} and \eqref{eq:kk2}, respectively.
\end{definition}

\begin{remark}
   The admissibility assumption given by \eqref{eq:max} can be satisfied in various physical scenarios. Consider a scenario where \(k \cdot \operatorname{diam}(\Omega) \ll 1\), with \(k\) representing the wave number and \(\operatorname{diam}(\Omega)\) denoting the size of $\Omega$. In this context, the impact of the scatterer on the incident wave can be considered negligible, implying that the scattered wave \(w^s\) is sufficiently small in comparison to the incident wave \(w^i\). Therefore, if the incident wave \(w^i\) cannot be reduced to an extremely small value (e.g., when \(w^i\) is a plane wave), the total wave \(w = w^i + w^s\) should retain a non-negligible magnitude across the entire domain. Furthermore, the local regularity conditions of the total wave field in product-type end $\mathcal{N}_{\varepsilon}$ can generally be satisfied in physical scenarios. We demonstrate that these conditions hold in Appendix \ref{sec:apB}.

\end{remark}
\begin{theorem} \label{thm:iden}
    Consider the scattering problem \eqref{eym:Sch} with the potential \( V \) satisfying assumptions \eqref{eq:ap2} and \eqref{eq:ap3}, or with \( V \) replaced by \( V(x) = k^2 (q(x) - 1) \), where the refractive index \( q(x) \) satisfies assumption \eqref{eq:q}. These conditions ensure the well-posedness of the scattering problem for both quantum and acoustic systems, as discussed in Subsection \ref{sub:visi}. Let \(\Omega ,\,\widetilde{\Omega} \subset \mathbb{R}^n\, (n=2,3)\) be two admissible scatterers. For a fixed incident wave \(w^i = e^{\mathrm{i}k\boldsymbol{x} \cdot \boldsymbol{d}}\), assume that \(w_{\infty}\) and \(\widetilde{w}_{\infty}\) are the corresponding far-field patterns of \(\Omega\) and \(\widetilde{\Omega}\), respectively. If
    \begin{equation}\label{eq:far}
        w_{\infty}(\hat{\boldsymbol{x}}) = \widetilde{w}_{\infty}(\hat{\boldsymbol{x}}), \quad \hat{\boldsymbol{x}} \in \mathbb{S}^{n-1},
    \end{equation}
    then \(\Omega\Delta\widetilde{\Omega}:= (\Omega\setminus\widetilde{\Omega}) \cup (\widetilde{\Omega} \setminus\Omega)\) must not possess one or more thin or narrow ends $\mathcal{N}_{\varepsilon} \subset\Omega\Delta\widetilde{\Omega}$ with the property that for every point $\boldsymbol {x}'$ on the lateral boundary of $\mathcal{N}_{\varepsilon}$, there exists an unbounded path $\Upsilon \subset \mathbb{R}^n\setminus(\Omega\cup \widetilde{\Omega})$ connecting $\boldsymbol {x}'$ to infinity.
\end{theorem}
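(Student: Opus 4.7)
The plan is to argue by contradiction, reducing the statement to the near-vanishing bounds in Corollaries \ref{Cor:1}--\ref{Cor:2} and then invoking the admissibility lower bound \eqref{eq:max}. Suppose for contradiction that $\Omega\Delta\widetilde{\Omega}$ contains a product-type end $\mathcal{N}_\varepsilon$ with the stated path-connectivity property. By symmetry I may assume $\mathcal{N}_\varepsilon\subset\Omega\setminus\widetilde{\Omega}$; the opposite case is handled by swapping the roles of $\Omega$ and $\widetilde{\Omega}$.

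First, since $w_\infty=\widetilde{w}_\infty$ on $\mathbb{S}^{n-1}$, Rellich's lemma combined with unique continuation for the Helmholtz equation in the exterior gives $w^s=\widetilde{w}^s$, and hence $w=\widetilde{w}$, on the unbounded connected component $G$ of $\mathbb{R}^n\setminus\overline{\Omega\cup\widetilde{\Omega}}$. The hypothesis that every $\boldsymbol{x}'\in\hat{\Gamma}$ (the lateral boundary of $\mathcal{N}_\varepsilon$) is connected to infinity by a path in $\mathbb{R}^n\setminus(\Omega\cup\widetilde{\Omega})$ forces $\hat{\Gamma}\subset\partial G$, so the identity $w=\widetilde{w}$ extends continuously up to $\hat{\Gamma}$ from the outside. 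Inside $\mathcal{N}_\varepsilon\subset\Omega$, the total field $w$ satisfies the medium equation (Schr\"odinger in quantum scattering, or Helmholtz with refractive index $q$ in acoustic scattering), while $\widetilde{w}$ obeys the homogeneous Helmholtz equation in a neighborhood of $\overline{\mathcal{N}}_\varepsilon$ because $\widetilde{\Omega}\cap\mathcal{N}_\varepsilon=\emptyset$. Combining the exterior matching with the $H^1$-transmission conditions for $w$ across $\partial\Omega\supset\hat{\Gamma}$, one obtains
\[
w|_{\hat{\Gamma}}=\widetilde{w}|_{\hat{\Gamma}},\qquad \partial_\nu w|_{\hat{\Gamma}}=\partial_\nu \widetilde{w}|_{\hat{\Gamma}}.
\]
Thus $(u,v):=(w|_{\mathcal{N}_\varepsilon},\widetilde{w}|_{\mathcal{N}_\varepsilon})$ is a non-trivial solution of the interior transmission system \eqref{eq:s1} (quantum case) or \eqref{eq:h2} (acoustic case).

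At this stage the regularity hypothesis \eqref{eq:uv2} is at our disposal: admissibility of $\Omega$ delivers it for $u=w$, while interior elliptic regularity for the homogeneous Helmholtz equation supplies it for $v=\widetilde{w}$ on $\overline{\mathcal{N}}_\varepsilon$. The physical-configuration assumptions \eqref{eq:ap2}--\eqref{eq:ap3} (or \eqref{eq:q}) are built into the definition of $\Omega$. Hence Corollary~\ref{Cor:1} or Corollary~\ref{Cor:2} applies, yielding $|w(\boldsymbol{x})|=|u(\boldsymbol{x})|\leq C\,\varepsilon^{\tilde{\tau}}$ on $\overline{\mathcal{N}}_\varepsilon$, with $\tilde{\tau}=\min\{\tau_1,\tau_2\}$. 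Since $\varepsilon\leq\varepsilon_{\max}\ll 1$, this contradicts the admissibility lower bound $|w(\boldsymbol{x})|>\Oh(\varepsilon_{\max}^{\tilde{\tau}})$ in \eqref{eq:max}, completing the argument.

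The main obstacle is the propagation of the Rellich identity $w=\widetilde{w}$ from the far exterior all the way up to the entirety of $\hat{\Gamma}$. The unbounded-path hypothesis is precisely what rules out the pathological situation in which part of $\hat{\Gamma}$ is shielded from $G$ by another piece of $\Omega\cup\widetilde{\Omega}$; without this, one could transfer Cauchy data only on the portion of $\hat{\Gamma}$ accessible from $G$, which would be insufficient to set up the coupled transmission system that powers Corollaries \ref{Cor:1}--\ref{Cor:2}. The remaining technical care concerns verifying that the traces of $\widetilde{w}$ on $\hat{\Gamma}$ taken from $\mathcal{N}_\varepsilon$ (inside $\Omega$) and from $G$ (outside $\widetilde{\Omega}$) coincide, which follows from the fact that $\widetilde{w}$ is a classical Helmholtz solution across $\hat{\Gamma}\subset\mathbb{R}^n\setminus\widetilde{\Omega}$.
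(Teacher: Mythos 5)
Your proposal is correct and follows essentially the same route as the paper: argue by contradiction, use Rellich's lemma to identify the scattered fields in the unbounded component accessible via the path hypothesis, transfer the Cauchy data to the lateral boundary $\hat{\Gamma}$, reduce to the transmission system \eqref{eq:s1} or \eqref{eq:h2}, and invoke Corollary \ref{Cor:1} or \ref{Cor:2} to contradict the admissibility bound \eqref{eq:max}. The only cosmetic difference is the WLOG placement of the end (you take $\mathcal{N}_\varepsilon\subset\Omega\setminus\widetilde{\Omega}$ so $w$ plays the role of the medium-equation solution $u$, whereas the paper takes the end in $\widetilde{\Omega}\setminus\Omega$ so $w$ plays the role of the Helmholtz solution $v$), which is immaterial since the corollaries bound both components and both scatterers are assumed admissible.
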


\subsection{Technical developments and discussion}
It is known that transmission eigenfunctions are closely linked to the presence or absence of non-scattering phenomena, particularly the potential for invisibility. In recent years, it has been observed that when the medium exhibits geometric singularities and the characterized physical configuration $q$ has singularities at the boundary of the domain, the scatterer can be visible to any incident wave for acoustic medium scattering. Significant interest has emerged in quantitatively characterizing the spectral geometry of transmission eigenfunctions, particularly concerning two distinct types of singularities: geometric singularities of the underlying domain and singularities in the physical configuration. Related studies have attracted significant attention and made substantial advances, as highlighted in \cite{BL2017, BL2, CV2023, SS2021} and the references cited therein. Notably, \cite{BL2017} first discovered that acoustic transmission eigenfunctions generically vanish near corner points, where singularities arise in both the physical configuration and the geometry of the underlying domain. Indeed, the geometrical singularity is restricted to right corners, implying that a medium scatterer containing right corners cannot be invisible. Later, \cite{BL2} finds that interior transmission eigenfunctions exhibit near-vanishing behavior at specific high-curvature points on the boundary, which may be very smooth or even analytic. Recently, there have been results on the scattering and non-scattering of inhomogeneities, as discussed in \cite{CV2023, SS2021}. The study \cite{CV2023} reveals that a medium scatterer cannot be transparent because of geometric and physical singularities. The geometric singularity occurs at a non-analytic point on the boundary, while the physical singularity relates to a real-analytic configuration. Furthermore, \cite{SS2021} indicates that the domain must be either a regular domain or a quadrature domain when the incident wave does not vanish. Collectively, these results emphasize the phenomenon of local vanishing around specifically distinctive geometrical points on the boundary.

While earlier research on the occurrence or non-occurrence of invisibility of the medium scatterer, as discussed in \cite{ B2018, BL2019, BL, BLX2021, BP2013, BPL2014, CakoX2021, PS2017}, shares similarities with the current perspective, particularly regarding the vanishing or near-vanishing of wave fields at geometrically singular points on the boundary, there are subtle and technical distinctions between the two viewpoints. Analyzing the singularities of transmission eigenfunctions can yield deeper insights. Specifically, the essential nature of the singularity in eigenfunctions arises from the singularity of the physical configuration rather than from the singularity of the geometric region itself. In this paper, we introduce a more general coupled PDE system \eqref{eq:sym1} and investigate the local pattern formations of these equations. Our main results extend the investigation of near-vanishing properties for transmission eigenfunctions in \eqref{eq:s1} from boundary points to interior points in specific domains, where the points within the domain can be regarded as extrinsic points of the high-curvature regions, as discussed in Section \ref{sub:main results}. Under prior knowledge of $u,\,v,\,f(\cdot)$ and  $ g(\cdot)$, and assuming the domain is sufficiently small in specific dimensions (e.g., thin or narrow geometries), we can quantitatively characterize the local pattern formations of the nonlinear coupled PDEs both on the boundary and within the region. This study encounters several challenges, notably the nonlinear coupling between the right-hand sides $f(\boldsymbol{x},u,v)$ and $g(\boldsymbol{x},u,v)$, as well as the boundary conditions, and the geometric complexity of the domain. It is essential to emphasize that the near-vanishing of the difference of these nonlinear right-hand sides in \eqref{eq:sym1} (see Theorem \ref{thm:1}) arises not only at the boundaries of the domains but also within their interiors, a novel aspect that has not been previously explored. Existing results regarding the vanishing or near-vanishing of transmission functions \cite{BL2017, BL2, CV2023}, conductive transmission eigenfunctions \cite{DCL2021}, elastic transmission eigenfunctions \cite{BL2019, DLS2021}. Electromagnetic transmission eigenfunctions \cite{BLX2021} predominantly focus on boundary points, where singularities in the geometry of the domain may arise. More significantly, our analysis reveals that geometric singularities of the underlying domain play a secondary role compared to the singularities in the physical configurations of the coupled system, which emerge as the dominant factor governing the observed phenomena. Notably, when the coupled right-hand sides $f(\boldsymbol{x},u,v)=\left(\lambda+V\right) u$ and $ g(\boldsymbol{x},u,v)=\lambda v$  and a non-trivial solution $(u,\,v)$ exists, the system \eqref{eq:sym1} reduces to the interior transmission eigenfunctions problem. We demonstrate that these eigenfunctions $u$ and $v$ exhibit near-vanishing behavior in sufficiently narrow subregions, as shown in Corollaries \ref{Cor:1} and  \ref{Cor:2} for different physical configurations. It is worth emphasizing that the phenomenon of eigenfunctions near-vanishing at points inside the region has not been explored in previous studies. Specifically, the results we obtained are based not only on a more complex model and novel methods but also have profound significance and important applications in areas such as inverse boundary value problems, scattering, and invisibility. Moreover, the study of local patterns within the interior of the region is particularly novel.

 To achieve these new results, we develop innovative technical strategies. Specifically, we employ microlocal tools to quantitatively characterize the near-vanishing of the difference of the coupled functions within specific regions, such as thin or narrow ends. Additionally, we utilize Complex Geometric Optics (CGO) solutions of the partial differential operator (PDO) in our analysis, which facilitates various subtle technical estimates and asymptotic evaluations. Unlike most existing research that focuses on distinctive geometric points at the boundary, our work examines the coupling of $u$ and $v$ both in the interior and along partial boundaries of the domain, without relying on transmission conditions. The complexity of this coupled system introduces significant challenges in both analysis and estimation. By meticulously analyzing the asymptotic behavior of the CGO solutions, we can achieve our desired results.

The following sections of this paper are organized as follows: In Section \ref{sec:pre}, we present a coordinate representation of product-type ends $\mathcal{N}_{\varepsilon}$ and derive preliminary results for further analysis. In Sections \ref{sec:2} and \ref{sec:4}, we investigate the local patterns of the general nonlinear coupled PDEs described in \eqref{eq:sym1} for both 2D and 3D cases. Section \ref{sec:6} primarily presents proofs of the theorems and corollaries outlined in Section \ref{sec:applications}. Finally, Appendix \ref{sec:apA} demonstrates that the geometric condition \textbf{Assumption G} can be satisfied within product-type ends $\mathcal{N}_{\varepsilon}$, and Appendix \ref{sec:apB} establishes validity of regularity conditions on $u$ and $v$ in Corollary \ref{Cor:2} for the transmission eigenvalue problem \eqref{eq:h2}.

\section{preliminaries}\label{sec:pre}
\subsection{Coordinate representation of thin or narrow ends}\label{sub:parameterized}
In this subsection, we provide coordinate representations of thin or narrow ends $\mathcal{N}_\varepsilon$ that are relevant to our research. For simplicity, we focus on a specific class of simple curves $\eta(t)$ located in the coordinate plane to study the local pattern formations of coupled PDEs in \eqref{eq:sym1}; however, we assert that our results also apply to more general classes of curves. 

Let $\Omega_{\varepsilon} \subset \mathbb{R}^{n-1}$ ($n=2,3$) denote a bounded, simply-connected Lipschitz domain whose boundary $\partial\Omega_{\varepsilon}$
 is piecewise smooth and parameterizable. We establish the following geometric assumptions:
\begin{enumerate}
    \item [(i)] there exists a unique \(t_0 \in I\) such that the curve \(\eta \colon I \to \mathbb{R}^n\) intersects the cross-section \(\Omega_{\varepsilon}\) at only one point. Namely,
          \[
              \eta(t) \cap \Omega_{\varepsilon} = \{\eta(t_0)\},
          \]
          where \(\eta(t_0)\) represents the highest point of \(\Omega_{\varepsilon}\) along the \(x_n\)-axis. Furthermore, the projection of \(\Omega_{\varepsilon}\) onto the \(x_n\)-axis possesses a length of \(\varepsilon\);
    \item[(ii)] the curve \(\eta\) is injective, which implies that \(\eta(t_1) \neq \eta(t_2)\) for any distinct \(t_1, t_2 \in I\);
    \item[(iii)] for thin ends, the diameter of \(\Omega_{\varepsilon}\) is defined as \(\mathrm{diam}(\Omega_{\varepsilon}) = \varepsilon\). For narrow ends, \(\Omega_{\varepsilon}\) represents a rectangle with width \(\varepsilon\) and conventional length, where \(\varepsilon\) is taken to be sufficiently small in both cases.
\end{enumerate}
Therefore, the coordinate representation of \(\mathcal{N}_{\varepsilon}\) in \eqref{eq:N} is
\begin{align*}
\mathcal{N}_{\varepsilon}= \Big\{ (x_1,\dots,x_n) \in \mathbb{R}^n \;\Big|\; (x_1,\dots,x_{n-2},x_n) \in \Omega_\varepsilon, \ x_{n,\min}= x_n(t) - \varepsilon,\ x_{n,\max}=x_n(t) \Big\}.
\end{align*}
Here, we set $x_{n-1} =t$ and define $x_n(t) := \gamma(t)$ (representing a graph over $t$).  The mapping $\eta(t) = \big({\bf 0},t,\gamma(t)\big)$ characterizes the boundary geometry of $\mathcal{N}_{\varepsilon}$. Furthermore, we assume that 
\begin{equation}\label{eq:gamm}
		\max|\gamma(t)|=\Oh(\varepsilon).
	\end{equation}

It is crucial to emphasize that for a general curve \(\eta(t)\) not lying in the coordinate plane, we need to project the infinitesimal element of the curve orthogonally onto the normal line of \(\Omega_\varepsilon\). The subsequent processing steps are analogous to those for curves or surfaces in the coordinate plane; however, the calculations are notably complex and technical. Therefore, we will focus exclusively on the aforementioned curve.

\subsection{Preliminary results   }
In this subsection, we present some preliminary results that shall be frequently used in our subsequent analysis. We begin by introducing the CGO solutions.

\begin{lem}
 
	Let \begin{equation}\label{eq:cgo}
		u_0(\boldsymbol{x}) =  e^{\rho \cdot \boldsymbol{x}}, \ 	\rho = s (\boldsymbol{d} + \bsi\boldsymbol{d}^{\perp}) , \ \boldsymbol{x} \in  \mathbb{R}^n, n=2,3,
	\end{equation}
where 
$ \boldsymbol{d},\,\boldsymbol{d}^{\perp} \in \mathbb{S}^{n-1}$ satisfy $\boldsymbol{d} \cdot \boldsymbol{d}^{\perp}=0$, and $s\in \mathbb{R}_+$.
Then 
	\begin{equation*}
		\Delta u_0 (\boldsymbol{x})  = 0 .
	\end{equation*}
\end{lem}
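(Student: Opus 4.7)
The plan is to verify directly that $u_0$ is harmonic by computing $\Delta u_0$ from the exponential form and showing that the resulting quadratic form $\rho \cdot \rho$ vanishes, using both the unit-norm constraints $|\boldsymbol{d}| = |\boldsymbol{d}^{\perp}| = 1$ and the orthogonality $\boldsymbol{d} \cdot \boldsymbol{d}^{\perp} = 0$. Since $u_0(\boldsymbol{x}) = e^{\rho \cdot \boldsymbol{x}}$ is a smooth (holomorphic in $\boldsymbol{x}$) function of the coordinates, term-by-term differentiation is justified, and I do not need to worry about convergence subtleties.

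First I would compute the gradient: writing $\rho = (\rho_1, \ldots, \rho_n) \in \mathbb{C}^n$, we have $\partial_{x_j} u_0 = \rho_j e^{\rho \cdot \boldsymbol{x}}$, so $\partial_{x_j}^2 u_0 = \rho_j^2 e^{\rho \cdot \boldsymbol{x}}$. Summing over $j$ gives
\begin{equation*}
\Delta u_0(\boldsymbol{x}) = \Bigl(\sum_{j=1}^n \rho_j^2\Bigr) e^{\rho \cdot \boldsymbol{x}} = (\rho \cdot \rho)\, u_0(\boldsymbol{x}),
\end{equation*}
where $\rho \cdot \rho$ denotes the bilinear (not Hermitian) dot product on $\mathbb{C}^n$. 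Thus the problem reduces to checking $\rho \cdot \rho = 0$.

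Next I would expand using $\rho = s(\boldsymbol{d} + \bsi \boldsymbol{d}^{\perp})$:
\begin{equation*}
\rho \cdot \rho = s^2\bigl(\boldsymbol{d} \cdot \boldsymbol{d} + 2\bsi\, \boldsymbol{d} \cdot \boldsymbol{d}^{\perp} - \boldsymbol{d}^{\perp} \cdot \boldsymbol{d}^{\perp}\bigr) = s^2\bigl(|\boldsymbol{d}|^2 - |\boldsymbol{d}^{\perp}|^2 + 2\bsi\, \boldsymbol{d} \cdot \boldsymbol{d}^{\perp}\bigr).
\end{equation*}
The hypotheses $\boldsymbol{d}, \boldsymbol{d}^{\perp} \in \mathbb{S}^{n-1}$ give $|\boldsymbol{d}|^2 = |\boldsymbol{d}^{\perp}|^2 = 1$, and the orthogonality assumption kills the cross term. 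Therefore $\rho \cdot \rho = 0$, whence $\Delta u_0 \equiv 0$.

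There is really no obstacle in this argument: it is a one-line check that the symbol of the Laplacian evaluated at the complex vector $\rho$ vanishes, which is precisely the defining feature of a CGO solution for the free Laplacian. The only minor care needed is to distinguish the bilinear product $\rho \cdot \rho$ from the Hermitian norm $|\rho|^2 = s^2(|\boldsymbol{d}|^2 + |\boldsymbol{d}^{\perp}|^2) = 2s^2 \neq 0$; confusing the two would give the wrong conclusion, so I would flag this distinction explicitly in the write-up.
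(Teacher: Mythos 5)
Your proof is correct: the identity $\Delta u_0 = (\rho\cdot\rho)\,u_0$ together with $\rho\cdot\rho = s^2\bigl(|\boldsymbol{d}|^2 - |\boldsymbol{d}^{\perp}|^2 + 2\bsi\,\boldsymbol{d}\cdot\boldsymbol{d}^{\perp}\bigr) = 0$ is exactly the standard verification that the CGO exponential is harmonic, and the paper, which states this lemma without proof as a known fact, relies on precisely this computation. Your explicit remark distinguishing the bilinear product $\rho\cdot\rho$ from the Hermitian norm $|\rho|^2 = 2s^2$ is a sensible precaution and nothing more is needed.
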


Next, we will review a specific type of Green's formula for $H^1$ functions, which will be necessary to establish a crucial integral identity in our subsequent analysis.
\begin{lem}\cite{Costabel} \label{lem:Green}
	Let $\Omega \subset \mathbb{R}^n$ be a bounded Lipschitz domain. For any $f,g \in H^1_{\Delta}:=\{f\in H^1(\Omega)|\Delta f \in L^2(\Omega)\}$, then the following Green formula holds 
	\begin{equation}\label{eq:green}
		\int_{\Omega} (g \Delta f - f \Delta g) \rmd \boldsymbol{x} = \int_{\partial \Omega} (g \partial_{\nu} f - f \partial_{\nu}g)  \rmd \sigma,
	\end{equation}
	where $\partial_{\nu}f: = \nabla f \cdot \nu |_{\partial \Omega}\in H^{-\frac{1}{2}}(\partial\Omega)$,
    and $\nu$ is the exterior normal derivative of $\partial \Omega$.
\end{lem}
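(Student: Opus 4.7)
The plan is to establish the identity by first giving an intrinsic meaning to the normal derivative $\partial_\nu f$ for $f\in H^1_\Delta(\Omega)$ via a duality/extension argument, then deriving Green's first identity, and finally symmetrizing in $(f,g)$ to obtain \eqref{eq:green}. Throughout, the standard surface integral $\int_{\partial\Omega}(\cdot)\,d\sigma$ in the statement will be interpreted as the $H^{-1/2}(\partial\Omega)$--$H^{1/2}(\partial\Omega)$ duality pairing, since the Lipschitz regularity of $\partial\Omega$ is too weak to guarantee $\partial_\nu f\in L^2(\partial\Omega)$ in general.

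First, I would define, for each $f\in H^1_\Delta(\Omega)$, a linear functional $T_f$ on $H^{1/2}(\partial\Omega)$ as follows. By the Lipschitz trace theorem, the trace operator $\gamma_0:H^1(\Omega)\to H^{1/2}(\partial\Omega)$ is surjective with a bounded right inverse; so given $\psi\in H^{1/2}(\partial\Omega)$, pick any extension $\tilde\psi\in H^1(\Omega)$ with $\gamma_0\tilde\psi=\psi$, and set
\begin{equation*}
\langle T_f,\psi\rangle := \int_\Omega \nabla f\cdot\nabla\tilde\psi\,d\boldsymbol{x}+\int_\Omega \tilde\psi\,\Delta f\,d\boldsymbol{x}.
\end{equation*}
The next step is to check that $T_f$ does not depend on the choice of $\tilde\psi$. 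If $\tilde\psi_1,\tilde\psi_2$ are two extensions, then $w:=\tilde\psi_1-\tilde\psi_2\in H^1_0(\Omega)$, and by density of $C_c^\infty(\Omega)$ in $H^1_0(\Omega)$ together with the distributional definition of $\Delta f$ (valid since $\nabla f\in L^2$), one has $\int_\Omega \nabla f\cdot\nabla w\,d\boldsymbol{x}+\int_\Omega w\,\Delta f\,d\boldsymbol{x}=0$. Boundedness
\begin{equation*}
|\langle T_f,\psi\rangle|\le \bigl(\|\nabla f\|_{L^2(\Omega)}+\|\Delta f\|_{L^2(\Omega)}\bigr)\,\|\tilde\psi\|_{H^1(\Omega)}\lesssim \|f\|_{H^1_\Delta(\Omega)}\|\psi\|_{H^{1/2}(\partial\Omega)}
\end{equation*}
(after minimizing over extensions) then gives $T_f\in H^{-1/2}(\partial\Omega)$; this is the object we denote by $\partial_\nu f$.

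Second, with this definition, Green's first identity holds by construction: for $f\in H^1_\Delta(\Omega)$ and $g\in H^1(\Omega)$, choosing the extension $\tilde\psi=g$ of $\psi=\gamma_0 g$,
\begin{equation*}
\langle \partial_\nu f,\gamma_0 g\rangle_{H^{-1/2},H^{1/2}}=\int_\Omega \nabla f\cdot\nabla g\,d\boldsymbol{x}+\int_\Omega g\,\Delta f\,d\boldsymbol{x}.
\end{equation*}
Now take $g\in H^1_\Delta(\Omega)$ as well, write the analogous identity with the roles of $f$ and $g$ swapped, and subtract. The symmetric $\int_\Omega\nabla f\cdot\nabla g\,d\boldsymbol{x}$ terms cancel, leaving
\begin{equation*}
\int_\Omega (g\,\Delta f-f\,\Delta g)\,d\boldsymbol{x}=\langle\partial_\nu f,\gamma_0 g\rangle-\langle\partial_\nu g,\gamma_0 f\rangle,
\end{equation*}
which is precisely \eqref{eq:green} under the stated dualistic interpretation of the boundary integral.

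The main obstacle is making the extension-independence argument rigorous on a Lipschitz domain: one needs $C_c^\infty(\Omega)$ dense in $H^1_0(\Omega)$ (standard) together with a correct interpretation of $\langle\Delta f,\phi\rangle=-\int_\Omega \nabla f\cdot\nabla\phi\,d\boldsymbol{x}$ for $f\in H^1_\Delta$ and $\phi\in C_c^\infty(\Omega)$ to extend by continuity to $\phi\in H^1_0(\Omega)$. A completely parallel route, which I would mention as an alternative but not pursue in detail, is to invoke the density of $C^\infty(\overline{\Omega})$ in $H^1_\Delta(\Omega)$ (with the graph norm) on Lipschitz domains, prove \eqref{eq:green} by classical integration by parts for smooth $f,g$, and then pass to the limit using the continuity of $\gamma_0:H^1(\Omega)\to H^{1/2}(\partial\Omega)$ and of $\partial_\nu:H^1_\Delta(\Omega)\to H^{-1/2}(\partial\Omega)$ just established.
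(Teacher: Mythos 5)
Your proposal is correct: the paper does not prove this lemma but simply cites Costabel, and your argument — defining $\partial_\nu f\in H^{-1/2}(\partial\Omega)$ by duality through extensions, verifying independence of the extension via density of $C_c^\infty(\Omega)$ in $H^1_0(\Omega)$, obtaining Green's first identity by construction, and then symmetrizing in $(f,g)$ — is precisely the standard argument underlying that citation. Your remark that the boundary integral in \eqref{eq:green} must be read as the $H^{-1/2}(\partial\Omega)$--$H^{1/2}(\partial\Omega)$ duality pairing is also the correct interpretation of the statement as used in the paper.
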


The subsequent lemma establishes fundamental estimates within $\mathcal{N}_{\varepsilon}$ that are essential to analyze its geometric properties.
\begin{lem}
Let $(u,v )$ be a solution to the coupled system \eqref{eq:sym1} with configurations \eqref{eq:ap1}--\eqref{eq:fg}. Define
	\begin{equation}\label{eq:uv}
		\tilde{u}:=h_1^{1/2}u,\ \ \tilde{v}:=h_2^{1/2}v.
	\end{equation}
Then,  we obtain the following system 
\begin{equation}\label{eq:wu1}
			\begin{cases}
				 -\Delta \tilde{w} - \big( \frac{1}{4} h_1^{-2}|\nabla h_1|^2- \frac{1}{4} h_2^{-2}|\nabla h_2|^2 + \frac{1}{2} h_2^{-1}\Delta h_2 - \frac{1}{2} h_1^{-1}\Delta h_1\big) \tilde{v} \\ \hspace{0.1cm} - \big(\frac{1}{4} h_1^{-2}|\nabla h_1|^2-\frac{1}{2}h_1^{-1}\Delta h_1\big)\tilde{w} = h_1^{-1/2}f(\boldsymbol{x},h_1^{-1/2}\tilde{u},h_2^{-1/2}\tilde{v})- h_2^{-1/2}g(\boldsymbol{x},h_1^{-1/2}\tilde{u},h_2^{-1/2}\tilde{v})  & \mbox{in}\ \mathcal{N}_\varepsilon,\medskip\\
			\hspace{0.38cm}	\tilde{w}=\frac{h_1- h_2 }{h_1^{1/2}h_2^{1/2}+h_2}\tilde{v}  & \mbox{on}\ \hat{\Gamma},\medskip\\
			\partial_{\nu}\tilde{w}= \frac{h_2- h_1 }{h_1^{1/2}(h_1^{1/2}+h_2^{1/2})}\partial_{\nu}\tilde{v}- \frac{1}{2}h_1^{-1/2} h_2^{-1/2} (\nabla h_2-\nabla h_1)\cdot \nu \tilde{v} & \mbox{on}\ \hat{\Gamma},
			\end{cases}
		\end{equation}
where $\tilde{w}:=\tilde{u}-\tilde{v}$, $\mathcal{N}_{\varepsilon} \subset \Omega$ and $\hat{\Gamma}$ are defined in \eqref{eq:N} and \eqref{eq:lateral}, respectively. Furthermore, the following estimates hold:
	\begin{equation}\label{eq:w1}
		\|\tilde{w}\|_{L^{\infty}(\mathcal{N}_\varepsilon)} \leq C(M_1,C_1)(\varepsilon^{\alpha_3} + \varepsilon^{1+\alpha_1}),\,\,  \|\partial_{\nu} \tilde{w}\|_{L^{\infty}(\mathcal{N}_\varepsilon)} \leq  C(M_1,C_1)\varepsilon^{\alpha_1},
	\end{equation}
where $\alpha_1$ and $\alpha_3$ are as specified in Theorem \ref{thm:1}, and $C(M_1, C_1)$ is a positive constant independent of $\varepsilon$.

Particularly, for the case where \(h_1=h_2=1\) and \(f(\boldsymbol{x})=k^2q u(\boldsymbol{x})\) and \(g(\boldsymbol{x})=k^2v(\boldsymbol{x})\) in system \eqref{eq:sym1} (as specified in Corollary \ref{Cor:2}), the system \eqref{eq:h2} transforms into the following system
\begin{equation}\label{eq:wu0} 
\begin{cases} 
\Delta w +k^2 q w = k^2 (1-q)v\quad & \mbox{in}\ \mathcal{N}_\varepsilon,\medskip\\ 
w=0, \ \partial_{\nu} w = 0\quad & \mbox{on}\ \hat{\Gamma}, 
\end{cases} 
\end{equation} 
where \( w:=u-v \). 
Then, it follows that  
\begin{equation}\label{eq:w} 
\|w\|_{L^{\infty}(\mathcal{N}_\varepsilon)} \leq C(C_1)\varepsilon^{1+\alpha_1}, \ \|\nabla w\|_{L^{\infty}(\mathcal{N}_\varepsilon)} \leq C(C_1)\varepsilon^{\alpha_1}, 
\end{equation} 
where \( C_1 \) is denoted in \eqref{eq:ap1}.
\end{lem}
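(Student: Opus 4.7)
The plan is to first execute the Liouville-type change of variables that converts the divergence-form system \eqref{eq:sym1} into a Schr\"odinger-type system for $\tilde u$ and $\tilde v$, subtract the resulting equations to obtain the bulk equation in \eqref{eq:wu1}, convert the transmission conditions $u=v$, $h_1\partial_\nu u = h_2\partial_\nu v$ on $\hat\Gamma$ into inhomogeneous boundary data for $\tilde w = \tilde u - \tilde v$, and finally propagate the smallness of these data from $\hat\Gamma$ into the interior of $\mathcal{N}_\varepsilon$ via a one-dimensional Taylor expansion along the normal direction supplied by \textbf{Assumption G}.

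Writing $u = h_1^{-1/2}\tilde u$ and expanding $-\nabla\cdot(h_1\nabla u)$ yields, after dividing through by $h_1^{1/2}$,
\[
-\Delta \tilde u + \Bigl(-\tfrac14 h_1^{-2}|\nabla h_1|^2 + \tfrac12 h_1^{-1}\Delta h_1\Bigr)\tilde u = h_1^{-1/2} f(\boldsymbol{x},u,v),
\]
with the analogous identity for $\tilde v$. Subtracting and regrouping via $\tilde u = \tilde w + \tilde v$ produces the interior PDE in \eqref{eq:wu1}. The transmission identities on $\hat\Gamma$, together with $v = h_2^{-1/2}\tilde v$ and the factorization $h_2^{1/2}-h_1^{1/2} = (h_2-h_1)/(h_1^{1/2}+h_2^{1/2})$, reduce $\tilde w|_{\hat\Gamma}$ and $\partial_\nu \tilde w|_{\hat\Gamma}$ to the two closed-form expressions displayed in \eqref{eq:wu1}; this step is purely algebraic bookkeeping.

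The smallness of the boundary data now follows directly from \eqref{eq:h}: $|h_1-h_2|$ and $|\nabla(h_1-h_2)|$ are both $O(\varepsilon^{\alpha_3})$ on $\partial\mathcal{N}_\varepsilon$, while $\|\tilde v\|_{C^{1,\alpha_1}(\overline{\mathcal{N}}_\varepsilon)} \le C(M_1,C_1)$ is inherited from \eqref{eq:ap1} together with $h_2\in C^{2,\alpha_2}$ and $\alpha_1\le\alpha_2$, so the closed-form boundary expressions yield
\[
\|\tilde w\|_{L^\infty(\hat\Gamma)} \le C(M_1,C_1)\,\varepsilon^{\alpha_3}, \qquad \|\partial_\nu \tilde w\|_{L^\infty(\hat\Gamma)} \le C(M_1,C_1)\,\varepsilon^{\alpha_3}.
\]
To propagate these into $\mathcal{N}_\varepsilon$, I invoke \textbf{Assumption G}: for each $\boldsymbol{x}\in\mathcal{N}_\varepsilon$ there is $\boldsymbol{x}_0 \in \hat\Gamma$ with $\boldsymbol{x}-\boldsymbol{x}_0$ parallel to $\nu_{\boldsymbol{x}_0}$ and $|\boldsymbol{x}-\boldsymbol{x}_0| \le \varepsilon$. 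A first-order Taylor expansion with H\"older remainder, using $\tilde w \in C^{1,\alpha_1}(\overline{\mathcal{N}}_\varepsilon)$, then gives
\[
|\tilde w(\boldsymbol{x})| \le |\tilde w(\boldsymbol{x}_0)| + |\partial_\nu \tilde w(\boldsymbol{x}_0)|\,\varepsilon + [\tilde w]_{C^{1,\alpha_1}}\,\varepsilon^{1+\alpha_1} \le C\bigl(\varepsilon^{\alpha_3} + \varepsilon^{1+\alpha_1}\bigr),
\]
and the $C^{0,\alpha_1}$-modulus of $\partial_\nu \tilde w$ combined with its boundary smallness yields $|\partial_\nu \tilde w(\boldsymbol{x})| \le C\varepsilon^{\alpha_1}$, proving \eqref{eq:w1}. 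The substantive ingredient here is the colinearity supplied by \textbf{Assumption G}: without it the expansion would pick up tangential derivatives of $h_1-h_2$ along $\hat\Gamma$ which are not directly controlled by \eqref{eq:h}, and this is the principal obstacle.

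Finally, in the specialized regime $h_1=h_2=1$, $f=k^2 q u$, $g=k^2 v$, the two equations reduce to $\Delta u + k^2 q u = 0$ and $\Delta v + k^2 v = 0$, so direct subtraction delivers $\Delta w + k^2 q w = k^2(1-q)v$ immediately, and the transmission conditions collapse to $w=0$ and $\partial_\nu w = 0$ on $\hat\Gamma$. Since $w|_{\hat\Gamma}\equiv 0$, its tangential gradient also vanishes and hence $\nabla w|_{\hat\Gamma}=0$; the same Taylor-plus-\textbf{Assumption G} argument then yields the sharper bounds $\|w\|_{L^\infty(\mathcal{N}_\varepsilon)} \le C(C_1)\,\varepsilon^{1+\alpha_1}$ and $\|\nabla w\|_{L^\infty(\mathcal{N}_\varepsilon)} \le C(C_1)\,\varepsilon^{\alpha_1}$ of \eqref{eq:w}.
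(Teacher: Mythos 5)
Your proposal is correct and follows essentially the same route as the paper: the Liouville-type substitution $\tilde u=h_1^{1/2}u$, $\tilde v=h_2^{1/2}v$ to obtain \eqref{eq:wu1}, smallness of the boundary data for $\tilde w$ and $\partial_\nu\tilde w$ from \eqref{eq:h}, and then a first-order Taylor expansion with H\"older remainder propagated into $\mathcal{N}_\varepsilon$ along the normal direction furnished by \textbf{Assumption G} with $|\boldsymbol{x}-\boldsymbol{x}_0|\le\varepsilon$, including the same treatment of the special case \eqref{eq:wu0} via $w=\partial_\nu w=0$ forcing $\nabla w=0$ on $\hat\Gamma$. No gaps; this matches the paper's argument step for step.
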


\begin{proof}
We first prove \eqref{eq:w1} holds. By substituting \eqref{eq:uv}, and $\tilde{w}$ into \eqref{eq:sym1}, we can derive \eqref{eq:wu1}.  Since $u, v \in C^{1,\alpha_1}(\overline{\mathcal{N}}_\varepsilon)$ are defined as in \eqref{eq:ap1}, and $h_1,h_2 \in C^{2,\alpha_2}(\overline{\mathcal{N}}_\varepsilon)$ are defined in \eqref{eq:h}, with the additional assumption that $\alpha_1 \leq \alpha_2$, it follows that $\tilde{u}, \tilde{v} \in C^{1,\alpha_1}(\overline{\mathcal{N}}_{\varepsilon})$. Then, for any $\boldsymbol{x} \in \mathcal{N}_\varepsilon$, the following expansions hold:
		\begin{align}\label{eq:w exp t}
		\tilde{w}(\boldsymbol{x})=  \tilde{w} (\boldsymbol{x}_0) + \nabla \tilde{w}(\boldsymbol{x}_0) \cdot (\boldsymbol{x} - \boldsymbol{x}_0) + \tilde{\delta} \tilde{w},\quad  |\tilde{\delta}  w| \le C_1 |\boldsymbol{x}- \boldsymbol{x}_0|^{1+\alpha_1},
	\end{align}
	and 
	\begin{align}\label{eq:w deri t}
		\nabla	\tilde{w}(\boldsymbol{x})= \nabla \tilde{w} (\boldsymbol{x}_0) + \tilde{\delta} \nabla w ,\quad  |\tilde{\delta} \nabla \tilde{w}| \le C_1 |\boldsymbol{x}- \boldsymbol{x}_0|^{\alpha_1},
	\end{align}
where $\boldsymbol{x}_0 \in \overline{\mathcal{N}}_\varepsilon$ and $C_1$ is defined in \eqref{eq:ap1}. Specifically, it is reasonable to choose $\boldsymbol{x}_0 \in \hat{\Gamma}$ in \eqref{eq:w exp t} and \eqref{eq:w deri t} by applying $\varepsilon \ll 1$. Hence, by combining the boundary conditions presented in \eqref{eq:wu1} and utilizing \eqref{eq:w exp t} and \eqref{eq:w deri t}, we obtain
	\begin{align}\label{eq:w exp ab}
			|\tilde{w}(\boldsymbol{x})|
			&\leq \left| \frac{h_1- h_2 }{h_1^{1/2}h_2^{1/2}+h_2} \tilde{v} (\boldsymbol{x}_0)\right| + |\nabla \tilde{w}(\boldsymbol{x}_0) \cdot \nu ||\boldsymbol{x} - \boldsymbol{x}_0| + C_1 |\boldsymbol{x}- \boldsymbol{x}_0|^{1+\alpha_1},\notag\\
			|\nabla	\tilde{w}(\boldsymbol{x})\cdot \nu|
			&\leq \left|\frac{h_2- h_1 }{h_1^{1/2}(h_1^{1/2}+h_2^{1/2})}\partial_{\nu}\tilde{v} (\boldsymbol{x}_0)- \frac{1}{2}h_1^{-1/2} h_2^{-1/2} (\nabla h_2-\nabla h_1)\cdot \nu \,\tilde{v}(\boldsymbol{x}_0) \right|+  C_1 |\boldsymbol{x}- \boldsymbol{x}_0|^{\alpha_1}.
		\end{align}
By observing \eqref{eq:w exp t}, \eqref{eq:w exp ab} and Theorem \ref{th:cover}, we know that there must exist $\boldsymbol{x}_0 \in \hat{\Gamma}$ such that
\begin{equation*}
\nabla \tilde{w}(\boldsymbol{x}_0) \cdot (\boldsymbol{x} - \boldsymbol{x}_0) = \nabla \tilde{w}(\boldsymbol{x}_0) \cdot \nu |\boldsymbol{x} - \boldsymbol{x}_0|, \quad \forall \ \boldsymbol{x} \in \mathcal{N}_{\varepsilon}.
\end{equation*}
Here, $\nu$ is the exterior unit normal vector to $\hat{\Gamma}$. Next, by combining the geometric construction $\mathcal{N}_{\varepsilon}$ (where $\mathcal{N}_{\varepsilon} := \mathcal{N}_{\varepsilon}^T$ for the thin end case and $\mathcal{N}_{\varepsilon} := \mathcal{N}_{\varepsilon}^N$ for the narrow end case) and applying \textbf{Assumption G}, we know that for any point $\boldsymbol{x} \in \mathcal{N}_{\varepsilon}$, there exists a point $\boldsymbol{x}_0 \in \hat{\Gamma}$ such that the direction of $\boldsymbol{x}_0 - \boldsymbol{x}$ coincides with the direction of $\boldsymbol{\nu}_{\boldsymbol{x}_0}$. Specifically, 
\[
\frac{\boldsymbol{x}_0 - \boldsymbol{x}}{\|\boldsymbol{x}_0 - \boldsymbol{x}\|} = \boldsymbol{\nu}_{\boldsymbol{x}_0},
\]
where $\boldsymbol{\nu}_{\boldsymbol{x}_0}$ is the exterior unit normal vector to $\hat{\Gamma}$ at $\boldsymbol{x}_0$. Furthermore, we get
 \begin{equation}\notag  
 \forall \ \boldsymbol{x} \in \mathcal{N}_{\varepsilon}, \ \exists \ 0 \leq r \leq \varepsilon, \ \mbox{such} \mbox{ that} \ B(\boldsymbol{x},r) \ \mbox{is tangent to } \hat{\Gamma} \ \mbox{at the point} \ \boldsymbol{x}_0.  
 \end{equation}  
 This implies that 
 \begin{equation}\label{eq:xx0} 
 |\boldsymbol{x} - \boldsymbol{x}_0| \le \varepsilon. 
 \end{equation} 
Thus, by incorporating \eqref{eq:w exp ab}--\eqref{eq:xx0}, and the boundary estimates for $h_1$ and $h_2$ as presented in \eqref{eq:h}, we can derive \eqref{eq:w1}.

We now prove that \eqref{eq:w} holds. Similar to \eqref{eq:w exp t} and \eqref{eq:w deri t}, for any $ \boldsymbol{x}\in \mathcal{N}_\varepsilon$, the following expansions hold:
\begin{align}\label{eq:w exp} 
w(\boldsymbol{x})=  w (\boldsymbol{x}_0) + \nabla w(\boldsymbol{x}_0) \cdot (\boldsymbol{x} - \boldsymbol{x}_0) + \tilde{\delta} w,\quad  |\tilde{\delta}  w| \le C_1 |\boldsymbol{x}- \boldsymbol{x}_0|^{1+\alpha_1}, 
\end{align} 
and  
\begin{align}\label{eq:w deri} 
\nabla	w(\boldsymbol{x})= \nabla w (\boldsymbol{x}_0) + \tilde{\delta} \nabla w ,\quad  |\tilde{\delta} \nabla w| \le C_1 |\boldsymbol{x}- \boldsymbol{x}_0|^{\alpha_1}, 
\end{align} 
where $\boldsymbol{x}_0 \in \hat{\Gamma}$ and $C_1$ is described in \eqref{eq:ap1}. From the boundary conditions in \eqref{eq:wu0}, we deduce
\begin{equation}\label{eq:deri}
    \nabla w(\boldsymbol{x}_0) = \boldsymbol{0}, \quad \boldsymbol{x}_0\in \hat{\Gamma}.
\end{equation}
Therefore, combining \eqref{eq:xx0} with \eqref{eq:w exp}--\eqref{eq:deri}, we obtain \eqref{eq:w}.
\end{proof}

\section{Proof of Theorem \ref{thm:1} in 2D}\label{sec:2}
This section presents a detailed proof of Theorem \ref{thm:1}. It is known that the operator $\Delta$ is invariant under rigid motions. We adopt a localization strategy beginning with analysis in a subregion of $\mathcal{N}_{\varepsilon}$. The remaining portions of $\mathcal{N}_{\varepsilon}$ can then be treated through either coordinate transformations or domain translations. Without loss of generality, we employ the technique of shifting $\mathcal{N}_{\varepsilon}$.
Assume that $a>0$ and let 
\begin{align}
	\widetilde{D}_{\varepsilon}& :=\Omega_\varepsilon \times \eta(x_1) \subset \mathcal{N}_\varepsilon, \ x_1 \in ( a, a +  \varepsilon^\ell)\subset I, \ \ \mbox{with} \ \	\ell<\alpha_1,
\label{eq:mini1}
\end{align}
 denote a subregion of $\mathcal{N}_{\varepsilon}$, where $I$ and $\alpha_1$ are defined in \eqref{eq:n1} and \eqref{eq:ap1}, respectively. Analogous to \eqref{eq:lateral}, $\widetilde{\Gamma}_{\varepsilon}^3 \cup \widetilde{\Gamma}_{\varepsilon}^4 := \partial \widetilde{D}_{\varepsilon} \times \eta(x_1)$ denotes the lateral boundary of $\widetilde{D}_{\varepsilon}$. Thus, the boundary of $\widetilde{D}_{\varepsilon}$ is given by 
  $$\partial \widetilde{D}_{\varepsilon} =  \widetilde{\Gamma}_{\varepsilon}^1 \cup  \widetilde{\Gamma}_{\varepsilon}^2\cup \widetilde{\Gamma}_{\varepsilon}^3\cup \widetilde{\Gamma}_{\varepsilon}^4,$$
  where
	\begin{align*}
		\widetilde{\Gamma}_{\varepsilon}^1 &:=\Big\{(x_1,x_2)\mid x_1=a, \ x_2 \in \big( \gamma(a)-\varepsilon,\gamma(a)\big)\Big\},\notag\\
		\widetilde{\Gamma}_{\varepsilon}^2 &:=\left\{(x_1,x_2)\mid x_1=a + \varepsilon^{\ell}, \ x_2 \in \big(\gamma(a + \varepsilon^{\ell})-\varepsilon, \gamma(a + \varepsilon^{\ell})\big)\right\},\notag\\
		\widetilde{\Gamma}_{\varepsilon}^3 &:=\left\{(x_1,x_2)\mid  x_1\in (a,  a+\varepsilon^{\ell}), \ x_2=\gamma(x_1)\right\},\notag\\
		\widetilde{\Gamma}_{\varepsilon}^4 &:=\left\{(x_1,x_2)\mid x_1\in (a,  a+\varepsilon^{\ell}), \ x_2=\gamma(x_1)-\varepsilon\right\},
	\end{align*}
and $\gamma(x_1)$ satisfies \eqref{eq:gamm}. 

Next, we shift the entire region $\mathcal{N}_{\varepsilon}$ to the left by $a-$$\varepsilon^m$ units, as our approach emphasizes localized analysis. Thus, $\widetilde{D}_{\varepsilon}$, as defined in \eqref{eq:mini1}, can be expressed more precisely as
\begin{align}
	D_{\varepsilon}& :=\Omega_\varepsilon \times \eta(x_1), \ x_1 \in ( \varepsilon^m, \varepsilon^m +  \varepsilon^\ell)\subset (-L-a+\varepsilon^m,L-a+ \varepsilon^m), \ \mbox{with} \ 	\ell<\alpha_1\,\,\mbox{and} \,\, m> \frac{1}{2}.
	\label{eq:mini}
\end{align}
Similarly, $\Gamma_{\varepsilon}^3 \cup \Gamma_{\varepsilon}^4 = \partial \Omega_\varepsilon \times \eta(x_1)$ denotes the lateral boundary of $D_{\varepsilon}$. Therefore, the boundary of $D_{\varepsilon}$ is given by
$$\partial D_{\varepsilon} =  \Gamma_{\varepsilon}^1 \cup  \Gamma_{\varepsilon}^2\cup \Gamma_{\varepsilon}^3\cup \Gamma_{\varepsilon}^4,$$
where
\begin{align}
	\Gamma_{\varepsilon}^1 &:=\left\{(x_1,x_2)\mid x_1= \varepsilon^m, \ x_2 \in \left(\gamma(a)- \varepsilon , \gamma(a)\right)\right\},\notag\\
	\Gamma_{\varepsilon}^2 &:=\left\{(x_1,x_2)\mid x_1=\varepsilon^m + \varepsilon^{\ell}, \ x_2\in \big(\gamma( a + \varepsilon^{\ell})- \varepsilon , \gamma( a+\varepsilon^{\ell})\big)\right\}, \notag\\
	\Gamma_{\varepsilon}^3 &:=\left\{(x_1,x_2)\mid  x_1 \in (\varepsilon^m , \varepsilon^m+\varepsilon^{\ell}), \ x_2=\gamma(x_1 +a-\varepsilon^m)\right\},\notag\\
	\Gamma_{\varepsilon}^4 &:=\left\{(x_1,x_2)\mid x_1 \in (\varepsilon^m , \varepsilon^m +\varepsilon^{\ell}), \ x_2=\gamma(x_1+a- \varepsilon^m)-\varepsilon\right\},\label{eq:Gamma12}
\end{align}
and $\gamma$ satisfies \eqref{eq:gamm} as we are only considering the horizontal translation of $\widetilde{D}_{\varepsilon}$; please refer to Figure \ref{fig:1} for a schematic illustration of $\widetilde{D}_{\varepsilon}$ and $D_{\varepsilon}$.

\begin{figure}[htbp]
	\centering
	\includegraphics[scale=0.8]{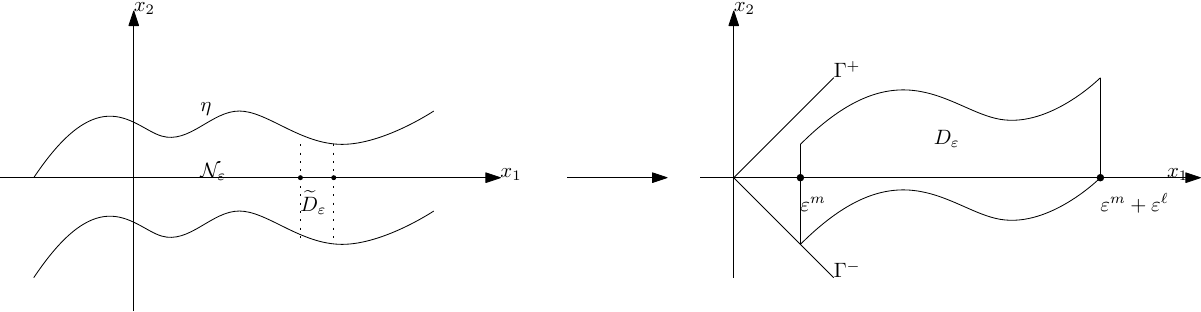}
	\caption{Schematic illustration of the thin end $\mathcal{N}_{\varepsilon}$ and its local region $D_{\varepsilon}$ in 2D}
	\label{fig:1}
\end{figure}
Next, we introduce some notation for the geometric setup. Let $(r, \theta)$ denote the polar coordinates of $\boldsymbol{x}$ in $\mathbb{R}^2$; we have $r = |\boldsymbol{x}|$ and $\theta = \arg (x_1 + \bsi x_2)$. Consider the strictly convex sector
\begin{equation*}
W=\left\{\boldsymbol{x}\in \mathbb{R}^2; \ \theta_1\leq \arg (x_1 + \bsi x_2) \leq \theta_2\right\},
\end{equation*}
whose boundaries are the half-lines $\Gamma^-$ and $\Gamma^+$. Additionally, $-\pi < \theta_1 < \theta_2 < \pi$ satisfies $0 < \theta_2 - \theta_1 < \pi$, with $\theta_1$ and $\theta_2$ representing the polar angles of $\Gamma^-$ and $\Gamma^+$.
Define
\begin{equation}\label{eq:phi}
	\boldsymbol{d}:= (\cos \phi, \sin \phi)^{\top} \in \mathbb{S}^1, \quad  \theta_2 + \frac{\pi}{2}< \phi <\theta_1 + \frac{3\pi}{2},
\end{equation}
in \eqref{eq:cgo} satisfies that \begin{equation} \label{eq:condition}
	 \boldsymbol{d} \cdot \boldsymbol{x} = \boldsymbol{d} \cdot \hat{\boldsymbol{x}} \ |\boldsymbol{x}| \leq - \delta \ |\boldsymbol{x}|<0, \quad \forall \ \boldsymbol{x}\in D_{\varepsilon},
\end{equation}
where $ \delta \in (0, 1]$ is a constant depending on $D_{\varepsilon}$ and $\boldsymbol{d}$.

To begin, we will derive several integral identities and estimates in the subsequent lemmas, which shall be used to prove Theorems \ref{thm:1}.
\begin{lem}\label{lem:integ}
	Recall that the CGO solution $u_0$ is defined in \eqref{eq:cgo}. Let $\tilde{w}=\tilde{u}-\tilde{v},\, w=u-v$, where $u$ and $v$ are defined in \eqref{eq:ap1}, $\tilde{u}$ and $\tilde{v}$ are specified in \eqref{eq:uv}.
	\begin{enumerate}
		\item [(a)] Under the same configuration as Theorem \ref{thm:1} and define $F(\boldsymbol{x}, (h_1^{-1/2} \tilde{u})(\boldsymbol{x}),(h_2^{-1/2} \tilde{v})(\boldsymbol{x}))= F(\boldsymbol{x}, u(\boldsymbol{x}),v(\boldsymbol{x})):=f(\boldsymbol{x}, u(\boldsymbol{x}),v(\boldsymbol{x}))-g(\boldsymbol{x}, u(\boldsymbol{x}),v(\boldsymbol{x}))$, the following integral identity holds
\begin{equation}\label{eq:integral fg}
		 \int_{D_{\varepsilon}} F(\boldsymbol{x}_0, u(\boldsymbol{x}_0), v(\boldsymbol{x}_0)) u_0 \,h_1^{-1/2}\rmd \boldsymbol{x} = - J_F- J_g - J_{\tilde{w}} - J_{\tilde{v}} + J_b,
		\end{equation}
	where
	\begin{align}\label{eq:g}
		J_b &:= \int_{\partial D_{\varepsilon}} \big(\tilde{w} \partial_{\nu} u_0 - u_0 \partial_{\nu} \tilde{w}\big) \rmd \sigma,\notag\\
		 J_F&:= \int_{D_{\varepsilon}} (F_1+F_2+F_3)  u_0( \boldsymbol{x}) h_1^{-1/2}(\boldsymbol{x}) \rmd \boldsymbol{x},\notag\\
		J_g&:= \int_{ D_{\varepsilon}} \frac{h_2- h_1 }{h_1^{1/2}h_2+h_1 h_2^{1/2}} u_0(\boldsymbol{x}) g(\boldsymbol{x}, u(\boldsymbol{x}),v(\boldsymbol{x}))\rmd \boldsymbol{x}, \notag\\
		J_{\tilde{w}}&:= \int_{ D_{\varepsilon}} \left(\frac{1}{4} h_1^{-2} |\nabla h_1|^2 -\frac{1}{2} h_1^{-1} \Delta h_1 \right)(\boldsymbol{x}) u_0(\boldsymbol{x}) \tilde{w}(\boldsymbol{x})\rmd \boldsymbol{x},\notag\\
		J_{\tilde{v}}&:= \int_{ D_{\varepsilon}}	\left( \frac{1}{4} h_1^{-2}|\nabla h_1|^2- \frac{1}{4} h_2^{-2}|\nabla h_2|^2 + \frac{1}{2} h_2^{-1}\Delta h_2 - \frac{1}{2} h_1^{-1}\Delta h_1\right) u_0(\boldsymbol{x}) \tilde{v}(\boldsymbol{x})\rmd \boldsymbol{x},
	\end{align}
	and
	 \begin{align}\label{eq:F}
		 F(\boldsymbol{x}_0, u(\boldsymbol{x}_0),v(\boldsymbol{x}_0))
         := &f(\boldsymbol{x}_0, u(\boldsymbol{x}_0),v(\boldsymbol{x}_0))-g(\boldsymbol{x}_0, u(\boldsymbol{x}_0),v(\boldsymbol{x}_0)),\notag\\
		F_1(\boldsymbol{x}, u(\boldsymbol{x}),v(\boldsymbol{x}))
        :=& \tilde{\delta} f_{\boldsymbol{x}}(\boldsymbol{x}, u(\boldsymbol{x}),v(\boldsymbol{x}))- \tilde{\delta} g_{\boldsymbol{x}}(\boldsymbol{x}, u(\boldsymbol{x}),v(\boldsymbol{x})),\notag \\
		F_2(\boldsymbol{x}_0, u(\boldsymbol{x}),v(\boldsymbol{x})):= &\tilde{\delta} f_u (\boldsymbol{x}_0, u(\boldsymbol{x}),v(\boldsymbol{x}))- \tilde{\delta} g_u(\boldsymbol{x}_0, u(\boldsymbol{x}),v(\boldsymbol{x})),\notag \\
		F_3(\boldsymbol{x}_0, u(\boldsymbol{x}_0),v(\boldsymbol{x})):=& \tilde{\delta} f_u (\boldsymbol{x}_0, u(\boldsymbol{x}_0),v(\boldsymbol{x}))- \tilde{\delta} g_u(\boldsymbol{x}_0, u(\boldsymbol{x}_0),v(\boldsymbol{x})), 
	\end{align}
with the following estimates: \begin{align}\label{eq:F1}
	|\tilde{\delta} f_{\boldsymbol{x}}(\boldsymbol{x}, u(\boldsymbol{x}),v(\boldsymbol{x}))|, |\tilde{\delta} g_{\boldsymbol{x}}(\boldsymbol{x}, u(\boldsymbol{x}),v(\boldsymbol{x}))| & \leq C(C_2) |\boldsymbol{x}-\boldsymbol{x}_0|^{\zeta}, \notag\\
	|\tilde{\delta} f_u (\boldsymbol{x}_0, u(\boldsymbol{x}),v(\boldsymbol{x})) |, |\tilde{\delta} g_u (\boldsymbol{x}_0, u(\boldsymbol{x}),v(\boldsymbol{x})) | & \leq C(C_1^{\zeta},C_2) |\boldsymbol{x}-\boldsymbol{x}_0|^{\zeta \alpha_1}, \notag\\
	|\tilde{\delta} f_v (\boldsymbol{x}_0, u(\boldsymbol{x}_0),v(\boldsymbol{x})) |, |\tilde{\delta} g_v (\boldsymbol{x}_0, u(\boldsymbol{x}_0),v(\boldsymbol{x})) |& \leq C(C_1^{\zeta},C_2 ) |\boldsymbol{x}-\boldsymbol{x}_0|^{\zeta \alpha_1}.
\end{align}
Here, $C_1,C_2,\zeta$ and $\alpha_1$ are specified in \eqref{eq:ap1} and \eqref{eq:fg1}, respectively. The positive constants $C(C_2)$ and $C( C_1^{\zeta},C_2)$ depend only on $C_1, C_2,\zeta$.
		\item[(b)] Under the same conditions as Corollary \ref{Cor:2}, the following integral identity holds
\begin{equation}\label{eq:integral}
\int_{\Gamma_{\varepsilon}^1 \cup \Gamma_{\varepsilon}^2} \big(w \partial_{\nu} u_0 - u_0 \partial_{\nu} w\big) \rmd \sigma = k^2 \int_{D_{\varepsilon}} (q - 1)v u_0 \rmd \boldsymbol{x} + k^2 \int_{D_{\varepsilon}} q w u_0 \rmd \boldsymbol{x}. 
\end{equation}
Furthermore, let us define \(\varphi := q - 1\). It can be directly shown that
\begin{align}\label{eq:qv}
k^2 \int_{D_{\varepsilon}} (q - 1)v u_0 \rmd \boldsymbol{x}  = & k^2 \Big\{\varphi(\boldsymbol{x}_0) v(\boldsymbol{x}_0) \int_{D_{\varepsilon}} e^{\rho \cdot \boldsymbol{x}} \rmd \boldsymbol{x} + v(\boldsymbol{x}_0) \int_{D_{\varepsilon}} (\varphi(\boldsymbol{x}) - \varphi(\boldsymbol{x}_0)) e^{\rho \cdot \boldsymbol{x}} \rmd \boldsymbol{x} \notag \\
& +  \int_{D_{\varepsilon}}(v(\boldsymbol{x}) - v(\boldsymbol{x}_0)) (\varphi(\boldsymbol{x}) - \varphi(\boldsymbol{x}_0)) e^{\rho \cdot \boldsymbol{x}} \rmd \boldsymbol{x} \Big\} \notag \\
& + \varphi(\boldsymbol{x}_0) \int_{D_{\varepsilon}}(v(\boldsymbol{x}) - v(\boldsymbol{x}_0)) e^{\rho \cdot \boldsymbol{x}} \rmd \boldsymbol{x}.
\end{align}
\end{enumerate} 
\end{lem}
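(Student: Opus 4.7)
The proof proceeds by applying Green's formula (Lemma~3.2) to the CGO solution $u_0$ (which satisfies $\Delta u_0 = 0$) against the equations satisfied by $\tilde{w}$ and $w$ respectively, and then decomposing the resulting volume integrals via telescoping expansions that exploit the Hölder regularity of the ingredients.

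For part (a), the plan is to start from the PDE for $\tilde{w}$ in \eqref{eq:wu1} and multiply both sides by $u_0$ and integrate over $D_\varepsilon$. The left-hand side contains $-\Delta\tilde{w}$, and applying Lemma~3.2 together with $\Delta u_0 = 0$ converts $\int_{D_\varepsilon} u_0\,\Delta\tilde{w}\,\mathrm{d}\boldsymbol{x}$ into the boundary term $J_b$. The remaining zeroth-order terms on the left-hand side contribute $J_{\tilde{w}}$ and $J_{\tilde{v}}$, exactly as in \eqref{eq:g}. On the right-hand side, we rewrite
\begin{equation*}
h_1^{-1/2} f - h_2^{-1/2} g \;=\; h_1^{-1/2}(f-g) \,+\, (h_1^{-1/2}-h_2^{-1/2})\,g,
\end{equation*}
where the second summand, after simplifying $h_1^{-1/2}-h_2^{-1/2} = (h_2-h_1)/\bigl(h_1^{1/2}h_2 + h_1 h_2^{1/2}\bigr)$, produces $J_g$. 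This yields the raw identity with the volume integral of $F(\boldsymbol{x},u(\boldsymbol{x}),v(\boldsymbol{x}))\,u_0\,h_1^{-1/2}$ on one side.

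Next, I would freeze the arguments at a reference point $\boldsymbol{x}_0$ via the telescoping decomposition
\begin{align*}
F(\boldsymbol{x},u(\boldsymbol{x}),v(\boldsymbol{x})) \,=\,& F(\boldsymbol{x}_0,u(\boldsymbol{x}_0),v(\boldsymbol{x}_0)) \\
&+ \bigl[F(\boldsymbol{x},u(\boldsymbol{x}),v(\boldsymbol{x})) - F(\boldsymbol{x}_0,u(\boldsymbol{x}),v(\boldsymbol{x}))\bigr] \\
&+ \bigl[F(\boldsymbol{x}_0,u(\boldsymbol{x}),v(\boldsymbol{x})) - F(\boldsymbol{x}_0,u(\boldsymbol{x}_0),v(\boldsymbol{x}))\bigr] \\
&+ \bigl[F(\boldsymbol{x}_0,u(\boldsymbol{x}_0),v(\boldsymbol{x})) - F(\boldsymbol{x}_0,u(\boldsymbol{x}_0),v(\boldsymbol{x}_0))\bigr],
\end{align*}
where the three bracketed differences define $F_1$, $F_2$, $F_3$ in \eqref{eq:F}. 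Substituting this into the integrand separates the first (constant) contribution, which becomes the left-hand side of \eqref{eq:integral fg}, from the remainders that make up $J_F$. The Hölder bounds \eqref{eq:F1} then follow from Assumption~H(b): the bound on $|\tilde{\delta}f_{\boldsymbol{x}}|$ uses the $C^{0,\zeta}$-regularity of $f$ in $\boldsymbol{x}$ directly; the bounds on $|\tilde{\delta}f_u|, |\tilde{\delta}f_v|$ combine the $C^{0,\zeta}$-regularity in the $z$ (resp.\ $p$) variable with the $C^{1,\alpha_1}$-regularity of $u$ (resp.\ $v$), so that $|u(\boldsymbol{x})-u(\boldsymbol{x}_0)|^\zeta \le C_1^\zeta |\boldsymbol{x}-\boldsymbol{x}_0|^{\zeta\alpha_1}$.

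For part (b), I would apply Lemma~3.2 to $w$ and $u_0$ on $D_\varepsilon$, using $-\Delta w = k^2 q w + k^2(1-q)v$ from \eqref{eq:wu0} together with $\Delta u_0 = 0$. The boundary of $D_\varepsilon$ decomposes as $\Gamma_\varepsilon^1 \cup \Gamma_\varepsilon^2 \cup \Gamma_\varepsilon^3 \cup \Gamma_\varepsilon^4$, and since $\Gamma_\varepsilon^3\cup\Gamma_\varepsilon^4\subset\hat{\Gamma}$ where $w=\partial_\nu w = 0$, only the integrals over $\Gamma_\varepsilon^1 \cup \Gamma_\varepsilon^2$ survive, giving \eqref{eq:integral} after moving $k^2\int_{D_\varepsilon} q w u_0\,\mathrm{d}\boldsymbol{x}$ to the right. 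Finally, setting $\varphi := q-1$ and applying the product telescoping
\begin{equation*}
\varphi(\boldsymbol{x})\,v(\boldsymbol{x}) \,=\, \varphi(\boldsymbol{x}_0)v(\boldsymbol{x}_0) + v(\boldsymbol{x}_0)\bigl(\varphi(\boldsymbol{x})-\varphi(\boldsymbol{x}_0)\bigr) + \varphi(\boldsymbol{x}_0)\bigl(v(\boldsymbol{x})-v(\boldsymbol{x}_0)\bigr) + \bigl(\varphi(\boldsymbol{x})-\varphi(\boldsymbol{x}_0)\bigr)\bigl(v(\boldsymbol{x})-v(\boldsymbol{x}_0)\bigr)
\end{equation*}
under the integral and using $u_0 = e^{\rho\cdot\boldsymbol{x}}$ produces the four summands in \eqref{eq:qv}.

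The main obstacle is the algebraic bookkeeping in part (a): one must be careful both in isolating the coefficient $\tfrac{h_2-h_1}{h_1^{1/2}h_2+h_1 h_2^{1/2}}$ that defines $J_g$ (distinguishing it from the boundary coefficient $\tfrac{h_1-h_2}{h_1^{1/2}h_2^{1/2}+h_2}$ in \eqref{eq:wu1}) and in choosing the particular telescoping order for $F$ so that the arguments match the specific Hölder norms available in Assumption~H(b); any other ordering would force an estimate that the hypotheses do not directly supply. The rest of the argument is a routine application of Green's identity and the product decomposition.
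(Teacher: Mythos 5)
Your proposal is correct and follows essentially the same route as the paper's own (much terser) proof: apply the Green formula of Lemma \ref{lem:Green} with the harmonic CGO solution $u_0$, substitute the systems \eqref{eq:wu1} and \eqref{eq:wu0} for $\tilde{w}$ and $w$, split off the coefficient $h_1^{-1/2}-h_2^{-1/2}=\frac{h_2-h_1}{h_1^{1/2}h_2+h_1h_2^{1/2}}$ to obtain $J_g$, and telescope $F$ (respectively $\varphi v$) around $\boldsymbol{x}_0$, with the H\"older bounds \eqref{eq:F1} coming from Assumption H(b) combined with the $C^{1,\alpha_1}$ bounds on $u,v$. Your write-up simply makes explicit the bookkeeping that the paper leaves implicit, including the use of $w=\partial_\nu w=0$ on $\Gamma_\varepsilon^3\cup\Gamma_\varepsilon^4$ to reduce the boundary term in part (b) to $\Gamma_\varepsilon^1\cup\Gamma_\varepsilon^2$.
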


\begin{proof}
By applying \eqref{eq:green} from Lemma \ref{lem:Green}, we obtain 

\begin{equation*}
\int_{D_{\varepsilon}} w \Delta u_0 \rmd \boldsymbol{x} - \int_{D_{\varepsilon}} u_0 \Delta w \rmd \boldsymbol{x} = \int_{\Gamma_{\varepsilon}^1 \cup \Gamma_{\varepsilon}^2} \big(w \partial_{\nu} u_0 - u_0 \partial_{\nu} w\big) \rmd \sigma.
\end{equation*}
Noting that $\Delta w + k^2 q w = k^2 (1-q)v$ from \eqref{eq:wu0}, we can naturally derive \eqref{eq:integral} and \eqref{eq:qv}. Similarly, by incorporating \eqref{eq:green} and \eqref{eq:wu1}, we can derive \eqref{eq:integral fg}.
\end{proof}

The following proposition aims to analyze the information related to $u$, while the preceding lemma offers a deeper examination of $v$. This paper primarily focuses on the properties of $v$, but the analysis of $u$ can be conducted in a manner analogous to that of $v$.
\begin{prop}\label{prop:1}
	Within the framework established by Lemma \ref{lem:integ}, the following identity holds
	\begin{equation}\notag
		\int_{\Gamma_{\varepsilon}^1\cup\Gamma_{\varepsilon}^2} \big(w \partial_{\nu} u_0 - u_0 \partial_{\nu} w\big) \rmd \sigma=k^2 \int_{D_{\varepsilon}} (q-1)u u_0 \rmd \boldsymbol{x} + k^2 \int_{D_{\varepsilon}}  w u_0 \rmd \boldsymbol{x}. 
	\end{equation}
\end{prop}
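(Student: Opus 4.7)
The plan is to recognize that Proposition~\ref{prop:1} is a companion to Lemma~\ref{lem:integ}(b): the latter gives the Green-type identity with the right-hand side expressed through $v$, and the proposition re-expresses the same quantity through $u$. Hence one only needs to transfer from the $v$-centered PDE (encoded in system \eqref{eq:wu0}, which had $w=u-v$ satisfying $\Delta w + k^2 q w = k^2(1-q)v$ with $w=0,\ \partial_\nu w=0$ on $\hat\Gamma$) to a $u$-centered version.

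First, I would derive the PDE satisfied by $w$ in terms of $u$. Since $\Delta u + k^2 q u = 0$ and $\Delta v + k^2 v = 0$, subtracting gives
\begin{equation*}
\Delta w = -k^2 q u + k^2 v = -k^2 q u + k^2(u-w) = -k^2(q-1)u - k^2 w,
\end{equation*}
i.e.\ $-\Delta w - k^2 w = k^2(q-1)u$ in $D_\varepsilon$, with the TBCs still yielding $w=0$ and $\partial_\nu w=0$ on $\Gamma_\varepsilon^3\cup\Gamma_\varepsilon^4 \subset \hat\Gamma$.

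Next, I would apply the Green formula in Lemma~\ref{lem:Green} to $w$ and the CGO solution $u_0$. Since $\Delta u_0 = 0$,
\begin{equation*}
-\int_{D_\varepsilon} u_0\, \Delta w\, \rmd\boldsymbol{x} = \int_{\partial D_\varepsilon}\big(w\,\partial_\nu u_0 - u_0\,\partial_\nu w\big)\rmd\sigma.
\end{equation*}
The boundary integral over $\Gamma_\varepsilon^3\cup\Gamma_\varepsilon^4$ vanishes by the homogeneous Cauchy data for $w$ on $\hat\Gamma$, leaving only contributions from $\Gamma_\varepsilon^1\cup\Gamma_\varepsilon^2$. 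Substituting the identity $-\Delta w = k^2(q-1)u + k^2 w$ into the left-hand side delivers
\begin{equation*}
\int_{\Gamma_\varepsilon^1\cup\Gamma_\varepsilon^2}\big(w\,\partial_\nu u_0 - u_0\,\partial_\nu w\big)\rmd\sigma = k^2\int_{D_\varepsilon}(q-1)u\, u_0\,\rmd\boldsymbol{x} + k^2\int_{D_\varepsilon} w\, u_0\,\rmd\boldsymbol{x},
\end{equation*}
which is exactly the claimed identity. Alternatively, one could reach the same formula purely algebraically from Lemma~\ref{lem:integ}(b) by writing $v=u-w$ and using $(q-1)v + qw = (q-1)u + w$, which serves as a useful cross-check.

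There is essentially no serious obstacle here; the only subtlety is ensuring the lateral boundary terms disappear, which is an immediate consequence of $w\equiv 0$ and $\partial_\nu w\equiv 0$ on $\hat\Gamma$ recorded in \eqref{eq:wu0}. The proposition's role in the broader argument is to provide a second integral identity paralleling Lemma~\ref{lem:integ}(b), so that later the pair of identities can be combined with the asymptotic analysis of the CGO solution $u_0$ under the admissibility bounds \eqref{eq:w}, ultimately controlling $|u|$ and $|v|$ in $\mathcal{N}_\varepsilon$.
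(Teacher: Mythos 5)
Your proof is correct and follows exactly the route the paper intends: the paper states Proposition \ref{prop:1} without explicit proof, but it is the direct analogue of Lemma \ref{lem:integ}(b), obtained by rewriting $\Delta w$ in terms of $u$ (using $v=u-w$) and applying the Green formula of Lemma \ref{lem:Green} with the homogeneous Cauchy data of $w$ on the lateral boundary, which is precisely what you did (and your algebraic cross-check $(q-1)v+qw=(q-1)u+w$ confirms consistency).
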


We are now ready to present the proof of Theorem \ref{thm:1} in 2D.
\begin{proof}[\textbf{Proof of Theorem \ref{thm:1} in 2D}] We will prove this theorem in two steps.

\medskip  \noindent {\bf Step I:} Prove the result established in $D_\varepsilon$. Define \begin{equation*}
		J:=F (\boldsymbol{x}_0,u(\boldsymbol{x}_0),v(\boldsymbol{x}_0)) \int_{D_{\varepsilon}} u_0( \boldsymbol{x}) h_1^{-1/2}(\boldsymbol{x}) \rmd \boldsymbol{x}.
	\end{equation*}  
 For any point $\boldsymbol{x} =(x_1,x_2) \in D_{\varepsilon}$, there exists a point $ \ (\varepsilon^m, x_2^\prime) \in \Gamma_{\varepsilon}^1$ such that
 \[	x_2 = x_2^\prime + \gamma(x_1+a-\varepsilon^m)- \gamma(a).
 \]
By applying \eqref{eq:h}, \eqref{eq:cgo} and the first mean value theorem, we obtain
\begin{align}\label{eq:I5 F}
 &\left|F (\boldsymbol{x}_0,u(\boldsymbol{x}_0),v(\boldsymbol{x}_0)) \int_{D_{\varepsilon}} e^{\rho \cdot \boldsymbol{x}} h_1^{-1/2}(\boldsymbol{x}) \rmd \boldsymbol{x}\right|\notag\\
	 \geq  & M_2 \big| F (\boldsymbol{x}_0,u(\boldsymbol{x}_0),v(\boldsymbol{x}_0))  \int_{D_{\varepsilon}} e^{s (d_1 - \bsi d_2) x_1 } e^{s (d_2 + \bsi d_1) x_2 }\rmd x_1 \rmd x_2\big|\notag\\
	 = & M_2 \left|F (\boldsymbol{x}_0,u(\boldsymbol{x}_0),v(\boldsymbol{x}_0))  \int_{\varepsilon^m}^{\varepsilon^m + \varepsilon^\ell} \int^{\gamma(a)}_{\gamma(a)-\varepsilon}  e^{s (d_1 - \bsi d_2) x_1 } e^{s  (d_2 + \bsi d_1) ( x_2^\prime + \gamma(x_1+a-\varepsilon^m)- \gamma(a)) }\rmd x_2^{\prime} \rmd x_1\right|\notag\\
	 =& M_2\varepsilon e^{-s  d_2 \gamma(a) }\left| F (\boldsymbol{x}_0,u(\boldsymbol{x}_0),v(\boldsymbol{x}_0))   \int_{\varepsilon^m}^{\varepsilon^m + \varepsilon^\ell} 
	e^{s (d_1 - \bsi d_2) x_1 }  e^{s  (d_2 + \bsi d_1) \gamma(x_1+a-\varepsilon^m) } \rmd x_1\right| \notag \\
	&\times  \left|
\int^{\frac{\gamma(a)}{\varepsilon}}_{\frac{\gamma(a)}{\varepsilon}-1} e^{s\varepsilon (d_2 + \bsi d_1) \widetilde{x}_2}\rmd \widetilde{x}_2\right| \notag\\
		 \ge & M_2 \varepsilon e^{-s  d_2 \gamma(a) } \left| F (\boldsymbol{x}_0,u(\boldsymbol{x}_0),v(\boldsymbol{x}_0))   \int_{\varepsilon^m}^{\varepsilon^m + \varepsilon^\ell} 
		e^{s (d_1 - \bsi d_2) x_1 }  e^{s  (d_2 + \bsi d_1) \gamma(x_1+a-\varepsilon^m) } \rmd x_1\right|  \notag \\
		& \times \left|   \mathfrak{R} 
		\int^{\frac{\gamma(a)}{\varepsilon}}_{\frac{\gamma(a)}{\varepsilon}-1} e^{s \varepsilon (d_2 + \bsi d_1) \widetilde{x}_2}\rmd \widetilde{x}_2\right|\notag\\
	  = & M_2 \varepsilon  e^{-s  d_2 \gamma(a) } e^{s \varepsilon d_2 \widetilde{x}_{2,\xi}} \left| \cos (s\varepsilon d_1 \widetilde{x}_{2,\xi})F (\boldsymbol{x}_0,u(\boldsymbol{x}_0),v(\boldsymbol{x}_0)) \right. \notag \\
	  &\left. \times  \int_{\varepsilon^m}^{\varepsilon^m + \varepsilon^\ell} 
	 e^{s (d_1 - \bsi d_2) x_1 }  e^{s  (d_2 + \bsi d_1) \gamma(x_1+a-\varepsilon^m) } \rmd x_1 \right|
	 \notag\\
	  = & M_2 \varepsilon  e^{-s  d_2 \gamma(a) } e^{s \varepsilon d_2 \widetilde{x}_{2,\xi}} \left| \cos (s\varepsilon d_1 \widetilde{x}_{2,\xi})F (\boldsymbol{x}_0,u(\boldsymbol{x}_0),v(\boldsymbol{x}_0))  \right|\left|\varepsilon^{\ell} + \int_{\varepsilon^m}^{\varepsilon^m + \varepsilon^\ell} A \rmd x_1 \right| \notag\\
	\ge &M_2 \varepsilon  e^{-s  d_2 \gamma(a) } e^{s \varepsilon d_2 \widetilde{x}_{2,\xi}} \left| \cos (s\varepsilon d_1 \widetilde{x}_{2,\xi})F (\boldsymbol{x}_0,u(\boldsymbol{x}_0),v(\boldsymbol{x}_0))   \right|\bigg(\varepsilon^{\ell} - \Big| \int_{\varepsilon^m}^{\varepsilon^m + \varepsilon^\ell} A \rmd x_1 \Big|\bigg),
\end{align}
where $\Re(\cdot)$ denotes the real part of $(\cdot)$, $\widetilde{x}_{2,\,\xi} \in \left(\frac{\gamma(a)}{\varepsilon}-1,\frac{\gamma(a)}{\varepsilon}\right)$, and
\begin{align}\label{eq:A}
	A= \sum\limits_{n=1}^{\infty} \frac{s^n \Big[(d_1 - \bsi d_2) x_1 + (d_2+\bsi d_1)\gamma(x_1+a-\varepsilon^m) \Big]^n} {n!}.
\end{align}
For any point $\boldsymbol{x} = (x_1, x_2) \in D_{\varepsilon}$ and point $\boldsymbol{x}_0 = (x_{01}, x_{02}) \in \overline{D}_{\varepsilon}$, applying the geometric setup described in \eqref{eq:mini}, we find
\begin{equation}\label{eq:x0}
	|\boldsymbol{x} -\boldsymbol{x}_0|^{\zeta}=
	\left(|x_1 - x_{01}|^2+ |x_2-x_{02}|^2\right)^{\frac{\zeta}{2}} \leq \varepsilon^{\ell \zeta} \big[1+ \Oh(\varepsilon^{2-2\ell})\big]^{\frac{\zeta}{2}}, 
\end{equation}
based on $|x_2 - x_{02}| \leq \varepsilon$ and $\zeta < 1$. Furthermore, by combining \eqref{eq:h}, \eqref{eq:condition}, \eqref{eq:integral fg}, \eqref{eq:F}, \eqref{eq:F1}, and \eqref{eq:x0}, we obtain
\begin{align}\label{eq:IF}
	|J_F|& = \left|\int_{D_{\varepsilon}} (F_1+F_2+F_3) u_0 (\boldsymbol{x}) h_1^{-1/2}(\boldsymbol{x})\rmd \boldsymbol{x}\right|\notag\\
	& \leq  M_1 \int_{D_{\varepsilon}} \left|\tilde{\delta} f_{\boldsymbol{x}} - \tilde{\delta} g_{\boldsymbol{x}} + \tilde{\delta} f_u - \tilde{\delta} g_u + \tilde{\delta} f_v - \tilde{\delta} g_v \right| e^{-s \delta |\boldsymbol{x}|}\rmd \boldsymbol{x}\notag\\ 
	&\leq C(M_1,C_1,C_2) \int_{D_{\varepsilon}} \left(|\boldsymbol{x}- \boldsymbol{x}_0 |^{\zeta}+ |\boldsymbol{x}- \boldsymbol{x}_0 |^{\zeta \alpha_1}  \right) e^{-s \delta |\boldsymbol{x}|}\rmd \boldsymbol{x}\notag\\ 
	&\leq  C(M_1,C_1,C_2) \bigg[\varepsilon^{\zeta \ell}  \big(1+ \Oh(\varepsilon^{2-2\ell})\big)^{\frac{\zeta}{2}} +\varepsilon^{\zeta \alpha_1 \ell}  \big(1+ \Oh(\varepsilon^{2-2\ell})\big)^{\frac{\zeta \alpha_1}{2}} \bigg] \int_{D_{\varepsilon}}  e^{- s \delta |\boldsymbol{x}|} \rmd \boldsymbol{x} \notag\\
	&\leq  C(M_1,C_1,C_2) \bigg[\varepsilon^{\zeta \ell}  \big(1+ \Oh(\varepsilon^{2-2\ell})\big)^{\frac{\zeta}{2}} +\varepsilon^{\zeta \alpha_1 \ell}  \big(1+ \Oh(\varepsilon^{2-2\ell})\big)^{\frac{\zeta \alpha_1}{2}} \bigg] \int_{\theta_1}^{\theta_2} \int_{\varepsilon^m /2}^{\infty} r e^{- s \delta r} \rmd r \rmd \theta \notag\\
	&\leq  C(M_1,C_1,C_2) \bigg[\varepsilon^{\zeta \ell}  \big(1+ \Oh(\varepsilon^{2-2\ell})\big)^{\frac{\zeta}{2}} +\varepsilon^{\zeta \alpha_1 \ell}  \big(1+ \Oh(\varepsilon^{2-2\ell})\big)^{\frac{\zeta \alpha_1}{2}} \bigg]  \int_{\varepsilon^m s \delta/2}^{\infty} \frac{t e^{-t}}{s^2 \delta^2}   \rmd t \notag \\
	&=  C(M_1,C_1,C_2) \bigg[\frac{\varepsilon^{\zeta \ell}}{s^2} \big(1+ \Oh(\varepsilon^{2-2\ell})\big)^{\frac{\zeta}{2}} + \frac{\varepsilon^{\zeta \alpha_1 \ell}}{s^2}  \big(1+ \Oh(\varepsilon^{2-2\ell})\big)^{\frac{\zeta \alpha_1}{2}} \bigg]  \Gamma(2,\varepsilon^m s \delta/2)  \notag \\
	&\leq  C(M_1,C_1,C_2) \bigg[\frac{\varepsilon^{\zeta \ell}}{s^2}  \big(1+ \Oh(\varepsilon^{2-2\ell})\big)^{\frac{\zeta}{2}} + \frac{\varepsilon^{\zeta \alpha_1 \ell}}{s^2}  \big(1+ \Oh(\varepsilon^{2-2\ell})\big)^{\frac{\zeta \alpha_1}{2}} \bigg]
	e^{- \varepsilon^m s \delta / 4}.
\end{align}
It is important to note that in the last inequality of \eqref{eq:IF}, we utilize the incomplete gamma functions $\gamma_0$ and $\Gamma_0: \mathbb{R}_+ \times \mathbb{R}_+ \to \mathbb{R}$ defined as follows:
\begin{equation}\label{eq:gamma0}
	\gamma_0(s,x)=\int_{0}^{x} e^{-t} t^{s-1} \rmd t \quad\mbox{and}\quad \Gamma_0(s,x)=\int_{x}^{\infty}  e^{-t} t^{s-1} \rmd t
\end{equation}
which satisfy $\gamma_0(s, x) \leq \Gamma_0(s) \leq (s-1)!$ and $\Gamma_0(s, x) \leq 2^s \Gamma(s) e^{-x/2}$. Here, $\Gamma(s)$ represents the ordinary complete gamma function.

Subsequently, by combining \eqref{eq:h}, \eqref{eq:fg1}, \eqref{eq:condition}, and \eqref{eq:g}, we obtain
\begin{align}\label{eq:Jg}
	|J_g|&\leq \int_{ D_{\varepsilon}} \bigg|\frac{h_2- h_1 }{h_1^{1/2}h_2+h_1 h_2^{1/2}} (\boldsymbol{x}) u_0(\boldsymbol{x}) g(\boldsymbol{x}, u(\boldsymbol{x}),v(\boldsymbol{x}))\bigg|\rmd \boldsymbol{x}\notag\\
	&\leq C(M_1,C_2) \varepsilon^{\alpha_3} \int_{D_{\varepsilon}}  e^{- s \delta |\boldsymbol{x}|} \rmd \boldsymbol{x} \notag\\
	&\leq C(M_1,C_2) \frac{\varepsilon^{\alpha_3}}{s^2} e^{- \varepsilon^m s \delta / 4}.
\end{align}
Similarly, through appropriate simplification, we can obtain the following estimate of $J_{\tilde{v}}$ as denoted in \eqref{eq:g}, i.e.,
\begin{align}\label{eq:Jv}
	|J_{\tilde{v}}|
	&\leq C(M_1,C_1) (\varepsilon^{\alpha_3}+ \varepsilon^{\alpha_2}) \int_{D_{\varepsilon}}  e^{- s \delta |\boldsymbol{x}|} \rmd \boldsymbol{x} \leq C(M_1,C_1) \frac{\varepsilon^{\alpha_3}+ \varepsilon^{\alpha_2}}{s^2}\, e^{- \varepsilon^m s \delta / 4}.
\end{align}
After that, by combining \eqref{eq:h}, \eqref{eq:w1}, \eqref{eq:condition}, and \eqref{eq:g}, we deduce that
\begin{align}\label{eq:Jwt}
|J_{\tilde{w}}| & \leq \int_{ D_{\varepsilon}} \left|\bigg[\frac{1}{4} h_1^{-2} |\nabla h_1|^2 -\frac{1}{2} h_1^{-1} \Delta h_1 \bigg] u_0(\boldsymbol{x}) \tilde{w}(\boldsymbol{x})\right|\rmd \boldsymbol{x}\notag\\
& \leq C(M_1,C_1) (\varepsilon^{\alpha_3} + \varepsilon^{1+\alpha_1}) \int_{D_{\varepsilon}}  e^{- s \delta |\boldsymbol{x}|} \rmd \boldsymbol{x}\notag\\
& \leq C(M_1,C_1) \frac{\varepsilon^{\alpha_3} + \varepsilon^{1+\alpha_1}}{s^2} e^{- \varepsilon^m s \delta / 4}.
\end{align}
Next, given that $\Gamma_{\varepsilon}^3$ and $\Gamma_{\varepsilon}^4$ represent the actual boundaries, and by using \eqref{eq:ap1}, \eqref{eq:h}, and the boundary conditions \eqref{eq:wu1}, it is straightforward to obtain the estimates as follows
	\begin{align}\label{eq:lateral boundary}
		\|\partial_{\nu}\tilde{w}\|_{L^{\infty}(\Gamma_{\varepsilon}^3\cup \Gamma_{\varepsilon}^4)}&= \bigg\|\frac{(h_2- h_1) \partial_{\nu}\tilde{v} }{h_1^{1/2}(h_1^{1/2}+h_2^{1/2})}- \frac{1}{2}h_1^{-1/2} h_2^{-1/2} (\nabla h_2-\nabla h_1)\cdot \nu \,\tilde{v}\bigg\|_{L^{\infty}(\Gamma_{\varepsilon}^3\cup \Gamma_{\varepsilon}^4)}\leq C(M_1,C_1) \varepsilon^{\alpha_3},\notag\\
		\|\tilde{w}\|_{L^{\infty}(\Gamma_{\varepsilon}^3\cup \Gamma_{\varepsilon}^4)} &=\bigg\|\frac{h_1- h_2 }{h_1^{1/2}h_2^{1/2}+h_2}\tilde{v}\bigg\|_{L^{\infty}(\Gamma_{\varepsilon}^3\cup \Gamma_{\varepsilon}^4)}\leq C(M_1,C_1) \varepsilon^{\alpha_3}.
	\end{align}
 Furthermore, based on the definition of $\Gamma_{\varepsilon}^1$ in \eqref{eq:Gamma12} and by applying the first mean value theorem, we deduce that
\begin{align}\label{eq:Gamma 1}
	\|u_0\|_{L^1(\Gamma_{\varepsilon}^1)} = \int_{\gamma(a) - \varepsilon}^{\gamma(a)} e^{s d_1 \varepsilon^m} e^{s d_2 x_2}\rmd x_2 = \varepsilon e^{s d_1 \varepsilon^m} \int_{\frac{\gamma(a)}{\varepsilon} -1}^{\frac{\gamma(a)}{\varepsilon}} e^{s \varepsilon d_2 \widetilde{x}_{2}}\rmd \widetilde{x}_2 =  \varepsilon e^{sd_1 \varepsilon^m} e^{s \varepsilon d_2 \widetilde{x}_{2,\xi_1}} ,
\end{align}
where $\widetilde{x}_{2,\,\xi_1} \in \left(\frac{\gamma(a)}{\varepsilon}-1,\frac{\gamma(a)}{\varepsilon}\right)$ is a constant independent of $\varepsilon$, and $\boldsymbol{d} = (d_1, d_2)^\top$ is defined in \eqref{eq:phi}. Meanwhile, we derive that
\begin{align}\label{eq:Gamma 3}
	\|u_0\|_{L^1(\Gamma_{\varepsilon}^3)} &= \int_{\varepsilon^m}^{\varepsilon^m + \varepsilon^{\ell}} e^{s d_1 x_1} e^{s d_2 \gamma(x_1+a-\varepsilon^m)} \sqrt{1+ \gamma^{\prime}(x_1+a-\varepsilon^m)^2}\rmd x_1 \notag\\ & \leq C e^{s d_2 \gamma(x_{1,\xi_1}+a-\varepsilon^m)} e^{sd_1 \varepsilon^m} \frac{|e^{sd_1 \varepsilon^{\ell}}-1|}{s},
\end{align}
where $\widetilde{x}_{1,\,\xi_1} \in \left(\varepsilon^m,\varepsilon^m+\varepsilon^{\ell}\right)$. Hence, by combining \eqref{eq:w1}, \eqref{eq:lateral boundary}--\eqref{eq:Gamma 3}, we find
\begin{align}\label{eq:b e}
	J_b  & \leq \|\tilde{w}\|_{L^{\infty}(\partial D_{\varepsilon})} \|\nabla u_0\|_{L^{1}(\partial D_{\varepsilon})} + \|\nabla \tilde{w}\|_{L^{\infty}(\partial D_{\varepsilon})} \| u_0\|_{L^{1}(\partial D_{\varepsilon})}  \notag\\[2mm]
	& \leq C(M_1,C_1)\big[(\varepsilon^{\alpha_3} +\varepsilon^{1+\alpha_1}) s + \varepsilon^{\alpha_1} \big] \| u_0\|_{L^{1}(\Gamma_{\varepsilon}^{1} \cup \Gamma_{\varepsilon}^{2})} + C(M_1,C_1)\big(\varepsilon^{\alpha_3} s + \varepsilon^{\alpha_3} \big) \| u_0\|_{L^{1}(\Gamma_{\varepsilon}^{3} \cup \Gamma_{\varepsilon}^{4})} \notag\\[2mm]
	& \leq C(M_1,C_1) \big[(\varepsilon^{1+\alpha_3} +\varepsilon^{2+\alpha_1}) s + \varepsilon^{1+\alpha_1} \big]  e^{ s d_1 \varepsilon^m} \big(e^{s \varepsilon d_2 \widetilde{x}_{2,\xi_1}} + e^{ s d_1 \varepsilon^{\ell} } e^{s \varepsilon d_2 \widetilde{x}_{2,\xi_2}}\big)\notag\\& \quad + C(M_1,C_1) \big(\varepsilon^{\alpha_3} s  + \varepsilon^{\alpha_3} \big) \frac{|e^{sd_1 \varepsilon^{\ell}}-1|}{s}  e^{ s d_1 \varepsilon^m} \big(e^{s  d_2 \gamma(x_{1,\xi_1}+a-\varepsilon^m)} +  e^{s d_2 \left[\gamma(x_{1,\xi_2}+a-\varepsilon^m)-\varepsilon\right]}\big),
\end{align}
where $\widetilde{x}_{2,\xi_2} \in \left(\frac{\gamma(a+\varepsilon^{\ell})}{\varepsilon}-1,\frac{\gamma(a+\varepsilon^{\ell})}{\varepsilon}\right)$ is a constant independent of $\varepsilon$, and $\widetilde{x}_{1,\xi_2} \in \left(\varepsilon^m, \varepsilon^m+\varepsilon^{\ell}\right)$.
Additionally, by virtue of Lemma \ref{lem:integ}, the integral equation \eqref{eq:integral fg} can be rewritten as
\begin{equation}\label{eq:int 1F}
	|J|=\big|-J_F-J_g -J_{\tilde{v}} - J_{\tilde{w}} + J_b\big|.
\end{equation}
Take $s = \varepsilon^{\beta}$ with $\beta \in (-1, -\frac{1}{2})$ satisfying the conditions
\begin{equation}\label{eq:beta1F}
1 + (1 - \zeta \alpha_1)\ell < -2\beta,\ \ \beta + \ell > 0,\ \ \mbox{and} \ \ \beta + m > 0,
\end{equation}
where $\alpha_1$ and $ \zeta$ are defined in \eqref{eq:ap1} and \eqref{eq:fg1}, respectively. The parameters $\ell$ and $m$ related to $D_{\varepsilon}$ are defined in \eqref{eq:mini}. By substituting \eqref{eq:beta1F} into \eqref{eq:A}, recalling \eqref{eq:gamm}, \eqref{eq:phi}, and applying properties of a geometric sequence, we find that the estimates hold for $\varepsilon \ll 1$ and $n \to \infty$
\begin{align}
	|e^{sd_1 \varepsilon^{\ell}}-1|&\leq \sum\limits_{n=1}^{\infty}  (s \varepsilon^\ell)^n \sim s \varepsilon^\ell ,\ \	|A| \leq   \sum\limits_{n=1}^{\infty}    s^n (\varepsilon^m +\varepsilon^\ell + \Oh(\varepsilon))^n \sim s (\varepsilon^m + \varepsilon^\ell + \Oh(\varepsilon)), \label{eq:1A} 
\end{align}
where the symbol $``\sim"$ represents the equivalence relation. Namely, there exist two positive constants $N_1$ and $N_2$ satisfying $N_1\,s \varepsilon^\ell\leq\sum\limits_{n=1}^{\infty}  (s \varepsilon^\ell)^n\leq N_2\,s \varepsilon^\ell$. Then, by substituting \eqref{eq:gamm}, \eqref{eq:I5 F}, \eqref{eq:IF}, \eqref{eq:Jg}--\eqref{eq:Jwt}, \eqref{eq:b e}, \eqref{eq:beta1F}, and \eqref{eq:1A} into \eqref{eq:int 1F}, we find that
	\begin{align}\label{eq:int 3F}
	&| F (\boldsymbol{x}_0,u(\boldsymbol{x}_0),v(\boldsymbol{x}_0))| \varepsilon^{\ell+1} \notag\\
	\leq &  \frac{ e^{s d_2 \gamma(a)} e^{-s \varepsilon d_2 \widetilde{x}_{2,\xi}}  }{M_2 \big| \cos (s\varepsilon d_1 \widetilde{x}_{2,\xi})  \big| } \left| -J_F-J_g -J_{\tilde{v}} - J_{\tilde{w}} + J_b \right| + \varepsilon \left|F (\boldsymbol{x}_0,u(\boldsymbol{x}_0),v(\boldsymbol{x}_0))\right| \int_{\varepsilon^m}^{\varepsilon^m+\varepsilon^\ell}\left|A\right| \rmd x_1 \notag\\
	\leq &  \frac{ e^{s d_2 \gamma(a)} e^{-s \varepsilon d_2 \widetilde{x}_{2,\xi}}  }{M_2 \Big| \cos (s\varepsilon d_1 \widetilde{x}_{2,\xi})  \big| } \bigg\{ 
	 C(M_1,C_1,C_2) \bigg[\frac{\varepsilon^{\zeta \ell}}{s^2}  \big(1+ \Oh(\varepsilon^{2-2\ell})\big)^{\frac{\zeta}{2}} + \frac{\varepsilon^{\zeta \alpha_1 \ell}}{s^2}  \big(1+ \Oh(\varepsilon^{2-2\ell})\big)^{\frac{\zeta \alpha_1}{2}} \bigg]
	e^{- \varepsilon^m s \delta / 4}
	\notag \\
	& + C(M_1,C_2) \frac{\varepsilon^{\alpha_3}}{s^2} e^{- \varepsilon^m s \delta / 4}
	 + C(M_1,C_1) \frac{\varepsilon^{\alpha_3}+ \varepsilon^{\alpha_2}}{s^2} e^{- \varepsilon^m s \delta / 4} + C(M_1,C_1) \frac{\varepsilon^{\alpha_3} + \varepsilon^{1+\alpha_1}}{s^2} e^{- \varepsilon^m s \delta / 4}\notag\\
	&+  C(M_1,C_1) \Big[(\varepsilon^{1+\alpha_3} +\varepsilon^{2+\alpha_1}) s + \varepsilon^{1+\alpha_1} \Big]  e^{ s d_1 \varepsilon^m} \Big(e^{s \varepsilon d_2 \widetilde{x}_{2,\xi_1}} + e^{ s d_1 \varepsilon^{\ell} } e^{s \varepsilon d_2 \widetilde{x}_{2,\xi_2}}\Big)\notag\\
	 &+ C(M_1,C_1) \Big(\varepsilon^{\alpha_3} s  + \varepsilon^{\alpha_3} \Big) \frac{|e^{sd_1 \varepsilon^{\ell}}-1|}{s}  e^{ s d_1 \varepsilon^m} \Big(e^{s  d_2 \gamma(x_{1,\xi_1}+a-\varepsilon^m)} +  e^{s d_2 \left[\gamma(x_{1,\xi_2}+a-\varepsilon^m)-\varepsilon\right]}\Big)\bigg\}\notag\\
	 & + | F (\boldsymbol{x}_0,u(\boldsymbol{x}_0),v(\boldsymbol{x}_0))| \varepsilon^{\ell+1} s \big[\varepsilon^m + \varepsilon^{\ell}+ \Oh(\varepsilon)\big]\notag\\
	\leq & C(\varepsilon_0,M_1,M_2,C_1,C_2)  e^{  \varepsilon^{\beta} d_2 \gamma(a)} e^{-\varepsilon^{\beta+1} d_2 \widetilde{x}_{2,\xi}} \bigg\{\varepsilon^{\zeta \alpha_1 \ell - 2\beta}  \big[1+ \Oh(\varepsilon^{2-2\ell})\big]^{\frac{\zeta \alpha_1}{2}} e^{- \varepsilon^{\beta+m} \delta / 4} \notag\\
	& + \varepsilon^{\alpha_2-2 \beta} 
	 	e^{- \varepsilon^{\beta+m} \delta / 4} + \varepsilon^{1+\alpha_1}  
	 	e^{ \varepsilon^{\beta+m} d_1 } \Big(e^{\varepsilon^{\beta+1} d_2 \widetilde{x}_{2,\xi_1}} + e^{   \varepsilon^{\beta + \ell} d_1} e^{ \varepsilon^{\beta+1} d_2 \widetilde{x}_{2,\xi_2}}\Big)
	 	\bigg\} \notag\\
	 	&+ \varepsilon^{\ell+1} \varepsilon^{\beta} \big[\varepsilon^m + \varepsilon^{\ell}+ \Oh(\varepsilon)\big]. 
\end{align} 
By combining \eqref{eq:mini}, \eqref{eq:beta1F}, and the assumption $\alpha_1 \leq \alpha_2$ in Theorem \ref{thm:1}, we deduce that
\begin{equation}\label{eq:cF}
	(\zeta \alpha_1-1)\ell - 2 \beta -1>0, \ \alpha_2-2 \beta -\ell -1 >0,
\end{equation}
and
\begin{equation}\label{eq:lF}
	\alpha_1 -\ell\in(0,1),\ \beta + \ell \in(0,1), \ \beta +1 \in (0,1).
\end{equation}
Therefore, the expression in \eqref{eq:int 3F} can be transformed into the following form:
\begin{align}\notag
	|F (\boldsymbol{x}_0,u(\boldsymbol{x}_0),v(\boldsymbol{x}_0))| \leq C(\varepsilon_0,M_1,M_2,C_1,C_2) \Big(&  \varepsilon^{(\zeta \alpha_1 -1)\ell-2\beta -1} + \varepsilon^{\alpha_1-\ell}\Big) + \varepsilon^{\beta}\left(\varepsilon^m + \varepsilon^\ell + \Oh(\varepsilon)\right).
\end{align}
Moreover, taking $\beta=-\frac{2}{3},\,m=\ell=\frac{7}{9}$, we define \begin{equation}\label{eq:tau2} \tau=\min\Big\{\frac{7}{9}\zeta \alpha_1 - \frac{4}{9},\,\,\alpha_1-\frac{7}{9},\,\,\frac{1}{9},\,\,\frac{1}{3}\Big\}.\end{equation}
From \eqref{eq:cF} and \eqref{eq:lF}, it follows that $\tau \in (0, 1)$. For example, when $\zeta=\frac{6}{7}$ and $\alpha_1=\frac{5}{6}$, conditions \eqref{eq:mini} and \eqref{eq:beta1F} are satisfied, resulting in $\tau=\frac{1}{18}$.

\medskip  \noindent {\bf Step II:} Prove the result established in the remaining portion of $D_\varepsilon$.

We note that the geometric configuration after translation remains identical to our initial setup. We will analyze the remaining portion of $\mathcal{N_{\varepsilon}}$ by employing translations of the region $\mathcal{N_{\varepsilon}}$. To implement a method similar to that used for processing $\widetilde{D}_{\varepsilon}$, we simply need to shift the region $\mathcal{N}_{\varepsilon}$ by a certain number of units horizontally to the left or right. It is observed that by employing a localized analysis, we can apply this method repeatedly.

The proof is complete.
\end{proof}

\section{Proof of Theorem \ref{thm:1} in 3D}\label{sec:4}

In a manner similar to the 2D case, we provide a detailed proof of Theorem \ref{thm:1} in the 3D context. For any subregion of $\mathcal{N}_\varepsilon$, define
\begin{align*}
\widetilde{D}_{\varepsilon}:=\Omega_\varepsilon \times \eta(x_2)  \subset \mathcal{N}_\varepsilon, \ x_2 \in (a, a+\varepsilon^{\ell}) \subset I, \ \ \mbox{with} \ \ \ell<1, 
\end{align*}
and the lateral boundary of $\widetilde{D}_{\varepsilon}$, i.e.,
$$
 \widetilde{\Gamma}_{\varepsilon} := \partial \Omega_\varepsilon \times \eta(x_2), 
 $$
where $I$ is defined in \eqref{eq:n1}, the cross-section $\Omega_\varepsilon$ is a bounded, simply-connected Lipschitz domain in $\mathbb{R}^2$, $\partial \Omega_\varepsilon$ is piecewise smooth, and $a > 0$. 

This section is organized as follows: Subsection \ref{T3D} proves the thin end case, while Subsection \ref{T3N} addresses the narrow end configuration.

\subsection{Thin ends} \label{T3D} Recall that the diameter of $\Omega_{\varepsilon}$ is $\varepsilon$, as mentioned in Section \ref{sub:main results}. Thus, the boundary of $\widetilde{D}_{\varepsilon}\subset \mathcal{N}_{\varepsilon}$ is given by
 $$\partial \widetilde{D}_{\varepsilon} =  \widetilde{\Gamma}_{\varepsilon} \cup \widetilde{\Omega}_{\varepsilon} \cup \widetilde{\Omega}^{\prime}_{\varepsilon},$$
 where
\begin{align}
	\widetilde{\Omega}_{\varepsilon} &=\big\{(x_1,x_2,x_3) \Big|  x_{1,\min}=-\frac{\varepsilon}{2},\, x_{1,\max}=\frac{\varepsilon}{2},\ x_2=a, \ x_{3,\min}=\gamma(a)-\varepsilon,\, x_{3,\max}=\gamma(a)\big\},\notag\\
   \widetilde{\Omega}^{\prime}_{\varepsilon}   &=\big\{(x_1,x_2,x_3)\Big| x_{1,\min}=-\frac{\varepsilon}{2},\, x_{1,\max}=\frac{\varepsilon}{2},\ x_2=a+\varepsilon^{\ell}, \notag \\ & \hspace{7.1cm}x_{3,\min}=\gamma(a+\varepsilon^{\ell})-\varepsilon,\  x_{3,\max}=\gamma(a+\varepsilon^{\ell})\big\}.\notag
\end{align} 

\begin{figure}[htbp]
	\centering
	\includegraphics[scale=0.8]{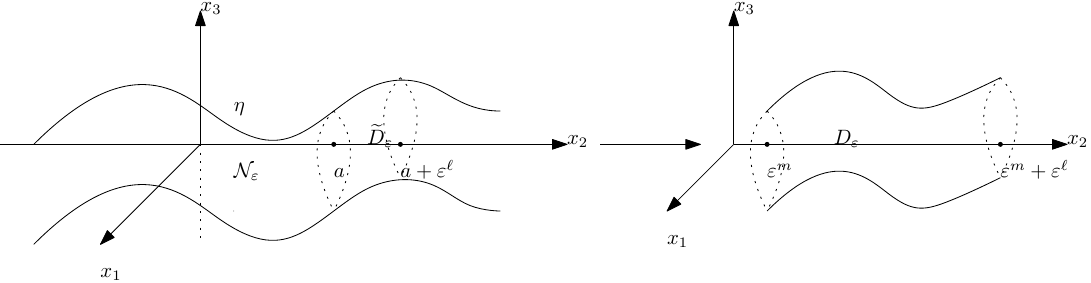}
	\caption{Schematic illustration of the nozzle in 3D}
	\label{fig:2}
\end{figure}

Then we will further shift the entire region $\mathcal{N_{\varepsilon}}$ to the left by $a-\varepsilon^m$ units, as our approach is focused on localized analysis. Thus, the above region $\widetilde{D}_{\varepsilon}$ can be expressed as

 \begin{align}
 	D_{\varepsilon}& :=\Omega_\varepsilon \times \eta(x_2), \ x_2 \in ( \varepsilon^m, \,\varepsilon^m +  \varepsilon^\ell)\subset (-L-a+\varepsilon^m,\,\,L-a+ \varepsilon^m),
 	\label{eq:tube}
 \end{align}
 and 
  \begin{equation}\label{eq:lat}
  \partial D_{\varepsilon} =   \Gamma_{\varepsilon} \cup \Omega_{\varepsilon} \cup \Omega_{\varepsilon}^{\prime}\ \ \mbox{with} \ \ \Gamma_{\varepsilon} = \partial \Omega_\varepsilon \times \eta(x_2),
\end{equation}
  where
\begin{align}
 	\Omega_{\varepsilon} &:=\{(x_1,x_2,x_3)\mid  x_{1,\min}=-\frac{\varepsilon}{2},\, x_{1,\max}=\frac{\varepsilon}{2},\,x_2= \varepsilon^m,\,x_{3,\min}=\gamma(a)-\varepsilon,\,x_{3,\max}=\gamma(a)\},\notag\\
 		\Omega_{\varepsilon}^{\prime} &:=\{(x_1,x_2,x_3)\mid   x_{1,\min}=-\frac{\varepsilon}{2}, \,x_{1,\max}=\frac{\varepsilon}{2},\,x_2=\varepsilon^m + \varepsilon^{\ell}, \notag\\ &\hspace{7.25cm}x_{3,\min}=\gamma(a+\varepsilon^{\ell})-\varepsilon,x_{3,\max}=\gamma(a+\varepsilon^{\ell})\}.\label{eq:Gamma123}
 \end{align}
 Here, $\gamma$ satisfies \eqref{eq:gamm} as we are only considering the horizontal translation of $\widetilde{D}_{\varepsilon}$. And the parameters in \eqref{eq:tube} satisfy
 \begin{align}\label{eq:lm3}
  \frac{2}{3}<m \ \ \mbox{and}\ \ \frac{2}{3}<\ell <\alpha_1.
\end{align} 
We refer to Figure \ref{fig:2} for a schematic illustration of $\widetilde{D}_{\varepsilon}$ and $D_{\varepsilon}$.

Additionally, choose $\boldsymbol{d}$ and $\boldsymbol{d}^{\top}$ in \eqref{eq:cgo} as follows:
\begin{equation}\label{eq:3d}
\boldsymbol{d}:=(d_1,d_2,d_3)^{\top} \in \mathbb{S}^2 \ \mbox{and} \ \boldsymbol{d}^{\top}:=(-d_2,d_1, 0)^{\top}
\end{equation}
such that
\begin{equation} \label{eq:condition3}
\boldsymbol{d} \cdot \boldsymbol{x} = \boldsymbol{d} \cdot \hat{\boldsymbol{x}} |\boldsymbol{x}| \leq -\delta |\boldsymbol{x}| < 0, \ \ \forall \ \boldsymbol{x} \in D_{\varepsilon},
\end{equation}
where $0 < \delta \leq 1$ is a constant depending on $D_{\varepsilon}$ and $\boldsymbol{d}$. Throughout this section, we consistently assume that the unit vector $\boldsymbol{d}$ in the form of the CGO solution, as given by \eqref{eq:cgo}, satisfies \eqref{eq:condition3}.

\begin{proof}[\textbf{ Proof of the Theorem \ref{thm:1} in 3D with thin ends}] We prove this theorem in two steps.

\medskip  \noindent {\bf Step I:} Prove the result established in $D_\varepsilon$. Define
\begin{equation}\label{eq:I56 3F}
\widetilde{J} := F(\boldsymbol{x}_0, u(\boldsymbol{x}_0), v(\boldsymbol{x}_0)) \int_{D_{\varepsilon}} u_0(\boldsymbol{x})\, h_1^{-1/2}(\boldsymbol{x}) \rmd \boldsymbol{x}.
\end{equation}
Then, for any point $\boldsymbol{x} = (x_1, x_2, x_3) \in D_{\varepsilon}$, there exists $\boldsymbol{x}^{\prime} = (x_1^{\prime}, \varepsilon^m, x_3^{\prime}) \in \Omega_{\varepsilon}$ such that the following relationship holds between them
	\[\left\{
	\begin{array}{ll}
		x_1 = x_1^{\prime},\\
		x_3 = x_3^\prime + \gamma(x_2+a-\varepsilon^m)- \gamma(a).
	\end{array}\right.
	\]
	 Then, applying \eqref{eq:h}, \eqref{eq:cgo}, and the first mean value theorem, one has
\begin{align}\label{eq:I5FF 3}
		& \left| F (\boldsymbol{x}_0,u(\boldsymbol{x}_0),v(\boldsymbol{x}_0)) \int_{D_{\varepsilon}} e^{\rho \cdot \boldsymbol{x}}   h_1^{-1/2}(\boldsymbol{x})\rmd \boldsymbol{x}\right|\notag\\
		 \ge  & M_2 \left| F (\boldsymbol{x}_0,u(\boldsymbol{x}_0),v(\boldsymbol{x}_0)) \int_{D_{\varepsilon}} e^{s (d_1 - \bsi d_2) x_1 } e^{s (d_2 + \bsi d_1) x_2 } e^{s d_3 x_3}  \rmd \boldsymbol{x} \right|\notag\\
		 =  & M_2 \left|F (\boldsymbol{x}_0,u(\boldsymbol{x}_0),v(\boldsymbol{x}_0)) \int_{\varepsilon^m}^{\varepsilon^m + \varepsilon^\ell}\int_{\Omega_{\varepsilon}} e^{s (d_1 - \bsi d_2) x_1^{\prime} } e^{s  (d_2 + \bsi d_1) x_2 } e^{s d_3 ( x_3^\prime + \gamma(x_2 +a-\varepsilon^m)- \gamma(a))}\rmd \sigma^{\prime} \rmd x_2\right|\notag\\
		 =  & \left|\int_{\varepsilon^m}^{\varepsilon^m + \varepsilon^\ell}
		e^{s (d_2 + \bsi d_1) x_2 }  e^{s d_3  \gamma(x_2+a-\varepsilon^m) } \rmd x_2
		\int_{\Omega^0}  e^{s \varepsilon (d_1 - \bsi d_2) \widetilde{x}_1} e^{s \varepsilon d_3 \widetilde{x}_3} \rmd \widetilde{\sigma}\right| \notag\\
        &\times M_2 \,\varepsilon^2  e^{-s \gamma(a) d_3 } | F (\boldsymbol{x}_0,u(\boldsymbol{x}_0),v(\boldsymbol{x}_0)) |\notag \\
		 \ge &  M_2 \,\varepsilon^2  e^{-s \gamma(a) d_3 }  \left| F (\boldsymbol{x}_0,u(\boldsymbol{x}_0),v(\boldsymbol{x}_0))  \int_{\varepsilon^m}^{\varepsilon^m + \varepsilon^\ell}
		e^{s (d_2 + \bsi d_1) x_2 }  e^{s d_3 \gamma(x_2+a-\varepsilon^m) } \rmd x_2\right| \notag \\
&\times\left| \mathfrak{R} \int_{\Omega^0} e^{s \varepsilon (d_1 - \bsi d_2) \widetilde{x}_1} e^{s \varepsilon d_3 \widetilde{x}_3} \rmd \widetilde{\sigma} \right|
		\notag\\
		 =  & \left| \cos(s\varepsilon d_2 \tilde{x}_{1,\xi}) F (\boldsymbol{x}_0,u(\boldsymbol{x}_0),v(\boldsymbol{x}_0)) \int_{\varepsilon^m}^{\varepsilon^m + \varepsilon^\ell}
		e^{s (d_2 + \bsi d_1) x_2 }  e^{s d_3 \gamma(x_2+a-\varepsilon^m) } \rmd x_2\right|
		\notag\\
&\times  M_2 \,\varepsilon^2 e^{-s \gamma(a) d_3 } e^{s \varepsilon d_1  \tilde{x}_{1,\xi}} e^{s \varepsilon d_3 \tilde{x}_{3,\xi}} \big|\Omega^0\big| \notag\\
		  =  & M_2 \,\varepsilon^2 e^{-s \gamma(a) d_3 } e^{s \varepsilon d_1  \tilde{x}_{1,\xi}} e^{s \varepsilon d_3 \tilde{x}_{3,\xi}} \big|\Omega^0\big|\big| \cos(s\varepsilon d_2 \tilde{x}_{1,\xi}) F (\boldsymbol{x}_0,u(\boldsymbol{x}_0),v(\boldsymbol{x}_0))\big| \Big|  \varepsilon^{\ell}+\int_{\varepsilon^m}^{\varepsilon^m + \varepsilon^\ell} B
		\rmd x_2 \Big|\notag\\
		\ge & M_2 \,\varepsilon^2 e^{-s \gamma(a) d_3 } e^{s \varepsilon d_1  \tilde{x}_{1,\xi}} e^{s \varepsilon d_3 \tilde{x}_{3,\xi}} \big|\Omega^0\big| \big| \cos(s\varepsilon d_2 \tilde{x}_{1,\xi}) F (\boldsymbol{x}_0,u(\boldsymbol{x}_0),v(\boldsymbol{x}_0)) \big|  \bigg(\varepsilon^{\ell}- \Big|\int_{\varepsilon^m}^{\varepsilon^m + \varepsilon^\ell} B
		\rmd x_2 \Big| \bigg),
	\end{align}
    where $\Omega_{\varepsilon}:= \left\{(\varepsilon \tilde{x}_1,\varepsilon^m,\varepsilon \tilde{x}_3) \mid (\tilde{x}_1,\tilde{x}_3) \in \Omega^0 \right\}$ with  $\Omega^0$ is a bounded Lipschitz domain independent of $\varepsilon$. Here $\left(\tilde{x}_{1,\,\xi},\tilde{x}_{3,\,\xi}\right)\in \Omega^0$ and
	\begin{align}\label{eq:B}
		B= \sum\limits_{n=1}^{\infty} \frac{s^n\Big[ (d_2 + \bsi d_1) x_2 + d_3 \gamma(x_2+ a-\varepsilon^m)\Big]^n} {n!}.
	\end{align}
Noting that \eqref{eq:B} is similar to \eqref{eq:A}, we can employ a similar approach moving forward. Additionally, for any point $\boldsymbol{x}_0 = (x_{01}, x_{02}, x_{03}) \in \overline{D}_{\varepsilon}$, applying the aforementioned geometric setup in \eqref{eq:tube} and \eqref{eq:lm3}, we find
\begin{equation}\label{eq:x03}
	|\boldsymbol{x} -\boldsymbol{x}_0|^{\zeta} \leq \varepsilon^{\ell \zeta} \big[1+ \Oh(\varepsilon^{2-2\ell})\big]^{\frac{\zeta}{2}}.
\end{equation}
Furthermore, by combining \eqref{eq:h}, \eqref{eq:condition}, \eqref{eq:F}, \eqref{eq:F1}, and \eqref{eq:x03},  we obtain
\begin{align}\label{eq:IF 3}
	|J_F|& = \left|\int_{D_{\varepsilon}} (F_1+F_2+F_3) u_0(\boldsymbol{x}) h_1^{-1/2} (\boldsymbol{x})\rmd \boldsymbol{x}\right|\notag\\
	& \leq M_1 \int_{D_{\varepsilon}} \left|\tilde{\delta} f_{\boldsymbol{x}} - \tilde{\delta} g_{\boldsymbol{x}} + \tilde{\delta} f_u - \tilde{\delta} g_u + \tilde{\delta} f_v - \tilde{\delta} g_v \right| e^{-s \delta |\boldsymbol{x}|}\rmd \boldsymbol{x}\notag\\ 
	&\leq C(M_1,C_1,C_2)\int_{D_{\varepsilon}} \Big(|\boldsymbol{x}- \boldsymbol{x}_0 |^{\zeta} +|\boldsymbol{x}- \boldsymbol{x}_0 |^{\zeta \alpha_1}\Big) e^{-s \delta |\boldsymbol{x}|}\rmd \boldsymbol{x}\notag\\ 
	&\leq C(M_1,C_1,C_2)\bigg[ \frac{\varepsilon^{\zeta \ell}}{s^3}  \big(1+ \Oh(\varepsilon^{2-2\ell})\big)^{\frac{\zeta}{2}} + \frac{\varepsilon^{\zeta \alpha_1 \ell}}{s^3}  \big(1+ \Oh(\varepsilon^{2-2\ell})\big)^{\frac{\zeta \alpha_1}{2}} \bigg]
	e^{- \varepsilon^m s \delta / 4}.
\end{align}
Subsequently, through the combination of \eqref{eq:h}, \eqref{eq:fg1}, \eqref{eq:condition}, and \eqref{eq:g}, we get
\begin{align*}
	|J_g|&\leq \int_{ D_{\varepsilon}} \bigg|\frac{h_2- h_1 }{h_1^{1/2}h_2+h_1 h_2^{1/2}} (\boldsymbol{x}) u_0(\boldsymbol{x}) g(\boldsymbol{x}, u(\boldsymbol{x}),v(\boldsymbol{x}))\bigg|\rmd \boldsymbol{x}\notag\\
	&\leq C(M_1,C_2) \varepsilon^{\alpha_3} \int_{D_{\varepsilon}}  e^{- s \delta |\boldsymbol{x}|} \rmd \boldsymbol{x} \notag\\
	&\leq C(M_1,C_2) \frac{\varepsilon^{\alpha_3}}{s^3} e^{- \varepsilon^m s \delta / 4}.
\end{align*}
Similarly, through appropriate simplification, we can obtain the following estimate of $J_{\tilde{v}}$, as denoted in \eqref{eq:g}, i.e.,    
\begin{align}\label{eq:Jv1}
	|J_{\tilde{v}}|
	&\leq C(M_1,C_1) (\varepsilon^{\alpha_3}+ \varepsilon^{\alpha_2}) \int_{D_{\varepsilon}}  e^{- s \delta |\boldsymbol{x}|} \rmd \boldsymbol{x} \leq C(M_1,C_1) \frac{\varepsilon^{\alpha_3}+ \varepsilon^{\alpha_2}}{s^3} e^{- \varepsilon^m s \delta / 4}.
\end{align}
After that, by combining \eqref{eq:h}, \eqref{eq:w1}, \eqref{eq:condition}, and \eqref{eq:g}, we deduce that
\begin{align}\label{eq:Jw}
	|J_{\tilde{w}}| & \leq \int_{ D_{\varepsilon}} \left|\Big[\frac{1}{4} h_1^{-2} |\nabla h_1|^2 -\frac{1}{2} h_1^{-1} \Delta h_1 \Big] u_0(\boldsymbol{x}) \tilde{w}(\boldsymbol{x})\right|\rmd \boldsymbol{x}\notag\\
	& \leq C(M_1,C_1) (\varepsilon^{\alpha_3} + \varepsilon^{1+\alpha_1}) \int_{D_{\varepsilon}}  e^{- s \delta |\boldsymbol{x}|} \rmd \boldsymbol{x}\notag\\
	& \leq C(M_1,C_1) \frac{\varepsilon^{\alpha_3} + \varepsilon^{1+\alpha_1}}{s^3} e^{- \varepsilon^m s \delta / 4}.
\end{align}
Moreover, according to the CGO solutions \eqref{eq:cgo}, the definition of $\Omega_{\varepsilon}$ in \eqref{eq:Gamma123}, and applying the first mean value theorem, for any point $( x_1^{\prime},\varepsilon^m, x_3^{\prime}) \in \Omega_{\varepsilon}$, we have
\begin{align}\label{eq:Gamma3}
	\|u_0\|_{L^1(\Omega_{\varepsilon})} = \int_{\Omega_{\varepsilon}} e^{s(d_1 x_1^{\prime} + d_3 x_3^{\prime})} e^{s d_2 \varepsilon^m}\rmd \sigma &= \varepsilon^2 e^{s d_2 \varepsilon^m} \int_{\Omega^0} e^{s \varepsilon (d_1 \tilde{x}_{1}+ d_3 \widetilde{x}_{3} )}\rmd \tilde{\sigma} \notag\\
	&= \varepsilon^2 e^{s d_2 \varepsilon^m} e^{s \varepsilon (d_1 \tilde{x}_{1,\xi_1} + d_3 \tilde{x}_{3,\xi_1} )} |\Omega^0|,
\end{align}
where the point $(\tilde{x}_{1,\,\xi_1}, \tilde{x}_{3,\,\xi_1}) \in \Omega^0 $  is free of $\varepsilon$. Next, considering that $\Gamma_{\varepsilon}$ represents the lateral boundary defined by \eqref{eq:lat}, the cross-section $\Omega_\varepsilon$ is a bounded, simply-connected Lipschitz domain in $\mathbb{R}^2$, and $\partial \Omega_\varepsilon$ is piecewise smooth, we may start by parameterizing $\Gamma_{\varepsilon}$ and $\partial \Omega_\varepsilon$. The parameterization of \(\Gamma_{\varepsilon}\) is given as follows:
\begin{equation*}
r(z,x_2)=\Big(x_1(z),\  x_2,\  x_3(z)+[\gamma(x_2+a-\varepsilon^m)-\gamma(a)]\Big),
\end{equation*}
where \(\big(x_1(z),x_3(z)\big)\) is the parameterized form of \(\partial \Omega_\varepsilon\), \(x_2 \in \big(\varepsilon^m,\,\varepsilon^m + \varepsilon^{\ell}\big)\), and \(\gamma\) is defined in \eqref{eq:gamm}. Here $z\in(0,z_1)$, $z_1$ represents the perimeter of $\partial \Omega_{\varepsilon}$ and denote $z_1:=M \varepsilon, M>0$. Then, we derive that
\begin{align}\label{eq:Gamma3 3}
	\|u_0\|_{L^1(\Gamma_{\varepsilon})} &= \int_{0}^{M\varepsilon} \int_{\varepsilon^m}^{\varepsilon^m+\varepsilon^{\ell}} e^{s d_1 x_1(z)} e^{s d_2 x_2} e^{s d_3 (x_3(z) +  [\gamma(x_2+a-\varepsilon^m)-\gamma(a)])} \notag\\
    & \quad\sqrt{x_3^{\prime}(z)^2+ x_1^{\prime}(z)^2(1+ \gamma^{\prime}(x_2+a-\varepsilon^m)^2)} \,\rmd x_2 \rmd z \notag\\ 
	& \leq C e^{s d_3 [\gamma(x_{2,\xi_1}+a-\varepsilon^m)-\gamma(a)]} e^{s d_2 \varepsilon^m} \frac{|e^{s d_2 \varepsilon^{\ell}}-1|}{s} \int_{0}^{M \varepsilon}  e^{s d_1 x_1(z)}  e^{s d_3 x_3(z)}\rmd z\notag\\
	& =  C(M) \varepsilon  e^{s d_3 [\gamma(x_{2,\xi_1}+a-\varepsilon^m)-\gamma(a)]} e^{s d_2 \varepsilon^m} \frac{|e^{s d_2 \varepsilon^{\ell}}-1|}{s}  e^{s  d_1 x_1(\xi)} e^{s  d_3 x_3(\xi)},
\end{align}
where $x_{2,\,\xi_1} \in \left(\varepsilon^m,\varepsilon^m +\varepsilon^{\ell}\right)$ and $\xi \in (0,M\varepsilon)$. The point $(x_1(\xi),x_3(\xi))$ lies on $ \partial \Omega_{\varepsilon}$. Here, $x_{1}(\xi)$ and $ x_{3}(\xi)$ are dependent on $\varepsilon$, namely, $x_1(\xi),x_3(\xi)=\Oh(\varepsilon)$. This property will be used in \eqref{eq:int 33F}. Thus, by combining \eqref{eq:w1}, \eqref{eq:lateral boundary}, \eqref{eq:Gamma3}, and \eqref{eq:Gamma3 3}, we obtain
\begin{align*}
	J_b  & \leq \|\tilde{w}\|_{L^{\infty}(\partial D_{\varepsilon})} \|\nabla u_0\|_{L^{1}(\partial D_{\varepsilon})} + \|\nabla \tilde{w}\|_{L^{\infty}(\partial D_{\varepsilon})} \| u_0\|_{L^{1}(\partial D_{\varepsilon})}  \notag\\[1mm]
	& \leq C(M_1,C_1)\big[(\varepsilon^{\alpha_3} +\varepsilon^{1+\alpha_1}) s + \varepsilon^{\alpha_1} \big] \| u_0\|_{L^{1}(\Omega_{\varepsilon} \cup \Omega_{\varepsilon}^{\prime})} + C(M_1,C_1)\big(\varepsilon^{\alpha_3} s + \varepsilon^{\alpha_3} \big) \| u_0\|_{L^{1}(\Gamma_{\varepsilon})} \notag\\[1mm]
	& \leq C(M_1,C_1) \bigg\{ \Big[(\varepsilon^{2+\alpha_3} +\varepsilon^{3+\alpha_1}) s + \varepsilon^{2+\alpha_1} \Big]  e^{ s d_2 \varepsilon^m} \Big[e^{s \varepsilon (d_1 \tilde{x}_{1,\xi_1} + d_3 \tilde{x}_{3,\xi_1} )} + e^{ s d_2 \varepsilon^{\ell} } e^{s \varepsilon (d_1 \tilde{x}_{1,\xi_2} + d_3 \tilde{x}_{3,\xi_2})}\Big] \notag\\
	& \, + \big(\varepsilon^{1+\alpha_3}  + \frac{\varepsilon^{1+\alpha_3}}{s} \big) |e^{s d_2 \varepsilon^{\ell}}-1| e^{ s d_2 \varepsilon^m}  e^{s d_1 x_{1}(\xi)} e^{s d_3 x_{3}(\xi)} e^{s d_3 [\gamma(x_{2,\xi_1}+a-\varepsilon^m)-\gamma(a)]}\bigg\},
\end{align*}
where $\Omega_{\varepsilon}^{\prime}:= \left\{(\varepsilon \tilde{x}_1,\varepsilon^{m+\ell},\varepsilon \tilde{x}_3)\mid (\tilde{x}_1,\tilde{x}_3) \in \Omega^{\prime} \right\}$ with $\Omega^{\prime}$ being a bounded Lipschitz domain independent of 
$\varepsilon$. The point $\big(\widetilde{x}_{1,\,\xi_2}, \widetilde{x}_{3,\,\xi_2}\big)\in \Omega^{\prime} $  is free of $\varepsilon$. Then, based on Lemma \ref{lem:integ}, the integral equation \eqref{eq:integral fg} can be rewritten as
\begin{equation}\label{eq:int F3}
	|J|=\big|-J_F-J_g -J_{\tilde{v}} - J_{\tilde{w}} + J_b\big|.
\end{equation}

Next, by incorporating \eqref{eq:gamm}, \eqref{eq:I5FF 3}, \eqref{eq:IF 3}--\eqref{eq:int F3}, and taking $s=\varepsilon^{\beta}$ with $\beta \in (-1,-\frac{2}{3}) $ such that
\begin{equation}\label{eq:beta 3F}
2+(1-\zeta \alpha_1)\ell <-3\beta,\ \  \beta + \ell > 0,\ \ \mbox{and} \ \ \beta + m > 0,
\end{equation}
where $\alpha_1$ and $\zeta$  are defined in \eqref{eq:ap1} and \eqref{eq:fg1}, respectively, and the parameters $\ell$ and $ m$ related to $D_{\varepsilon}$ are specified in \eqref{eq:tube}. It can be readily obtained that
\begin{align}\label{eq:int 33F}
	&\Big| F (\boldsymbol{x}_0,u(\boldsymbol{x}_0),v(\boldsymbol{x}_0))\Big| \varepsilon^{\ell+2} \notag\\
	\leq & \frac{ e^{s \gamma(a) d_3 } e^{-s \varepsilon d_1  \tilde{x}_{1,\xi} } e^{-s \varepsilon d_3 \tilde{x}_{3,\xi}}  }{M_2 \big|\Omega^0 \big| \big| \cos(s\varepsilon d_2 \tilde{x}_{1,\xi}) \big|}  \Big|-J_F-J_g -J_{\tilde{v}} - J_{\tilde{w}} + J_b\Big| + \varepsilon^2 \Big| F (\boldsymbol{x}_0,u(\boldsymbol{x}_0),v(\boldsymbol{x}_0))\Big| \int_{\varepsilon^m}^{\varepsilon^m + \varepsilon^{\ell}}  |B|\rmd x_2 \notag\\[1ex]
	\leq & \Big| F (\boldsymbol{x}_0,u(\boldsymbol{x}_0),v(\boldsymbol{x}_0))\Big| \varepsilon^{\ell+2}  s \left(\varepsilon^m+\varepsilon^{\ell}+ \Oh(\varepsilon)\right) + \frac{ e^{s \gamma(a) d_3 } e^{-s \varepsilon d_1  \tilde{x}_{1,\xi} } e^{-s \varepsilon d_3 \tilde{x}_{3,\xi}}  }{M_2 \big|\Omega^0 \big| \big| \cos(s\varepsilon d_2 \tilde{x}_{1,\xi}) \big|}\Bigg\{\notag\\ & \ \quad C(M_1,C_1,C_2)\bigg[ \frac{\varepsilon^{\zeta \ell}}{s^3}  \big(1+ \Oh(\varepsilon^{2-2\ell})\big)^{\frac{\zeta}{2}} + \frac{\varepsilon^{\zeta \alpha_1 \ell}}{s^3}  \big(1+ \Oh(\varepsilon^{2-2\ell})\big)^{\frac{\zeta \alpha_1}{2}} + \frac{ \varepsilon^{\alpha_2}}{s^3} + \frac{ \varepsilon^{1+\alpha_1}}{s^3}\bigg] e^{- \varepsilon^m s \delta / 4}\notag\\
		& + C(M_1,C_1) \bigg\{ \Big[(\varepsilon^{2+\alpha_3} +\varepsilon^{3+\alpha_1}) s + \varepsilon^{2+\alpha_1} \Big]  e^{ s d_2 \varepsilon^m} \Big(e^{s \varepsilon (d_1 \tilde{x}_{1,\xi_1} + d_3 \tilde{x}_{3,\xi_1} )} + e^{ s d_2 \varepsilon^{\ell} } e^{s \varepsilon (d_1 \tilde{x}_{1,\xi_2} + d_3 \tilde{x}_{3,\xi_2})}\Big)  \notag\\
	&  + \big(s \varepsilon^{1+\alpha_3+\ell}  + \varepsilon^{1+\alpha_3+\ell} \big)  e^{ s d_2 \varepsilon^m}  e^{s d_1 x_{1}(\xi)} e^{s d_3 x_{3}(\xi)} e^{s d_3 [\gamma(x_{2,\xi_1}+a-\varepsilon^m)-\gamma(a)]}\bigg\} \Bigg\}\notag\\
	\leq &\, C(\varepsilon_0,M_1,M_2,C_1,C_2) e^{\varepsilon^{\beta} \gamma(a) d_3 } e^{- \varepsilon^{\beta +1} (d_1  \widetilde{x}_{1,\xi} + d_3 \widetilde{x}_{3,\xi}) } \bigg\{
	\varepsilon^{\zeta \alpha_1 \ell-3 \beta} \big[1+ \Oh(\varepsilon^{2-2\ell})\big]^{\frac{\zeta \alpha_1}{2}} e^{-\varepsilon^{\beta+m}  \delta / 4}\notag\\
	& + \varepsilon^{\alpha_2-3\beta} e^{-\varepsilon^{\beta+m}  \delta / 4} +  \varepsilon^{2+\alpha_1}  e^{ \varepsilon^{\beta+m}  d_2}\Big[ e^{ \varepsilon^{\beta+1} (d_1 \widetilde{x}_{1,\xi} + d_3 \widetilde{x}_{3,\xi})}+  e^{ \varepsilon^{\beta+\ell} d_2} e^{ \varepsilon^{\beta+1} (d_1 \widetilde{x}_{1,\xi_1} + d_3 \widetilde{x}_{3,\xi_1})} \Big]  \bigg\}\notag\\
	& +  \varepsilon^{\ell+2} \varepsilon^{\beta} \Big(\varepsilon^m+\varepsilon^{\ell}+ \mathcal{O}(\varepsilon)\Big).
\end{align} 
By further combining \eqref{eq:lm3}, \eqref{eq:beta 3F}, and the condition $\alpha_1 \leq \alpha_2$ in Theorem \ref{thm:1}, we can deduce that
\begin{equation}\label{eq:b1F}
	 (\zeta \alpha_1-1)\ell - 3 \beta -2>0,\quad \alpha_2 - 3 \beta -\ell -1 >0,
\end{equation}
and
\begin{equation}\label{eq:l1F}
	\alpha_1-\ell\in(0,1),\quad  \beta + \ell \in(0,1), \quad \beta +1 \in (0,1).
\end{equation}
Therefore, the expression in \eqref{eq:int 33F} can be transformed into the following form
\begin{align}\notag
	\Big|F (\boldsymbol{x}_0,u(\boldsymbol{x}_0),v(\boldsymbol{x}_0))\Big| \leq C(\varepsilon_0,M_1,M_2,C_1,C_2)\Big(   \varepsilon^{(\zeta \alpha_1 -1)\ell-3 \beta -2} + \varepsilon^{\alpha_1-\ell}\Big) + \varepsilon^{\beta}\Big(\varepsilon^m + \varepsilon^\ell + \Oh(\varepsilon)\Big).
\end{align}
Moreover, taking $\beta=-\frac{3}{4},\,m=\ell=\frac{7}{9}$, we define \begin{equation}\label{eq:tau3} \tau=\min\Big\{\frac{7}{9}\zeta \alpha_1 - \frac{19}{36},\,\,\alpha_1-\frac{7}{9},\,\,\frac{1}{36},\,\,\frac{1}{4}\Big\}.\end{equation} 
By virtue of \eqref{eq:b1F} and \eqref{eq:l1F}, we have $\tau\in(0,1)$. For instance, setting $\zeta=\frac{5}{7}$ and $\alpha_1=\frac{39}{40}$ satisfies \eqref{eq:lm3} and \eqref{eq:beta 3F}, yielding $\tau=\frac{1}{72}$.

\medskip  \noindent {\bf Step II:} Prove the result established in the remaining portion of  $D_\varepsilon$.

 Analogous to the 2D case, we can establish the result for the remaining portion of  $\mathcal{N}_{\varepsilon}$ by employing translations of the region $\mathcal{N}_{\varepsilon}$. The details are omitted. Finally, we finish the proof.
\end{proof}

\subsection{Narrow ends}\label{T3N}
 Recall that the cross-section $\Omega_{\varepsilon}$  is a square with a side length of  $\varepsilon$, as mentioned in Section \ref{sub:main results}. Thus, the boundary of 
$\widetilde{D}_{\varepsilon}\subset\mathcal{N}_{\varepsilon}$ is given by 
$$\partial \widetilde{D}_{\varepsilon} =  \widetilde{\Gamma}_{\varepsilon}^3\cup \widetilde{\Gamma}_{\varepsilon}^4 \cup \widetilde{\Omega}_{\varepsilon} \cup \widetilde{\Omega}^{\prime}_{\varepsilon} \cup \widetilde{\Gamma}_f \cup \widetilde{\Gamma}_b,$$
where
\begin{align}
	\widetilde{\Omega}_{\varepsilon} &=\Big\{(x_1,x_2,x_3)\mid  x_1 \in \big(b,b+\varepsilon\big), \ x_2=a, \ x_3 \in \big(\gamma(a)-\varepsilon,\gamma(a)\big)\Big\},\notag\\
	\widetilde{\Omega}^{\prime}_{\varepsilon} &=\bigg\{(x_1,x_2,x_3)\mid   x_1 \in \big(b,b+\varepsilon\big),\ x_2=a+\varepsilon^{\ell},\ x_3 \in \Big(\gamma(a+\varepsilon^{\ell})-\varepsilon,\gamma(a+\varepsilon^{\ell})\Big)\bigg\}, 
	\notag\\
	\widetilde{\Gamma}_f&=\bigg\{(x_1,x_2,x_3) \mid x_1= b+\varepsilon, \ x_2\in (a,a+\varepsilon^{\ell},) \ x_3 \in \Big(\gamma(x_2)-\varepsilon,\gamma(x_2)\Big) \bigg\},\notag\\
	\widetilde{\Gamma}_b&=\Big\{(x_1,x_2,x_3) \mid x_1= b, \ x_2\in (a,a+\varepsilon^{\ell}), \ x_3 \in (\gamma(x_2)-\varepsilon,\gamma(x_2)) \Big\},\notag\\
	\widetilde{\Gamma}_{\varepsilon}^{3} &=\Big\{(x_1,x_2,x_3)\mid  x_1 \in (b,b+\varepsilon),\ x_2\in (a,a+\varepsilon^{\ell}), \ x_3= \gamma(x_2)\Big\},\notag\\
	\widetilde{\Gamma}_{\varepsilon}^{4} &=\Big\{(x_1,x_2,x_3)\mid  x_1 \in (b,b+\varepsilon), \ x_2\in (a,a+\varepsilon^{\ell}),\ x_3 = \gamma(x_2)-\varepsilon\Big\},\notag
\end{align}
with $a$ and $b$ as constants.

\vspace{+10pt}
\begin{figure}[htbp ]
	\centering
	\includegraphics[scale=0.8]{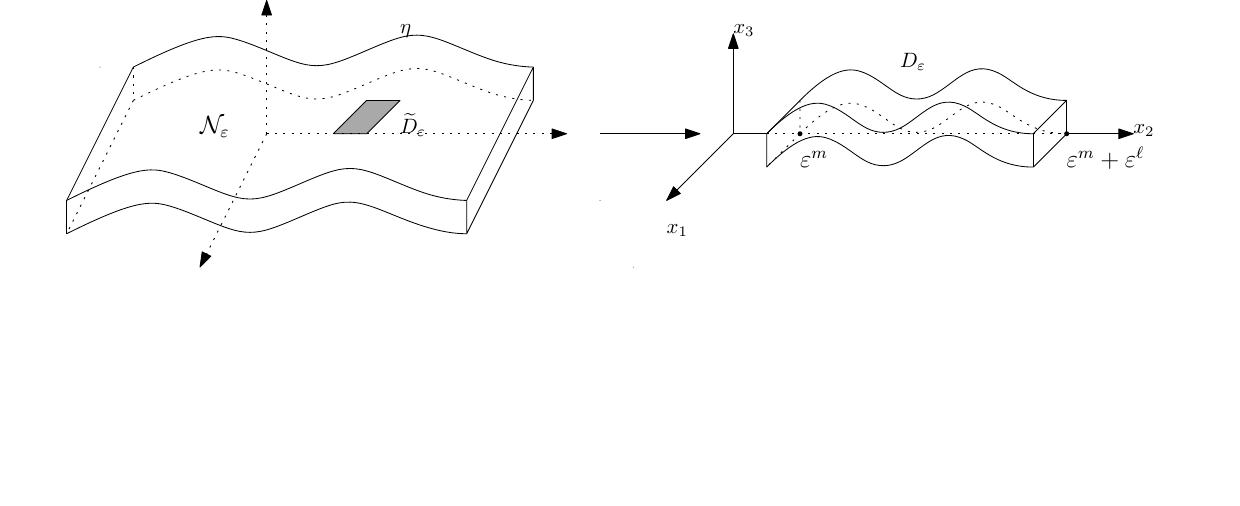}
	\vspace{-3cm}
	\caption{Schematic illustration of the narrow end in 3D}
	\label{fig:3}
\end{figure}

Assume that $a>0$ and $ b<0$. Then we further shift the entire region $\mathcal{N_{\varepsilon}}$ to the left by $a-\varepsilon^m$ units and forward by $-b$ units (translating by $-b$ units in the positive direction of the $x_1$-axis) as our approach focuses on localized analysis. Thus, the above region $\widetilde{D}_{\varepsilon}$ can be expressed as
\begin{align*}
	D_{\varepsilon}& :=\Omega_\varepsilon \times \eta(x_2), \quad x_2 \in ( \varepsilon^m, \varepsilon^m +  \varepsilon^\ell)\subset (-L-a+\varepsilon^m,L-a+ \varepsilon^m).
\end{align*}
The parameters $m$ and $\ell$ also satisfy the condition \eqref{eq:lm3}. And the boundary of  $D_{\varepsilon}$ is defined as
$$\partial D_{\varepsilon} =   \Gamma_{\varepsilon}^3\cup \Gamma_{\varepsilon}^4 \cup \Omega_{\varepsilon} \cup \Omega_{\varepsilon}^{\prime} \cup \Gamma_f \cup \Gamma_b,$$
where
\begin{align}
	\Omega_{\varepsilon} &:=\bigg\{(x_1,x_2,x_3)\Big|  x_1 \in (0,\varepsilon),\ x_2= \varepsilon^m, \ x_3 \in \Big(\gamma(a)-\varepsilon,\,\gamma(a)\Big)\bigg\},\notag\\
	\Omega_{\varepsilon}^{\prime} &:=\bigg\{(x_1,x_2,x_3)\Big| x_1 \in (0,\varepsilon), \ x_2=\varepsilon^m + \varepsilon^{\ell}, \ x_3 \in \Big(\gamma( a+\varepsilon^{\ell})- \varepsilon, \gamma(a + \varepsilon^{\ell})\Big)\bigg\},\notag\\
		\Gamma_f&=\bigg\{(x_1,x_2,x_3)\Big| x_1=\varepsilon, \ x_2\in (\varepsilon^m,\varepsilon^m+\varepsilon^{\ell}), \ x_3 \in \Big( \gamma(x_2+ a-\varepsilon^m)-\varepsilon, \gamma(x_2+ a-\varepsilon^m)\Big) \bigg\},\notag\\
        \Gamma_b&=\bigg\{(x_1,x_2,x_3)\Big| x_1= 0, \ x_2\in (\varepsilon^m,\varepsilon^m+\varepsilon^{\ell}), \ x_3 \in \Big(\gamma(x_2+ a-\varepsilon^m)-\varepsilon, \gamma(x_2+ a-\varepsilon^m)\Big) \bigg\},\notag\\
        \Gamma_{\varepsilon}^{3} &=\bigg\{(x_1,x_2,x_3)\Big| x_1 \in (0,\varepsilon),\ x_2\in \big(\varepsilon^m,\varepsilon^m +\varepsilon^{\ell}\big), \ x_3= \gamma(x_2+a-\varepsilon^m)\bigg\},\notag\\
        \Gamma_{\varepsilon}^{4} &=\bigg\{(x_1,x_2,x_3)\Big|  x_1 \in (0, \varepsilon), \ x_2\in \big(\varepsilon^m,\varepsilon^m +\varepsilon^{\ell}\big),\ x_3 = \gamma(x_2+a-\varepsilon^m)-\varepsilon\bigg\},\label{eq:GammaF}
\end{align}
with $\gamma$ satisfying \eqref{eq:gamm}; see Figure \ref{fig:3} for a schematic illustration of $\widetilde{D}_{\varepsilon}$ and $D_{\varepsilon}$.

From the construction of the aforementioned narrow ends, the narrow end can be regarded, to some extent, as a region formed by the parallel translation of thin ends along a specific normal-sized edge. Choosing any one ``thin end", there exist several (at least two) artificial boundaries that differ from those of the thin ends discussed in the preceding subsection. Therefore, we will focus exclusively on the artificial boundaries of the narrow end. To other proofs, such as Theorem \ref{thm:1} associated with thin ends in Subsection \ref{T3D}, we may apply them directly.

\begin{proof}[\textbf{ Proof of Theorem \ref{thm:1} in 3D with narrow ends}]
In comparison to the thin ends in 3D in Subsection \ref{T3D}, there is one term that requires special attention. 
	 \begin{equation*}
		 J_b:= \int_{\Gamma_f \cup \Gamma_b} \big(\tilde{w} \partial_{\nu} u_0 - u_0 \partial_{\nu} \tilde{w}\big) \rmd \sigma + \int_{\Omega_{\varepsilon}\cup \Omega_{\varepsilon}^{\prime}\cup \Gamma_{\varepsilon}^3 \cup \Gamma_{\varepsilon}^4} \big(\tilde{w} \partial_{\nu} u_0 - u_0 \partial_{\nu} \tilde{w}\big) \rmd \sigma. 
	\end{equation*}  
Given that $\Gamma_{\varepsilon}^3$ and $\Gamma_f$ are defined in \eqref{eq:GammaF}, we conclude that
\begin{align}\label{eq:Gamma 33}
	\|u_0\|_{L^1(\Gamma_{\varepsilon}^3)} &= \int_{0}^{\varepsilon} \int_{\varepsilon^m}^{\varepsilon^m+\varepsilon^{\ell}} e^{s d_1 x_1} e^{s d_2 x_2} e^{s d_3 \gamma(x_2+a-\varepsilon^m)} \sqrt{1+ \gamma^{\prime}(x_2+a-\varepsilon^m)^2} \rmd x_2 \rmd x_1 \notag\\ & \leq C e^{s d_3 \gamma(x_{2}^1+a-\varepsilon^m)} e^{s d_2 \varepsilon^m} \frac{|(e^{s d_2 \varepsilon^{\ell}}-1) (e^{s d_1 \varepsilon}-1)|}{s^2|d_1d_2|} ,\notag\\
	\|u_0\|_{L^1(\Gamma_{f})} & = e^{s d_1 \varepsilon} \int_{\varepsilon^m}^{\varepsilon^m+\varepsilon^{\ell}} \int_{\gamma(x_2+a-\varepsilon^m)-\varepsilon}^{\gamma(x_2+a-\varepsilon^m)}  e^{s d_2 x_2} e^{s d_3 x_3}  \rmd x_3 \rmd x_2 \notag\\
	&=  e^{s d_1 \varepsilon} \frac{|1-e^{-s\varepsilon d_3}|}{s|d_3|}\int_{\varepsilon^m}^{\varepsilon^m+\varepsilon^{\ell}}  e^{s  d_2 x_2} e^{s d_3 \gamma(x_2+a-\varepsilon^m)}  \rmd x_2 \notag\\
		&= e^{s d_1 \varepsilon}e^{s d_2 \varepsilon^m} e^{s d_3 \gamma(x_{2,\xi_1}+a-\varepsilon^m)} \frac{|(1-e^{-s d_3 \varepsilon})(e^{s d_2 \varepsilon^{\ell}}-1)|}{s^2|d_2d_3|},
\end{align}
where $x_{2}^{1},x_{2,\xi_1} \in \left(\varepsilon^m, \varepsilon^m + \varepsilon^{\ell}\right)$. Consequently, our primary focus will be on the first term. Combining \eqref{eq:3d} and \eqref{eq:GammaF}, we derive
\begin{align}\label{eq:na b1}
&\int_{\Gamma_f \cup \Gamma_b} \big(\tilde{w} \partial_{\nu} u_0 - u_0 \partial_{\nu} \tilde{w}\big) \rmd \sigma\notag\\
 =& \int_{\Gamma_f \cup \Gamma_b} \tilde{w} \partial_{\nu} u_0 \rmd \sigma - \int_{\Gamma_f } \nu_f \cdot \nabla \tilde{w} u_0 \rmd \sigma - \int_{\Gamma_b } \nu_b \cdot \nabla \tilde{w} u_0 \rmd \sigma \notag\\
	 = &\int_{\Gamma_f \cup \Gamma_b} \tilde{w} \partial_{\nu} u_0 \rmd \sigma - e^{s (d_1 - \bsi d_2) \varepsilon } \int_{\Gamma_f} e^{s (d_2 + \bsi d_1) x_2 } e^{s d_3 x_3  } \partial_{x_1} \tilde{w}(\varepsilon,x_2,x_3) \rmd \sigma\notag\\
	&+ \int_{\Gamma_b} e^{s (d_2 + \bsi d_1) x_2 } e^{s d_3 x_3  } \partial_{x_1} w(0,x_2,x_3) \rmd \sigma\notag\\
	= & \int_{\Gamma_f \cup \Gamma_b} \tilde{w} \partial_{\nu} u_0 \rmd \sigma, 
	\end{align}
where $\nu_f = (1, 0, 0)^{\top}$ and $\nu_b = (-1, 0, 0)^{\top}$. Thus, by using \eqref{eq:w1}, \eqref{eq:Gamma3}, \eqref{eq:Gamma 33}, and \eqref{eq:na b1}, we obtain
\begin{align}\label{eq:na b2}
	|J_b|\leq & C(M_1,C_1)\bigg\{  \big[(\varepsilon^{2+\alpha_3} +\varepsilon^{3+\alpha_1}) s + \varepsilon^{2+\alpha_1} \big]  e^{ s d_2 \varepsilon^m} \big(e^{s \varepsilon (d_1 \tilde{x}_{1,\xi_1} + d_3 \tilde{x}_{3,\xi_1} )} + e^{ s d_2 \varepsilon^{\ell} } e^{s \varepsilon (d_1 \tilde{x}_{1,\xi_2} + d_3 \tilde{x}_{3,\xi_2})}\big)  \notag\\
	&  + \big(\varepsilon^{\alpha_3} s  + \varepsilon^{\alpha_3} \big)  e^{s d_2 \varepsilon^m} \frac{|(e^{s d_2 \varepsilon^{\ell}}-1) (e^{s d_1 \varepsilon}-1)|}{s^2}\big( e^{s d_3 \gamma(x_{2}^1+a-\varepsilon^m)}+ e^{s d_3 (\gamma(x_{2}^2+a-\varepsilon^m)-\varepsilon)}\big) \notag\\
	&+ (\varepsilon^{\alpha_3}+\varepsilon^{1+\alpha_1}) e^{s d_2 \varepsilon^m}
    \frac{|(1-e^{-s d_3 \varepsilon})(e^{s d_2 \varepsilon^{\ell}}-1) |}{s}
    \Big[ e^{s d_1 \varepsilon} e^{s d_3  \gamma(x_{2,\xi_1}+a-\varepsilon^m)}+  e^{s d_3  \gamma(x_{2,\xi_2}+a-\varepsilon^m)} \Big]\bigg\},
\end{align}
where $(\tilde{x}_{1,\,\xi_2}, \tilde{x}_{3,\,\xi_2}) \in \Omega^0$   is independent of $\varepsilon$. And $x_2^2, x_{2,\xi_2} \in (\varepsilon^m, \varepsilon^m + \varepsilon^{\ell})$.

Similar to the proof of Theorem \ref{thm:1} in Subsection \ref{T3D}, we can derive the estimate in Theorem \ref{thm:1} by combining the estimates related to the terms $J_F$, $J_g$, $J_{\tilde{v}}$, and $J_{\tilde{w}}$ listed in Section \ref{T3D}, along with $\widetilde{J}$ and $J_b$ presented in \eqref{eq:I56 3F} and \eqref{eq:na b2}, along with some intricate calculations. To avoid redundancy, we have omitted the details.
\end{proof}

\section{Proofs of Theorems \ref{th:ibvp}-\ref{thm:iden} and Corollaries \ref{Cor:1} -\ref{Cor:2} }\label{sec:6}
This section provides detailed proofs of Theorems \ref{th:ibvp}--\ref{thm:iden} and Corollaries \ref{Cor:1}--\ref{Cor:2} in Section \ref{sec:applications}. We will first prove Corollaries \ref{Cor:1} and \ref{Cor:2}. As discussed in Section \ref{sec:applications}, the interior transmission eigenvalue problems \eqref{eq:s1} and \eqref{eq:h2} form a subclass of the general coupled PDE system \eqref{eq:sym1}, where the right-hand side functions $f$ and $g$ exhibit linearity with respect to $u$ and $v$, respectively. Consequently, we can employ a methodology analogous to that used in Theorem \ref{thm:1} to establish Corollaries \ref{Cor:1} and \ref{Cor:2}, with the necessary modifications. In the following, we will demonstrate Corollary \ref{Cor:2} while omitting the proof of Corollary \ref{Cor:1}. The proof of Corollary \ref{Cor:2} will be conducted by examining two distinct cases.

\medskip \noindent  {\bf 2D Case:} 
Before the proof, we present some important estimates that play a vital role in the subsequent analysis of the 2D scenarios. Additionally, the following lemma offers supplementary perspectives and analysis related to \eqref{eq:wu0} and \eqref{eq:integral}.

\begin{lem}\label{lem:I2}
	Define
	\begin{align*}
		I_1:&= v (\boldsymbol{x}_0) \int_{D_{\varepsilon}} ( \varphi(\boldsymbol{x}) - \varphi (\boldsymbol{x}_0)) e^{\rho \cdot \boldsymbol{x}} \rmd \boldsymbol{x}, \hspace{1.79cm}  I_2:= \varphi (\boldsymbol{x}_0)  \int_{D_{\varepsilon}} (v(\boldsymbol{x})-v (\boldsymbol{x}_0))  e^{\rho \cdot \boldsymbol{x}} \rmd \boldsymbol{x}, \notag \\ 
		I_3:&=  \int_{D_{\varepsilon}} (\varphi(\boldsymbol{x}) - \varphi(\boldsymbol{x}_0) ) (v(\boldsymbol{x})-v (\boldsymbol{x}_0))  e^{\rho \cdot \boldsymbol{x}} \rmd \boldsymbol{x}, \quad I_4:= \int_{D_{\varepsilon}} q w e^{\rho \cdot \boldsymbol{x}} \rmd \boldsymbol{x}, \\
		I_6 :&= \int_{\Gamma_{\varepsilon}^1\cup\Gamma_{\varepsilon}^{2}} \big(w \partial_{\nu} u_0 - u_0 \partial_{\nu} w\big) \rmd \sigma.
	\end{align*}
	Then the following estimates hold
	\begin{align}
		|I_1|& \leq C(V_0)  \frac{\varepsilon^{\alpha \ell}}{s^2}  \Big[1+\Oh(\varepsilon^{2-2\ell})\Big]^{\frac{\alpha}{2}}
		e^{- \varepsilon^m s \delta / 4}, \notag \\
		|I_2|& \leq C(V_0,C_1)  \frac{\varepsilon^{\alpha_1 \ell}}{s^2}  \Big[1+\Oh(\varepsilon^{2-2\ell})\Big]^{\frac{\alpha_1}{2}}
		e^{- \varepsilon^m s \delta / 4},\notag \\
		|I_3|& \leq  C(V_0,C_1)  \frac{\varepsilon^{(\alpha + \alpha_1)\ell }}{s^2}  \Big[1+\Oh(\varepsilon^{2-2\ell})\Big]^{\frac{\alpha+\alpha_1}{2}}
		e^{-\varepsilon^m s \delta / 4}, \notag \\
		|I_4| &\leq C(V_0,C_1) \frac{\varepsilon^{1+\alpha_1}}{s^2 }  \Big[1+ \varepsilon^{\alpha \ell }\big(1+\Oh(\varepsilon^{2-2\ell})\big)^{\frac{\alpha}{2}}\Big]  e^{-\varepsilon^m s \delta / 4},\notag\\
		|I_6|& \leq C(C_1)\Big(\varepsilon^{2+\alpha_1} s + \varepsilon^{1+\alpha_1} \Big) e^{ s d_1 \varepsilon^m} \Big(e^{s \varepsilon d_2 \widetilde{x}_{2,\xi_1}} + e^{ s d_1 \varepsilon^{\ell} } e^{s \varepsilon d_2 \widetilde{x}_{2,\xi_2}}\Big),\label{eq:I6}
	\end{align}
	where $C_1, \alpha_1$, and $V_0, \alpha$ are defined in \eqref{eq:ap1} and \eqref{eq:ap2}, respectively. The parameters $\ell$ and $\delta$ are defined in \eqref{eq:mini} and \eqref{eq:condition}, respectively. Here, $\widetilde{x}_{2,\,\xi_1} \in \left(\frac{\gamma(a)}{\varepsilon}-1, \frac{\gamma(a)}{\varepsilon}\right)$, $\widetilde{x}_{2,\,\xi_2} \in \left(\frac{\gamma(a+\varepsilon^{\ell})}{\varepsilon}-1, \frac{\gamma(a+\varepsilon^{\ell})}{\varepsilon}\right)$ are fixed and independent of $\varepsilon$.
\end{lem}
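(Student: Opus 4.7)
The plan is to estimate each of the five integrals $I_1, I_2, I_3, I_4, I_6$ by combining three ingredients already available in the excerpt: (i) the Hölder regularity of $\varphi=q-1$ (from \eqref{eq:q}) and of $v$ (from \eqref{eq:uv2}); (ii) the geometric bound on $|\boldsymbol{x}-\boldsymbol{x}_0|$ over $D_\varepsilon$, namely $|\boldsymbol{x}-\boldsymbol{x}_0|\le \varepsilon^\ell(1+\mathcal{O}(\varepsilon^{2-2\ell}))^{1/2}$, which is the analogue of \eqref{eq:x0}; (iii) the CGO-weighted integral bound
\[
\int_{D_\varepsilon} e^{-s\delta|\boldsymbol{x}|}\,\rmd\boldsymbol{x}\;\le\;\frac{C}{s^2}\,e^{-\varepsilon^m s\delta/4},
\]
obtained via polar coordinates and the incomplete gamma function \eqref{eq:gamma0}, exactly as in the chain of inequalities leading to \eqref{eq:IF}.

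For $I_1$, I would factor $v(\boldsymbol{x}_0)$ out and use $|\varphi(\boldsymbol{x})-\varphi(\boldsymbol{x}_0)|\le V_0|\boldsymbol{x}-\boldsymbol{x}_0|^\alpha$, plug in the geometric bound to obtain the prefactor $\varepsilon^{\alpha\ell}(1+\mathcal{O}(\varepsilon^{2-2\ell}))^{\alpha/2}$, then apply (iii); here one must also use $|v(\boldsymbol{x}_0)|\le C_1$ (from \eqref{eq:uv2}) and the decay \eqref{eq:condition}. For $I_2$, the same strategy works with the roles of $v$ and $\varphi$ interchanged, using the Hölder seminorm of $v$ induced by $v\in C^{1,\alpha_1}(\overline{\mathcal{N}}_\varepsilon)$. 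For $I_3$, the two Hölder bounds multiply, producing the exponent $(\alpha+\alpha_1)\ell$; otherwise the argument is identical.

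The estimate for $I_4$ is slightly different and requires one extra decomposition. I would write $q = 1+\varphi = 1 + \varphi(\boldsymbol{x}_0) + (\varphi(\boldsymbol{x})-\varphi(\boldsymbol{x}_0))$, so that
\[
I_4 \;=\; (1+\varphi(\boldsymbol{x}_0))\!\int_{D_\varepsilon}\! w\,e^{\rho\cdot\boldsymbol{x}}\,\rmd\boldsymbol{x}\;+\;\int_{D_\varepsilon}(\varphi(\boldsymbol{x})-\varphi(\boldsymbol{x}_0))\,w\,e^{\rho\cdot\boldsymbol{x}}\,\rmd\boldsymbol{x}.
\]
Inserting the pointwise bound $\|w\|_{L^\infty(\mathcal{N}_\varepsilon)}\le C(C_1)\varepsilon^{1+\alpha_1}$ from \eqref{eq:w} and then applying (iii) to the first term gives the $\varepsilon^{1+\alpha_1}/s^2\cdot e^{-\varepsilon^m s\delta/4}$ contribution, while the second term contributes the additional factor $\varepsilon^{\alpha\ell}(1+\mathcal{O}(\varepsilon^{2-2\ell}))^{\alpha/2}$ by the Hölder bound on $\varphi$. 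Combining yields precisely the claimed estimate with the bracketed prefactor.

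Finally, $I_6$ is purely a boundary term handled by $|I_6|\le\|w\|_{L^\infty}\|\nabla u_0\|_{L^1(\Gamma_\varepsilon^1\cup\Gamma_\varepsilon^2)}+\|\nabla w\|_{L^\infty}\|u_0\|_{L^1(\Gamma_\varepsilon^1\cup\Gamma_\varepsilon^2)}$, substituting the two bounds in \eqref{eq:w}, using $|\nabla u_0|\le s|u_0|$, and the explicit formula \eqref{eq:Gamma 1} for $\|u_0\|_{L^1(\Gamma_\varepsilon^1)}$ together with the analogous computation on $\Gamma_\varepsilon^2$ (which introduces the factor $e^{sd_1\varepsilon^\ell}e^{s\varepsilon d_2\tilde x_{2,\xi_2}}$). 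This directly produces the prefactor $\varepsilon^{2+\alpha_1}s+\varepsilon^{1+\alpha_1}$ and the exponential sum in \eqref{eq:I6}. No serious obstacle is anticipated; the main bookkeeping challenge is to keep track of which constant depends on which of $V_0$ and $C_1$, and to verify that the first-mean-value-theorem applications (as in \eqref{eq:Gamma 1}) give the stated evaluation points $\widetilde{x}_{2,\xi_1},\widetilde{x}_{2,\xi_2}$ independent of $\varepsilon$.
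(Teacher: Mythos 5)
Your proposal is correct and follows essentially the same route as the paper: Hölder expansions of $\varphi$ and $v$ about $\boldsymbol{x}_0$ combined with the geometric bound \eqref{eq:x0}, the weighted estimate $\int_{D_\varepsilon}e^{-s\delta|\boldsymbol{x}|}\rmd\boldsymbol{x}\le Cs^{-2}e^{-\varepsilon^m s\delta/4}$ via the incomplete gamma function, the decomposition $q=1+\varphi(\boldsymbol{x}_0)+(\varphi-\varphi(\boldsymbol{x}_0))$ together with \eqref{eq:w} for $I_4$, and the boundary splitting with $|\nabla u_0|\lesssim s|u_0|$ and the mean-value evaluation \eqref{eq:Gamma 1} on $\Gamma_\varepsilon^1,\Gamma_\varepsilon^2$ for $I_6$. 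No gaps; only the grouping of terms in $I_4$ and minor constant bookkeeping differ from the paper's write-up.
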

\begin{proof}
	Since $q\in C^{0,\alpha}(\overline{\mathcal{N}}_{\varepsilon})$, we have $\varphi:=q-1\in C^{0,\alpha}(\overline{D}_{\varepsilon})$. Therefore, it follows that
	\begin{equation}\label{eq:varphi}
		\varphi(\boldsymbol{x})= \varphi(\boldsymbol{x}_0)+(\varphi(\boldsymbol{x})-\varphi(\boldsymbol{x}_0)), \quad|\varphi(\boldsymbol{x})-\varphi(\boldsymbol{x}_0)|\leq \|\varphi(\boldsymbol{x})\|_{C^{0,\alpha}(\overline{D}_{\varepsilon})}|\boldsymbol{x}-\boldsymbol{x}_0|^{\alpha},
	\end{equation}
where the point $\boldsymbol{x}_0 = \left(x_{01}, x_{02}\right) \in D_{\varepsilon}$. It is noted that $\varphi(\boldsymbol{x}_0) \neq 0$ with respect to \eqref{eq:ap3}. By combining \eqref{eq:cgo}, \eqref{eq:condition}, \eqref{eq:x0}, and \eqref{eq:varphi}, we obtain
	\begin{align*}
		|I_1|& \leq | v (\boldsymbol{x}_0)|\|\varphi(\boldsymbol{x})\|_{C^{0,\alpha}(\overline{D}_{\varepsilon})} \varepsilon^{\alpha \ell} \Big[1+ \Oh(\varepsilon^{2-2\ell})\Big]^{\frac{\alpha}{2}} \int_{D_{\varepsilon}}  e^{- s \delta |\boldsymbol{x}|} \rmd \boldsymbol{x} \notag \\
		& \leq C( V_0) \frac{\varepsilon^{\alpha \ell}}{s^2}  \Big[1+ \Oh(\varepsilon^{2-2\ell})\Big]^{\frac{\alpha}{2}} 
		e^{- \varepsilon^m s \delta / 4}.
	\end{align*}		
Given that $v \in C^{0,\alpha_1}(\overline{D}_{\varepsilon})$ as defined in \eqref{eq:ap1}, it can be expressed as
	\begin{equation}\label{eq:vx}
		v(\boldsymbol{x}) = 	v(\boldsymbol{x}_0) + (	v(\boldsymbol{x})-	v(\boldsymbol{x}_0)),\quad  |v(\boldsymbol{x})-v(\boldsymbol{x}_0)|\leq C_1|\boldsymbol{x}-\boldsymbol{x}_0|^{\alpha_1}.
	\end{equation}
Thus, the estimates for $|I_2|$ and $|I_3|$ can be proven similarly to $I_1$ by combining them with \eqref{eq:vx}. Furthermore, by utilizing \eqref{eq:w} and a similar approach, we obtain
	\begin{align*}
		|I_4|  &\leq \Big|\varphi(\boldsymbol{x}_0)\int_{D_{\varepsilon}} w e^{\rho\cdot \boldsymbol{x}}\rmd \boldsymbol{x}\Big|+ \left| \int_{D_{\varepsilon}} (\varphi(\boldsymbol{x})-\varphi(\boldsymbol{x}_0))w e^{\rho\cdot \boldsymbol{x}}\rmd \boldsymbol{x}\right|+  \Big|\int_{D_{\varepsilon}} w e^{\rho\cdot \boldsymbol{x}}\rmd \boldsymbol{x}\Big|\notag\\
		& \leq C(V_0,C_1) \varepsilon^{1+\alpha_1} \left(\Big|\int_{D_{\varepsilon}}  e^{\rho\cdot \boldsymbol{x}}\rmd \boldsymbol{x}\Big|+ \Big|\int_{D_{\varepsilon}} |\boldsymbol{x}- \boldsymbol{x}_0|^{\alpha}  e^{\rho\cdot \boldsymbol{x}}\rmd \boldsymbol{x}\Big| \right) \notag\\
		& \leq C(V_0,C_1) \frac{\varepsilon^{1+\alpha_1}}{s^2 }  \Big[1+ \varepsilon^{\alpha \ell }\big(1+\Oh(\varepsilon^{2-2\ell})\big)^{\frac{\alpha}{2}}\Big]  e^{-\varepsilon^m s \delta / 4}.
	\end{align*}
Next, by combining \eqref{eq:w} and \eqref{eq:Gamma 1}, we find
	\begin{align*}\notag
		I_6  & \leq \|w\|_{L^{\infty}(\Gamma_{\varepsilon}^{1}  \cup \Gamma_{\varepsilon}^{2})} \|\nabla u_0\|_{L^{1}(\Gamma_{\varepsilon}^1 \cup \Gamma_{\varepsilon}^2)} + \|\nabla w\|_{L^{\infty}(\Gamma_{\varepsilon}^1  \cup \Gamma_{\varepsilon}^{2})} \| u_0\|_{L^{1}(\Gamma_{\varepsilon}^{1}  \cup \Gamma_{\varepsilon}^{2})}  \notag\\[1mm]
		& \leq C(C_1)\big(\varepsilon^{1+\alpha_1} s + \varepsilon^{\alpha_1} \big) \| u_0\|_{L^{1}(\Gamma_{\varepsilon}^{1} \cup \Gamma_{\varepsilon}^{2})} \notag\\[1mm]
		& \leq C(C_1)\Big(\varepsilon^{2+\alpha_1} s + \varepsilon^{1+\alpha_1} \Big) e^{ s d_1 \varepsilon^m} \Big(e^{s \varepsilon d_2 \widetilde{x}_{2,\xi_1}} + e^{ s d_1 \varepsilon^{\ell} } e^{s \varepsilon d_2 \widetilde{x}_{2,\xi_2}}\Big), 
	\end{align*}
	where $\widetilde{x}_{2,\,\xi_1} \in \left(\frac{\gamma(a)}{\varepsilon}-1,\frac{\gamma(a)}{\varepsilon}\right)$ and $\widetilde{x}_{2,\,\xi_2} \in \left(\frac{\gamma(a+\varepsilon^{\ell})}{\varepsilon}-1,\frac{\gamma(a+\varepsilon^{\ell})}{\varepsilon}\right)$ are constants independent of $\varepsilon$.

	The proof is complete.
\end{proof}
\begin{proof}[ \textbf{Proof of Corollary \ref{Cor:2} in 2D}]

Define
\begin{equation}\label{eq:I56}
I_5 := \varphi(\boldsymbol{x}_0) v(\boldsymbol{x}_0) \int_{D_{\varepsilon}} e^{\rho \cdot \boldsymbol{x}} \rmd \boldsymbol{x}.
\end{equation}
Then the integral equation \eqref{eq:integral} can be rewritten as
	\begin{equation}\label{eq:int 1}
		|I_5|=\big|k^{-2} I_6-(I_1 + I_2 +I_3+I_4)\big|.
	\end{equation}
Let $s=\varepsilon^{\beta}$,  where $\beta \in (-1, -\frac{1}{2})$ satisfies that
\begin{equation}\label{eq:beta1}
	\max\{1+(1-\alpha)\ell,1+(1-\alpha_1)\ell\}<-2\beta,\ \  \beta + \ell > 0\ \ \mbox{and} \ \ \beta + m > 0,
\end{equation}
where $\alpha_1$ and $\alpha$ are defined in \eqref{eq:ap1} and \eqref{eq:ap2}, respectively. The parameters $\ell$ and $m$ related to $D_{\varepsilon}$ are defined in \eqref{eq:mini}.
Set $F\left(\boldsymbol{x}_0, u(\boldsymbol{x}_0), v(\boldsymbol{x}_0)\right) = \varphi(\boldsymbol{x}_0) v(\boldsymbol{x}_0)$ and $h_1(\boldsymbol{x})=1$ in \eqref{eq:I5 F}. Subsequently, substituting \eqref{eq:gamm},  \eqref{eq:I5 F}, \eqref{eq:1A}, \eqref{eq:I6}, and \eqref{eq:beta1} into \eqref{eq:int 1}, it can be readily obtained that
	\begin{align}\label{eq:int 3}
		| v(\boldsymbol{x}_0)| \varepsilon^{\ell+1} \leq & \frac{ e^{s d_2 \gamma(a)} e^{-s \varepsilon d_2 \widetilde{x}_{2,\xi}}  }{\Big| \cos (s\varepsilon d_1 \widetilde{x}_{2,\xi})\varphi (\boldsymbol{x}_0)   \big| }\left \{k^{-2}|I_6| + \sum\limits_{n=1}^4 |I_n|\right\} + \varepsilon  \big|v (\boldsymbol{x}_0) \big| \int_{\varepsilon^m}^{\varepsilon^m+\varepsilon^\ell} \big|A\big| \rmd x_1 \notag\\
		 \leq & \frac{ e^{s d_2 \gamma(a)} e^{-s \varepsilon d_2 \widetilde{x}_{2,\xi}}  }{\Big| \cos (s\varepsilon d_1 \widetilde{x}_{2,\xi})\varphi (\boldsymbol{x}_0)   \big| } \bigg\{ k^{-2} \big(\varepsilon^{2+\alpha_1} s + \varepsilon^{1+\alpha_1} \big) e^{ s d_1 \varepsilon^m} \big(e^{s \varepsilon d_2 \widetilde{x}_{2,\xi_1}} + e^{ s d_1 \varepsilon^{\ell} } e^{s \varepsilon d_2 \widetilde{x}_{2,\xi_2}}\big)\notag \\
		& + C(V_0)  \frac{\varepsilon^{\alpha \ell}}{s^2}  \big[1+\Oh(\varepsilon^{2-2\ell})\big]^{\frac{\alpha}{2}}
		e^{- \varepsilon^m s \delta / 4} +  C(V_0,C_1)  \frac{\varepsilon^{\alpha_1 \ell}}{s^2}  \big[1+\Oh(\varepsilon^{2-2\ell})\big]^{\frac{\alpha_1}{2}}
		e^{- \varepsilon^m s \delta / 4}\notag \\
		& +  C(V_0,C_1)  \frac{\varepsilon^{(\alpha + \alpha_1)\ell }}{s^2}  \big[1+\Oh(\varepsilon^{2-2\ell})\big]^{\frac{\alpha+\alpha_1}{2}}
		e^{-\varepsilon^m s \delta / 4} \notag \\
		& + C(V_0,C_1) \frac{\varepsilon^{1+\alpha_1}}{s^2 }  \big[1+ \varepsilon^{\alpha \ell }\big(1+\Oh(\varepsilon^{2-2\ell})\big)^{\frac{\alpha}{2}}\big]  e^{-\varepsilon^m s \delta / 4}\bigg\} + C_1 \varepsilon^{\ell+1} s (\varepsilon^m+\varepsilon^{\ell}+ \Oh(\varepsilon))\notag\\
		 \leq & C(\varepsilon_0,\epsilon_0,k,V_0,C_1)  e^{\varepsilon^{\beta} d_2 \gamma(a)} e^{-s \varepsilon^{\beta+1} d_2 \widetilde{x}_{2,\xi}} \bigg\{
		   \varepsilon^{\alpha \ell - 2 \beta } \big[1+\Oh(\varepsilon^{2-2\ell})\big]^{\frac{\alpha}{2}}
		 e^{- \varepsilon^{\beta+m} \delta / 4} \notag\\
		 & + k^{-2} \big(\varepsilon^{\beta +2+\alpha_1}  + \varepsilon^{1+\alpha_1} \big) e^{\varepsilon^{\beta+m} d_1} \Big(e^{ \varepsilon^{\beta+1} d_2 \widetilde{x}_{2,\xi_1}} + e^{ \varepsilon^{\beta+\ell} d_1} e^{ \varepsilon^{\beta+1} d_2 \widetilde{x}_{2,\xi_2}}\Big)\notag\\
		  & +  \varepsilon^{\alpha_1 \ell - 2 \beta } \big[1+\Oh(\varepsilon^{2-2\ell})\big]^{\frac{\alpha_1}{2}}
		e^{- \varepsilon^{\beta+m} \delta / 4} +    \varepsilon^{(\alpha + \alpha_1)\ell-2 \beta }  \big[1+\Oh(\varepsilon^{2-2\ell})\big]^{\frac{\alpha+\alpha_1}{2}}
		e^{-\varepsilon^{\beta+m}\delta / 4} \notag\\
		& +  \varepsilon^{1+\alpha_1-2 \beta}  \big[1+ \varepsilon^{\alpha \ell }\big(1+\Oh(\varepsilon^{2-2\ell})\big)^{\frac{\alpha}{2}}\big]  e^{-\varepsilon^{\beta+m} \delta / 4}\bigg\} + \varepsilon^{\ell+1} \varepsilon^{\beta} \Big(\varepsilon^m +\varepsilon^{\ell}+ \Oh(\varepsilon)\Big). 
	\end{align} 

Then, by combining \eqref{eq:mini}, and \eqref{eq:beta1}, we can deduce that
\begin{equation}\label{eq:b}
	 (\alpha-1)\ell - 2 \beta -1>0, \ \  (\alpha_1-1)\ell - 2 \beta -1>0,\ \ \beta -\ell +1 +\alpha_1 >0,\ \ \alpha_1-2\beta -\ell>0,
\end{equation}
and
\begin{equation}\label{eq:l}
 \alpha_1 -\ell\in(0,1),\quad \beta + \ell \in(0,1), \quad \beta +1 \in (0,1).
\end{equation}
Therefore, the above expression \eqref{eq:int 3} can be transformed into the following form:
\begin{align*}
	|v(\boldsymbol{x}_0)| \leq C(\varepsilon_0,\epsilon_0,k,V_0,C_1)\Big(& \varepsilon^{(\alpha -1)\ell-2\beta -1} + \varepsilon^{\beta -\ell +1+\alpha_1 } + \varepsilon^{\alpha_1-\ell}+ \varepsilon^{(\alpha_1 -1)\ell-2\beta -1} + \varepsilon^{\alpha_1-2\beta-\ell}\Big) \notag \\
	&+ \varepsilon^{\beta}\Big(\varepsilon^\ell + \Oh(\varepsilon)\Big).
\end{align*}
Moreover, taking $\beta=-\frac{2}{3},\,m=\ell=\frac{7}{9}$, we define
\begin{equation}\notag \tau=\min\Big\{\frac{7}{9} \alpha - \frac{4}{9},\,\,\alpha_1-\frac{4}{9},\,\,\alpha_1-\frac{7}{9},\,\,\frac{7}{9} \alpha_1 - \frac{4}{9},\,\,\frac{1}{9},\,\,\frac{1}{3}\Big\}.\end{equation}
From \eqref{eq:b} and \eqref{eq:l}, it follows that $\tau \in (0, 1)$. As a concrete illustration, the choice $\alpha=\alpha_1=\frac{5}{6}$ simultaneously satisfies the conditions specified in \eqref{eq:mini} and \eqref{eq:beta1}, yielding $\tau=\frac{1}{18}$.

In the subsequent step, we consider the remaining portion of $\mathcal{N}_{\varepsilon}$ in a manner analogous to that employed in Theorem \ref{thm:1}. In the analysis of $u$, we can incorporate Proposition \ref{prop:1} and apply the methodologies used in the analysis of $v$ to obtain analogous results. Consequently, we can logically derive Corollary \ref{Cor:1} in the 2D case.
\end{proof}

\medskip \noindent   {\bf 3D Case:} Before commencing the proof, we present several significant estimates that are essential to prove Corollary \ref{Cor:2} in 3D.

\begin{lem} 
	Define
	\begin{align}
		\widetilde{I_1}:&= v (\boldsymbol{x}_0) \int_{D_{\varepsilon}} ( \varphi(\boldsymbol{x}) - \varphi (\boldsymbol{x}_0)) \,e^{\rho \cdot \boldsymbol{x}} \rmd \boldsymbol{x}, \hspace{1.83cm}  \widetilde{I_2}:= \varphi (\boldsymbol{x}_0)  \int_{D_{\varepsilon}} (v(\boldsymbol{x})-v (\boldsymbol{x}_0))  \,e^{\rho \cdot \boldsymbol{x}} \rmd \boldsymbol{x}, \notag \\ 
		\widetilde{I_3}:&=  \int_{D_{\varepsilon}} (\varphi(\boldsymbol{x}) - \varphi(\boldsymbol{x}_0) ) (v(\boldsymbol{x})-v (\boldsymbol{x}_0))\,  e^{\rho \cdot \boldsymbol{x}} \rmd \boldsymbol{x}, \quad \widetilde{I_4}:= \int_{D_{\varepsilon}} \,q\, w \,e^{\rho \cdot \boldsymbol{x}} \rmd \boldsymbol{x}, \label{eq:part3}\\
		\widetilde{I_6} :&= \int_{\Omega_{\varepsilon}\cup \Omega^{\prime}_{\varepsilon}} \big(w \partial_{\nu} u_0 - u_0 \partial_{\nu} w\big) \rmd \sigma. \label{eq:bF}
	\end{align}
	Then, the following estimates hold
	\begin{align}
		|\widetilde{I_1}|& \leq C( V_0)
		\frac{\varepsilon^{\alpha \ell}}{s^3} \big[1+ \Oh(\varepsilon^{2-2\ell})\big]^{\frac{\alpha}{2}} e^{-\varepsilon^m s \delta / 4}, \notag\\
		|\widetilde{I_2}| &\leq C(V_0,C_1)  
		\frac{\varepsilon^{\alpha_1 \ell}}{s^3} \big[1+ \Oh(\varepsilon^{2-2\ell})\big]^{\frac{\alpha_1}{2}} e^{-\varepsilon^m s \delta / 4},\notag \\
		|\widetilde{I_3}|&  \leq C( V_0)
		\frac{\varepsilon^{(\alpha+\alpha_1) \ell}}{s^3} \big[1+ \Oh(\varepsilon^{2-2\ell})\big]^{\frac{\alpha+\alpha_1}{2}} e^{-\varepsilon^m s \delta / 4} , \notag\\
		|\widetilde{I_4}|&\leq C(V_0,C_1) \frac{\varepsilon^{1+\alpha_1}}{s^3}  \big[1+ \varepsilon^{\alpha \ell }\big(1+\Oh(\varepsilon^{2-2\ell})\big)^{\frac{\alpha}{2}}\big]  e^{-\varepsilon^m s \delta / 4},\notag\\
		|\widetilde{I_6}|& \leq C(C_1)\big(\varepsilon^{3+\alpha_1} s + \varepsilon^{2+\alpha_1} \big)  e^{s d_2 \varepsilon^m}\big[ e^{s \varepsilon (d_1 \widetilde{x}_{1,\xi} + d_3 \widetilde{x}_{3,\xi})}+  e^{s d_2 \varepsilon^{\ell}} e^{s \varepsilon (d_1 \widetilde{x}_{1,\xi_1} + d_3 \widetilde{x}_{3,\xi_1})} \big],\label{eq:bF1}
	\end{align}
	where $C_1, \alpha_1$, and $V_0, \alpha$  are defined in \eqref{eq:ap1} and \eqref{eq:ap2}, respectively. The parameters $\ell$ and $\delta$ are specified in \eqref{eq:tube} and \eqref{eq:condition3}, respectively. The points $\left(\widetilde{x}_{1,\xi}, \widetilde{x}_{3,\xi}\right) \in \Omega^0$ and $(\widetilde{x}_{1,\xi_1}, \widetilde{x}_{3,\xi_1}) \in \Omega^{\prime}$ are both free of $\varepsilon$. Here   $\Omega_{\varepsilon}:= \left\{(\varepsilon \widetilde{x}_1,\varepsilon^m,\varepsilon \widetilde{x}_3) \mid (\widetilde{x}_1,\widetilde{x}_3) \in \Omega^0\right\}$ and $\Omega_{\varepsilon}^{\prime}:= \left\{(\varepsilon \widetilde{x}_1,\varepsilon^{m+\ell},\varepsilon \widetilde{x}_3)\mid (\widetilde{x}_1,\widetilde{x}_3) \in \Omega^{\prime}\right\}$.
\end{lem}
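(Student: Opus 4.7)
The plan is to derive each of the five displayed estimates by assembling three standard ingredients: (i) the Hölder continuity of $\varphi := q-1$ (coming from \eqref{eq:q}) and of $v$ (from \eqref{eq:uv2}), which yields pointwise bounds on the integrands; (ii) the 3D geometric bound $|\boldsymbol{x}-\boldsymbol{x}_0|^{\sigma} \leq \varepsilon^{\sigma\ell}\bigl[1+\mathcal{O}(\varepsilon^{2-2\ell})\bigr]^{\sigma/2}$ valid throughout $D_\varepsilon$, which is the analogue of \eqref{eq:x03}; and (iii) the decay of the CGO weight $|e^{\rho\cdot\boldsymbol{x}}| \leq e^{-s\delta|\boldsymbol{x}|}$ supplied by \eqref{eq:condition3}. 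Passing to spherical coordinates and invoking the incomplete-gamma estimate \eqref{eq:gamma0} as in \eqref{eq:IF 3} yields the basic volume bound
\[
\int_{D_\varepsilon} e^{-s\delta|\boldsymbol{x}|}\,\rmd\boldsymbol{x} \leq \frac{C}{s^3}\,e^{-\varepsilon^m s\delta/4},
\]
which is the origin of the $s^{-3}$ factor appearing in every estimate here (in contrast with the $s^{-2}$ factor of Lemma \ref{lem:I2} in 2D).

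Equipped with the above, the bounds on $\widetilde{I_1}$, $\widetilde{I_2}$, and $\widetilde{I_3}$ follow by direct imitation of their 2D counterparts. For $\widetilde{I_1}$, apply $|\varphi(\boldsymbol{x})-\varphi(\boldsymbol{x}_0)| \leq V_0|\boldsymbol{x}-\boldsymbol{x}_0|^{\alpha}$ together with $|v(\boldsymbol{x}_0)| \leq C_1$; for $\widetilde{I_2}$, use $|v(\boldsymbol{x})-v(\boldsymbol{x}_0)| \leq C_1|\boldsymbol{x}-\boldsymbol{x}_0|^{\alpha_1}$ and absorb $|\varphi(\boldsymbol{x}_0)|$ into the constant $V_0$; for $\widetilde{I_3}$, multiply the two Hölder estimates, which produces the combined exponent $(\alpha+\alpha_1)\ell$ on $\varepsilon$ via ingredient (ii). In each case, the basic volume bound above completes the estimate.

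The integral $\widetilde{I_4}$ additionally exploits the uniform bound $\|w\|_{L^\infty(\mathcal{N}_\varepsilon)} \leq C(C_1)\varepsilon^{1+\alpha_1}$ from \eqref{eq:w}. Writing $q = 1+\varphi(\boldsymbol{x}_0)+(\varphi(\boldsymbol{x})-\varphi(\boldsymbol{x}_0))$ splits $\widetilde{I_4}$ into three pieces: two produce factors proportional to $\bigl|\int_{D_\varepsilon} e^{\rho\cdot\boldsymbol{x}}\,\rmd\boldsymbol{x}\bigr|$ handled by the basic bound, while the third picks up an extra $\varepsilon^{\alpha\ell}[1+\mathcal{O}(\varepsilon^{2-2\ell})]^{\alpha/2}$ from the Hölder estimate on $\varphi$, yielding the claimed bracketed factor. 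Finally, the boundary piece $\widetilde{I_6}$ in \eqref{eq:bF} is controlled by $\|w\|_{L^\infty}\|\nabla u_0\|_{L^1(\Omega_\varepsilon\cup\Omega'_\varepsilon)} + \|\nabla w\|_{L^\infty}\|u_0\|_{L^1(\Omega_\varepsilon\cup\Omega'_\varepsilon)}$; the $L^\infty$ factors are supplied by \eqref{eq:w}, and the $L^1$ norms were already evaluated in \eqref{eq:Gamma3} (an analogous identity holds on $\Omega'_\varepsilon$), so concatenation immediately gives \eqref{eq:bF1}.

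The main obstacle here is purely bookkeeping rather than conceptual: each estimate produces several scaling factors—powers of $\varepsilon$, the $s^{-3}$ prefactor, and multiple exponentials evaluated at intermediate-value points such as $\widetilde{x}_{1,\xi}, \widetilde{x}_{3,\xi}$—and one must track these carefully so that the final inequalities match the precise form displayed. The substantive analytical work (the CGO decay, the Hölder inequalities, the incomplete-gamma integration) is a direct 3D reprise of what has already been carried out in Lemma \ref{lem:I2}, so no new idea is required.
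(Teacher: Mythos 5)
Your proposal is correct and follows essentially the same route as the paper: Hölder expansions of $\varphi$ and $v$ around $\boldsymbol{x}_0$, the 3D analogue of \eqref{eq:x03}, the CGO decay \eqref{eq:condition3} with spherical-coordinate integration and the incomplete-gamma bound \eqref{eq:gamma0} giving the $s^{-3}e^{-\varepsilon^m s\delta/4}$ factor, the same three-way splitting of $q$ for $\widetilde{I_4}$ using \eqref{eq:w}, and the same $\|w\|_{L^\infty}\|\nabla u_0\|_{L^1}+\|\nabla w\|_{L^\infty}\|u_0\|_{L^1}$ bound with \eqref{eq:Gamma3} (and its analogue on $\Omega'_\varepsilon$) for $\widetilde{I_6}$. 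No gaps.
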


\begin{proof} The proof proceeds similarly to that of Lemma \ref{lem:I2}. For any point $\boldsymbol{x} = (x_1, x_2, x_3) \in D_{\varepsilon}$, by combining \eqref{eq:tube} and \eqref{eq:lm3}, we find that
	\begin{equation}\label{eq:x0 3}
		|\boldsymbol{x} -\boldsymbol{x}_0|^{\alpha} \leq		[\varepsilon^{2\ell}+\Oh(\varepsilon^2)]^{\frac{\alpha}{2}} = \varepsilon^{ \alpha \ell}[1+\Oh(\varepsilon^{2-2\ell})]^{\frac{\alpha}{2}}.
	\end{equation}
By combining \eqref{eq:cgo}, \eqref{eq:condition3}, \eqref{eq:part3}, and \eqref{eq:x0 3}, we obtain
	\begin{align*}
		|\widetilde{I_1}|& \leq | v (\boldsymbol{x}_0)|\|\varphi(\boldsymbol{x})\|_{C^{0,\alpha}(\overline{D}_{\varepsilon})} \varepsilon^{\alpha \ell} \big[1+ \Oh(\varepsilon^{2-2\ell})\big]^{\frac{\alpha}{2}} \int_{D_{\varepsilon}}  e^{- s \delta |\boldsymbol{x}|} \rmd \boldsymbol{x} \notag \\
		&\leq  | v (\boldsymbol{x}_0)|\|\varphi(\boldsymbol{x})\|_{C^{0,\alpha}(\overline{D}_{\varepsilon})} \varepsilon^{\alpha \ell} \big[1+ \Oh(\varepsilon^{2-2\ell})\big]^{\frac{\alpha}{2}} \int_{f_1(\theta,\varphi)}^{f_2(\theta,\varphi)} \int_{\varepsilon^m/2}^{\infty} r^2  e^{- s \delta r} \rmd r \rmd (\theta,\varphi) \notag \\
		&\leq   C(V_0) \varepsilon^{\alpha \ell} \big[1+ \Oh(\varepsilon^{2-2\ell})\big]^{\frac{\alpha}{2}} \int_{ \varepsilon^m s \delta/2}^{\infty} (\frac{t}{s \delta})^{2} e^{-t} \frac{1}{s \delta} \rmd t \notag \\
		&\leq  C(V_0) \frac{\varepsilon^{\alpha \ell}}{s^3} \big[1+ \Oh(\varepsilon^{2-2\ell})\big]^{\frac{\alpha}{2}} \Gamma(3,\varepsilon^m s \delta/2) \notag \\
		& \leq C( V_0)
		\frac{\varepsilon^{\alpha \ell}}{s^3} \big[1+ \Oh(\varepsilon^{2-2\ell})\big]^{\frac{\alpha}{2}} e^{-\varepsilon^m s \delta / 4}.
	\end{align*}
Note that we have utilized \eqref{eq:gamma0} in the last inequality in $|\widetilde{I_1}|$. Thus, by applying \eqref{eq:w} and using a similar approach as above, we derive
	\begin{align*}
		|\widetilde{I_4}|  &\leq \left|\varphi(\boldsymbol{x}_0)\int_{D_{\varepsilon}} w e^{\rho\cdot \boldsymbol{x}}\rmd \boldsymbol{x}\right|+ \left| \int_{D_{\varepsilon}} (\varphi(\boldsymbol{x})-\varphi(\boldsymbol{x}_0))w e^{\rho\cdot \boldsymbol{x}}\rmd \boldsymbol{x}\right|+  \left|\int_{D_{\varepsilon}} w e^{\rho\cdot \boldsymbol{x}}\right|\rmd \boldsymbol{x}\notag\\
		& \leq C(V_0,C_1) \varepsilon^{1+\alpha_1} \bigg(\left|\int_{D_{\varepsilon}}  e^{\rho\cdot \boldsymbol{x}}\rmd \boldsymbol{x}\right|+ \left|\int_{D_{\varepsilon}} |\boldsymbol{x}- \boldsymbol{x}_0|^{\alpha}  e^{\rho\cdot \boldsymbol{x}}\rmd \boldsymbol{x}\right| \bigg) \notag\\
		& \leq C(V_0,C_1) \frac{\varepsilon^{1+\alpha_1}}{s^3}  \Big[1+ \varepsilon^{\alpha \ell }\big(1+\Oh(\varepsilon^{2-2\ell})\big)^{\frac{\alpha}{2}}\Big]  e^{-\varepsilon^m s \delta / 4}.
	\end{align*}
After that, based on \eqref{eq:varphi}, \eqref{eq:vx}, and \eqref{eq:x0 3}, we obtain that
	\begin{align*}
		|\widetilde{I_3}|& \leq \| v (\boldsymbol{x})\|_{C^{0,\alpha_1}(\overline{D}_{\varepsilon})} \|\varphi(\boldsymbol{x})\|_{C^{0,\alpha}(\overline{D}_{\varepsilon})} \varepsilon^{(\alpha+\alpha_1) \ell} \Big[1+ \Oh(\varepsilon^{2-2\ell})\Big]^{\frac{\alpha+\alpha_1}{2}} \int_{D_{\varepsilon}}  e^{- s \delta |\boldsymbol{x}|} \rmd \boldsymbol{x} \notag \\
		& \leq C( V_0)
		\frac{\varepsilon^{(\alpha+\alpha_1) \ell}}{s^3} \Big[1+ \Oh(\varepsilon^{2-2\ell})\Big]^{\frac{\alpha+\alpha_1}{2}} e^{-\varepsilon^m s \delta / 4}.
	\end{align*} 
The estimate for $\widetilde{I_2}$ can be determined through a similar approach. Then, by combining \eqref{eq:w}, \eqref{eq:Gamma3}, and \eqref{eq:bF}, we find
	\begin{align}
		|\widetilde{I_6}|  & \leq \|w\|_{L^{\infty}(\Omega_{\varepsilon} \cup \Omega_{\varepsilon}^{\prime})} \|\nabla u_0\|_{L^{1}(\Omega_{\varepsilon} \cup \Omega_{\varepsilon}^{\prime})} + \|\nabla w\|_{L^{\infty}(\Omega_{\varepsilon} \cup \Omega_{\varepsilon}^{\prime})} \| u_0\|_{L^{1}(\Omega_{\varepsilon} \cup \Omega_{\varepsilon}^{\prime})}  \notag\\[1mm]
		& \leq C(C_1)\big(\varepsilon^{1+\alpha_1} s + \varepsilon^{\alpha_1} \big) \| u_0\|_{L^{1}(\Omega_{\varepsilon} \cup \Omega_{\varepsilon}^{\prime})} \notag\\[1mm]
		& \leq C(C_1)\big(\varepsilon^{3+\alpha_1} s + \varepsilon^{2+\alpha_1} \big)  e^{s d_2 \varepsilon^m}\Big[ e^{s \varepsilon (d_1 \widetilde{x}_{1,\xi} + d_3 \widetilde{x}_{3,\xi})}+  e^{s d_2 \varepsilon^{\ell}} e^{s \varepsilon (d_1 \widetilde{x}_{1,\xi_1} + d_3 \widetilde{x}_{3,\xi_1})} \Big]. \notag
	\end{align}
	Here, $\Omega_{\varepsilon}^{\prime} := \{(\varepsilon \widetilde{x}_1, \varepsilon^{m+\ell}, \varepsilon \widetilde{x}_3) \mid (\widetilde{x}_1, \widetilde{x}_3)\in \Omega^{\prime}\}$, with $\Omega^{\prime}$ being a bounded Lipschitz domain independent of $\varepsilon$. The point $(\widetilde{x}_{1,\xi_1}, \widetilde{x}_{3,\xi_1}) \in \Omega^{\prime}$ is free of $\varepsilon$.

	The proof is now complete.
\end{proof}

\begin{proof}[ \textbf{Proof of Corollary \ref{Cor:2} in 3D}] 
We will prove the conclusion in Corollary \ref{Cor:2} for the scenarios of thin ends and narrow ends. The proof for the case where $\mathcal{N}_\varepsilon$ represents a narrow end is similar to that of Theorem \ref{thm:1} in 3D. Here, we will focus solely on the proof for the case where $\mathcal{N}_\varepsilon$ represents a thin end. Analogous to \eqref{eq:I56}, we define
\begin{equation*}
			\widetilde{I}_5:= \varphi (\boldsymbol{x}_0) v (\boldsymbol{x}_0) \int_{D_{\varepsilon}} e^{\rho \cdot \boldsymbol{x}} \rmd \boldsymbol{x}.
		\end{equation*}
Then, based on Lemma \ref{lem:integ}, the integral equation \eqref{eq:integral} can be rewritten as
	\begin{equation}\label{eq:int 13}
		|\widetilde{I_5}|=\left|k^{-2} \widetilde{I_6}-(\widetilde{I_1} + \widetilde{I_2} + \widetilde{I_3} + \widetilde{I_4})\right|.
	\end{equation}
Let $s=\varepsilon^{\beta}$ with $\beta \in (-1,-\frac{2}{3})$, satisfying that
\begin{equation}\label{eq:beta 3}
\max\{2+(1-\alpha)\ell,2+(1-\alpha_1)\ell\}<-3\beta,\ \ \beta + \ell > 0\ \ \mbox{and} \ \ \beta + m > 0,
\end{equation}
where $\alpha_1$ and $\alpha_1$ are defined in \eqref{eq:ap1} and \eqref{eq:ap2}, respectively. Additionally, the parameters $\ell$ and $m$ related to $D_{\varepsilon}$ are established in \eqref{eq:tube}. Set $F \left(\boldsymbol{x}_0,u(\boldsymbol{x}_0),v(\boldsymbol{x}_0)\right) = \varphi(\boldsymbol{x}_0) v(\boldsymbol{x}_0)$ and $h_1(\boldsymbol{x})=1$ in \eqref{eq:I5FF 3} for clarity. Subsequently, by substituting  \eqref{eq:gamm}, \eqref{eq:I5FF 3}, and \eqref{eq:bF1} into \eqref{eq:int 13}, we can obtain that
	 \begin{align}\label{eq:int 33}
		& | v(\boldsymbol{x}_0)| \varepsilon^{\ell+2} \leq  \frac{ e^{s \gamma(a) d_3 } e^{-s \varepsilon d_1  \widetilde{x}_{1,\xi} } e^{-s \varepsilon d_3 \widetilde{x}_{3,\xi}}  }{\big|\Omega \big| \big| \cos(s\varepsilon d_2 x_{1,\xi}) \varphi(\boldsymbol{x}_0)\big|} \left\{ \sum\limits_{n=1}^4 |\widetilde{I_n}| +  k^{-2}|\widetilde{I_6}| \right\} + \varepsilon^2 | v(\boldsymbol{x}_0)| \int_{\varepsilon^m}^{\varepsilon^m + \varepsilon^{\ell}}  |B|\rmd x_2 \notag\\
		\leq & \frac{ e^{s \gamma(a) d_3 } e^{-s \varepsilon d_1  \widetilde{x}_{1,\xi} } e^{-s \varepsilon d_3 \widetilde{x}_{3,\xi}}  }{\big|\Omega \big| \big| \cos(s\varepsilon d_2 x_{1,\xi}) \varphi(\boldsymbol{x}_0)\big|}\bigg\{ C(V_0) 
		\frac{\varepsilon^{\alpha \ell}}{s^3} \Big[1+ \Oh(\varepsilon^{2-2\ell})\Big]^{\frac{\alpha}{2}} e^{-\varepsilon^m s \delta / 4}\notag\\
		& + C(V_0,C_1)  
		\frac{\varepsilon^{\alpha_1 \ell}}{s^3} \Big[1+ \Oh(\varepsilon^{2-2\ell})\Big]^{\frac{\alpha_1}{2}} e^{-\varepsilon^m s \delta / 4} + C( V_0,C_1)
		\frac{\varepsilon^{(\alpha+\alpha_1) \ell}}{s^3} \Big[1+ \Oh(\varepsilon^{2-2\ell})\Big]^{\frac{\alpha+\alpha_1}{2}} e^{-\varepsilon^m s \delta / 4} \notag\\
	& + k^{-2} \big(\varepsilon^{3+\alpha_1} s + \varepsilon^{2+\alpha_1} \big)  e^{s d_2 \varepsilon^m}\Big[ e^{s \varepsilon (d_1 \widetilde{x}_{1,\xi} + d_3 \widetilde{x}_{3,\xi})}+  e^{s d_2 \varepsilon^{\ell}} e^{s \varepsilon (d_1 \widetilde{x}_{1,\xi_1} + d_3 \widetilde{x}_{3,\xi_1})} \Big]  \notag\\
	& + C(V_0,C_1) \frac{\varepsilon^{1+\alpha_1}}{s^3}  \Big[1+ \varepsilon^{\alpha \ell }\big(1+\Oh(\varepsilon^{2-2\ell})\big)^{\frac{\alpha}{2}}\Big]  e^{-\varepsilon^m s \delta / 4} \bigg\}
		 + C_1 \varepsilon^{\ell+2}  s (\varepsilon^m+\varepsilon^{\ell}+ \Oh(\varepsilon)) \notag\\
		\leq & C(\varepsilon_0,\epsilon_0,k,V_0,C_1) e^{\varepsilon^{\beta} \gamma(a) d_3 } e^{- \varepsilon^{\beta +1} (d_1  \widetilde{x}_{1,\xi} + d_3 \widetilde{x}_{3,\xi}) } \bigg\{
		 \varepsilon^{\alpha \ell-3 \beta} \big[1+ \Oh(\varepsilon^{2-2\ell})\big]^{\frac{\alpha}{2}} e^{-\varepsilon^{\beta+m}  \delta / 4}\notag\\
		 & +   
		 \varepsilon^{\alpha_1 \ell-3 \beta} \big[1+ \Oh(\varepsilon^{2-2\ell})\big]^{\frac{\alpha_1}{2}} e^{-\varepsilon^{\beta+m} s \delta / 4} + \varepsilon^{(\alpha+\alpha_1) \ell-3 \beta} \big[1+ \Oh(\varepsilon^{2-2\ell})\big]^{\frac{\alpha+\alpha_1}{2}} e^{-\varepsilon^{\beta+m} s \delta / 4} \notag\\
		 & + k^{-2} \big(\varepsilon^{\beta+3+\alpha_1} + \varepsilon^{2+\alpha_1} \big)  e^{ \varepsilon^{\beta+m}  d_2}\Big[ e^{ \varepsilon^{\beta+1} (d_1 \widetilde{x}_{1,\xi} + d_3 \widetilde{x}_{3,\xi})}+  e^{ \varepsilon^{\beta+\ell} d_2} e^{ \varepsilon^{\beta+1} (d_1 \widetilde{x}_{1,\xi_1} + d_3 \widetilde{x}_{3,\xi_1})} \Big]  \notag\\
		 & + \varepsilon^{1+\alpha_1-3 \beta} \Big[1+ \varepsilon^{\alpha \ell }\big(1+\Oh(\varepsilon^{2-2\ell})\big)^{\frac{\alpha}{2}}\Big]  e^{-\varepsilon^{\beta+m} \delta / 4} \bigg\}
		+ \varepsilon^{\ell+2} 	 \varepsilon^{\beta}\Big(\varepsilon^m+\varepsilon^\ell + \Oh(\varepsilon)\Big). 
	\end{align} 
Then, by combining \eqref{eq:lm3}, and \eqref{eq:beta 3}, one can deduce that
\begin{equation}\label{eq:b1}
	(\alpha-1)\ell - 3 \beta -2>0, \ \  (\alpha_1-1)\ell - 3 \beta -2>0,\ \  \beta -\ell +1+\alpha_1 >0,\ \ \alpha_1-3\beta -\ell-1>0,
\end{equation}
and
\begin{equation}\label{eq:l1}
	\alpha_1-\ell\in(0,1),\quad \beta + \ell \in(0,1), \quad \beta +1 \in (0,1).
\end{equation}
Furthermore, the expression in \eqref{eq:int 33} can be transformed into the following form
	\begin{align*}
		|v(\boldsymbol{x}_0)| \leq C(\varepsilon_0,\epsilon_0,k,V_0,C_1)\Big(&  \varepsilon^{(\alpha-1)\ell - 3 \beta -2}+ \varepsilon^{(\alpha_1-1)\ell - 3 \beta -2} + \varepsilon^{\beta -\ell +1+\alpha_1} +  \varepsilon^{\alpha_1-3\beta -\ell-1} +  \varepsilon^{\alpha_1-\ell } \Big) \notag \\
		&+ \varepsilon^{\beta}\left(\varepsilon^\ell + \Oh(\varepsilon)\right).
\end{align*}
Moreover, taking $\beta=-\frac{3}{4},\,m=\ell=\frac{7}{9}$, we define 
\begin{equation}\notag
\tau=\min\Big\{\frac{7}{9} \alpha - \frac{19}{36},\,\,\frac{7}{9} \alpha_1 - \frac{19}{36},\,\,\alpha_1-\frac{19}{36},\,\, \alpha_1-\frac{7}{9},\,\,\frac{1}{36},\,\,\frac{1}{4}\Big\}.\end{equation}  By virtue of \eqref{eq:b1} and \eqref{eq:l1}, we have $\tau\in(0,1)$. For instance, setting $\alpha=\alpha_1=\frac{4}{5}$ satisfies \eqref{eq:lm3} and \eqref{eq:beta 3}, yielding $\tau=\frac{1}{45}$.

Analogous to the 2D case, we can establish the result for the remaining portion of 
$\mathcal{N_{\varepsilon}}$
  by employing translations of the region 
$\mathcal{N_{\varepsilon}}$. The details are omitted; thus, we complete the proof. 
\end{proof}

Next, we will prove Theorems \ref{th:ibvp}-\ref{thm:iden} using Theorem \ref{thm:1} from Section \ref{sub:main results} and the aforementioned corollaries.

\begin{proof}[ \textbf{Proof of Theorem \ref{th:ibvp}}]
Since the identity
$
\left(\psi,\Lambda_{f_1}\psi\right)\big|_{\hat{\Gamma}} = \left(\psi,\Lambda_{f_2}\psi\right)\big|_{\hat{\Gamma}}
$
holds on \(\hat{\Gamma} \subset \partial \Omega\), and given that both \(f_1(\boldsymbol{x},u_1)\) and \(f_2(\boldsymbol{x},u_2)\) satisfy the regularity condition \eqref{eq:fg1}, it follows that \(u_1\) and \(u_2\) are solutions to the coupled PDE system with TBCs. More specifically,
\begin{equation*}
\begin{cases}
-\nabla \cdot (h\, \nabla u_1)=f_1(\boldsymbol{x},u_1)& \mbox{in}\ \ \Omega,\\
-\nabla \cdot (h\, \nabla u_2)=f_2(\boldsymbol{x},u_2)& \mbox{in}\ \ \Omega,\\
 \hspace{0.2cm}u_1=u_2,\,\,\partial_{\nu}u_1=\partial_{\nu}u_2& \mbox{on}\ \ \hat{\Gamma}.
\end{cases}
\end{equation*}
With the help of Theorem \ref{thm:1}, we can readily obtain the following estimate
 $$\big|f_1(\boldsymbol{x},u_1(\boldsymbol{x}))-f_2(\boldsymbol{x},u_2(\boldsymbol{x})) \big|\leq C(\varepsilon_0,M_1,M_2,C_1,C_2)  \varepsilon^{\tau}, \ \mbox{with} \ \varepsilon\ll1,
 $$
 where $\tau \in (0,1)$ and $C(\varepsilon_0,M_1,M_2,C_1,C_2)$ is a generic positive constant depending only on $ \varepsilon_0,M_1$,\\$M_2,C_1,C_2$. Thus, we derive the error associated with the right-hand side list in \eqref{eq:err}.
\end{proof}

\begin{proof}[ \textbf{Proof of Theorem \ref{th:operator}}]
From Theorem~\ref{th:ibvp}, we know that the realization \( f_1(\boldsymbol{x},u_1) \) of \(\mathscr{F}_1 \) can approximate the realization \( f_2(\boldsymbol{x},u_2) \) of \( \mathscr{F}_2 \) with an error of \( \mathcal{O}(\varepsilon^{\tau}) \). By substituting both \( f_1(\boldsymbol{x},u_1(\boldsymbol{x})) \) and \( f_2(\boldsymbol{x},u_2(\boldsymbol{x})) \) into \eqref{eq:ibvp} and combining the boundary conditions
\(
\left(\psi,\Lambda_{f_1}(\psi)\right)\big|_{\hat{\Gamma}} = \left(\psi,\Lambda_{f_2}(\psi)\right)\big|_{\hat{\Gamma}},
\) 
and elliptic regularity theory, it follows that \( u_1(\boldsymbol{x}) \) approximates \( u_2(\boldsymbol{x}) \) with an error of \( \mathcal{O}(\varepsilon^{\tau}) \). Since \( f_2 \in \mathcal{M} \), it follows that \( f_2(\boldsymbol{x},u_2(\boldsymbol{x})) \) is \( C^{0,\zeta} \)-continuous in \( (\boldsymbol{x},u_2(\boldsymbol{x})) \). Hence, we can derive
\begin{align*}
\big|\mathscr{F}_1(u_1)(\boldsymbol{x}) - \mathscr{F}_2(u_1)(\boldsymbol{x})\big| &= \big|f_1(\boldsymbol{x},u_1(\boldsymbol{x})) - f_2(\boldsymbol{x},u_1(\boldsymbol{x}))\big|\\ &\leq \big|f_1(\boldsymbol{x},u_1(\boldsymbol{x})) - f_2(\boldsymbol{x},u_2(\boldsymbol{x}))\big| +\big|f_2(\boldsymbol{x},u_2(\boldsymbol{x})) - f_2(\boldsymbol{x},u_1(\boldsymbol{x}))\big| \\ &\leq  \widetilde{C}_2(\varepsilon_0,M_1,M_2,C_1,C_2)(\varepsilon^{\tau} + \varepsilon^{\tau\zeta}).
\end{align*}
This implies that for a single sample $u_1$, the realization of \(\mathscr{F}_1\) approximates that of \(\mathscr{F}_2\) up to an \( \mathcal{O}(\varepsilon^{\tau\zeta}) \) error. The estimate for \( u_2 \) follows analogously.

The proof is complete.
\end{proof}

\begin{proof}[\textbf{Proof of Theorem \ref{thm:visible}}] 
Since the proofs of the scattering problem \eqref{eq:ibvp} for quantum and acoustic wave scattering are analogous, we will focus exclusively on the quantum scattering associated with the physical configuration $V$ that satisfies \eqref{eq:ap3}. To this end, we consider a plane incident wave 
 $w^i:=e^{\mathrm{i}k\boldsymbol{x}\cdot\boldsymbol{d}}$, where $\boldsymbol{d}$ is a unit vector denoting the direction of incidence, along with an inhomogeneous medium $\Omega$. The field $w^i$ represents a solution to the homogeneous Helmholtz equation, specifically,
\begin{equation*}
\Delta w^i+k^2 w^i=0\quad \mbox{in}\quad \mathbb{R}^n.
\end{equation*}

In fact, for the acoustic wave scattering problem, the incident wave can be chosen as plane waves, Herglotz waves, or point-source incidences. Specifically, for the incidence of a point source $w^i$, it should not be too far from the scatterer $\Omega$. This ensures that when $w^i$ is restricted to the interior of the scatterer, the modulus of  $w^i$ has a positive lower bound.

In this proof, we establish the theorem by contradiction. Let  $w\in H^1_{\text{loc}}(\mathbb{R}^n)$ be a solution to \eqref{eym:Sch}. Suppose that  $\Omega$ has one or more thin or narrow ends $\mathcal{N_{\varepsilon}}$ and does not scatter; thus, $\Omega$ is considered invisible, indicating that $w^s=0$. Therefore, $(w,w^i) \in H^1(\Omega) \times H^1(\Omega)$ is the solution to \eqref{eym:Sch}. By Rellich's lemma, the unique continuation principle, and the fact that $\mathcal{N}_\varepsilon\subset \Omega$, we obtain 
\begin{equation}\notag
	\begin{cases}
		- \Delta w -Vw =\lambda w & \text { in } \mathcal{N_{\varepsilon}}, \\ 
		\hspace{0.96cm} - \Delta w^i  =\lambda w^i & \text { in } \mathcal{N_{\varepsilon}}, \\ 
		w=w^i, \,\,\partial_{\nu} w= \partial_{\nu} w^i& \text { on }  \hat{\Gamma},
	 \end{cases}
\end{equation}
where $\hat{\Gamma}$ denotes the lateral boundary of $\mathcal{N_{\varepsilon}}$, and $\hat{\Gamma}$ is of class $C^2$ based on the geometric settings described previously. Then, using Corollary \ref{Cor:1}, we can conclude that
	\begin{equation*}
		|w^i(\boldsymbol{x})|\leq C( \varepsilon_0, \epsilon_0, k, V_0,C_1) \varepsilon^{\tau_1} \quad \mbox{with}\ \ \varepsilon\ll 1.
	\end{equation*}
This contradicts the fact that the incident wave 
 $w^i$ cannot be made too small when restricted to $\mathcal{N_{\varepsilon}}$.
\end{proof}

   \begin{proof}[\textbf{Proof of Theorem \ref{thm:iden}}]
	Let $\mathbf{G}$ denote an unbounded connected component of $\mathbb{R}^n \backslash(\Omega \cup \widetilde{\Omega})$. By contradiction, if \eqref{eq:far} holds, then $\Omega \Delta \widetilde{\Omega}$ contains one or more thin or narrow ends.  Furthermore, for every point $\boldsymbol {x}'$ on the lateral boundary of these  thin or narrow ends, there exists an unbounded path $\Upsilon \subset \mathbb{R}^n\setminus(\Omega\cup \widetilde{\Omega})$ connecting $\boldsymbol {x}'$ to infinity. Since the partial differential operator $\Delta$ is invariant under rigid motion, without loss of generality, we assume that there exists a thin end $\mathcal{N_{\varepsilon}} \subset \mathbf{G} \backslash \Omega$,  as denoted in \eqref{eq:n1}, with $\partial \mathcal{N_{\varepsilon}} \subset \partial \widetilde{\Omega}$.
	
Let $w(\boldsymbol{x} )$ and $\tilde{w}(\boldsymbol{x} )$ denote the total wave fields, and let $w^s(\boldsymbol{x} )$ and $\tilde{w}^s(\boldsymbol{x} )$ denote the scattering wave fields for \eqref{eym:Sch} associated with $\Omega$ and $\widetilde{\Omega}$, respectively, with respect to the same incident wave $w^i(\boldsymbol{x})$. These correspond to the far-field patterns $w_{\infty}(\hat{\boldsymbol{x}}) $ and $\tilde{w}_{\infty}(\hat{\boldsymbol{x}})$ of \(\Omega\) and \(\widetilde{\Omega}\). By \eqref{eq:far} and Rellich's Lemma, we know that
	\begin{equation}\label{eq:t2}
		w^s(\boldsymbol{x})=\tilde{w}^s(\boldsymbol{x}), \ \ \boldsymbol{x}\in \mathbf{G}.
	\end{equation}
By virtue of \eqref{eq:t2} and the transmission conditions in \eqref{eym:Sch}, we have
	\begin{align*}
w(\boldsymbol{x})&=w^i(\boldsymbol{x})+w^s(\boldsymbol{x})=w^i(\boldsymbol{x})+\tilde{w}^s(\boldsymbol{x})= \tilde{w}(\boldsymbol{x})\hspace{2.2cm}  \mbox{on} \ \hat{\Gamma},\\
\partial_{\nu}w(\boldsymbol{x})&=\partial_{\nu}w^i(\boldsymbol{x})+\partial_{\nu}w^s(\boldsymbol{x})=\partial_{\nu}w^i(\boldsymbol{x})+\partial_{\nu}\tilde{w}^s(\boldsymbol{x})= \partial_{\nu}\tilde{w}(\boldsymbol{x}) \ \ \mbox{on} \ \hat{\Gamma},
	\end{align*}
	where $\hat{\Gamma}$ denotes the lateral boundary of  $\mathcal{N_{\varepsilon}}$ as given by \eqref{eq:lateral}. Furthermore, $\tilde{w}$ and $w$ satisfy that 
	\begin{equation*}
		\Delta \tilde{w}+k^2 q \tilde{w}  =0 \quad\mbox{and}\quad \ \Delta w+k^2 w  =0  \quad \mbox{in}\quad\mathcal{N}_{\varepsilon}.
	\end{equation*}
Then, by using Corollary \ref{Cor:2}, we deduce that
	\begin{equation*}
		|w(\boldsymbol{x})|\leq C( \varepsilon_0, \epsilon_0, k, V_0,C_1) \varepsilon^{\tau_2} \leq \Oh(\varepsilon_{\max}^{\tilde{\tau}}), \quad \forall \ \boldsymbol{x} \in \overline{\mathcal{N}}_{\varepsilon} \ \mbox{with}\ \   \varepsilon_{\max}\ll 1.
	\end{equation*}
This leads to a contradiction with \eqref{eq:max} in Definition \ref{def:ad}.

The proof is complete.
\end{proof}

\appendix

\section{}\label{sec:apA}

In this appendix, we verify that  \textbf{Assumption G}  holds when the cross-section $   \Omega_{\varepsilon} \subset \mathcal{N}_{\varepsilon}   $ is a Lipschitz domain for $   n=2   $ or a convex domain with a $   C^2   $ boundary for $   n=3   $. Thus, Assumption G is a generic condition. For instance, Assumption G is also satisfied when $   \Omega_{\varepsilon} \subset \mathcal{N}_{\varepsilon}   $ is a convex Lipschitz domain for $   n=3   $. As an example, when $   \Omega_{\varepsilon}   $ is a rectangle in $   \mathbb{R}^2   $, the generated domain $   \mathcal{N}_{\varepsilon}   $ readily satisfies  \textbf{Assumption G}.

\begin{theorem}\label{th:cover} 
Let $\mathcal{N}_\varepsilon \subset \mathbb{R}^n$ ($n = 2, 3$) be a domain constructed by the parallel translation of the cross-section $\Omega_{\varepsilon}$ along a simple $C^2$-curve $\eta(t)$, where $\eta: I \rightarrow \mathbb{R}^n$ is a graph parametrized over the interval $I = (-L, L)$ with $L \in \mathbb{R}_+$, as defined in Section~\ref{sub:main results}. 
Suppose that \( \Omega_\varepsilon \) is a bounded, simply-connected domain in \( \mathbb{R}^{n-1} \). Additionally, cross-section \( \Omega_\varepsilon\subset\mathcal{N}_{\varepsilon} \) are assumed to be a Lipschitz domain for \( n=2 \) and a convex domain with a \( C^2 \) boundary for \( n=3 \).  The boundary $\partial\mathcal{N}_\varepsilon$ comprises the lateral boundary $\hat{\Gamma}$ given by \eqref{eq:lateral} and the two cross-sectional boundaries $\Omega_{\varepsilon}|_{t=-L}$ and $\Omega_{\varepsilon}|_{t=L}$.  Then, for every point $\boldsymbol{x} \in \mathcal{N}_{\varepsilon}$, there exists a corresponding point $\boldsymbol{x}_0 \in \hat{\Gamma}$ such that  $\boldsymbol{x}_0 - \boldsymbol{x}$ is parallel to the outward unit normal vector $\boldsymbol{\nu}_{\boldsymbol{x}_0}$ at $\boldsymbol{x}_0$.
\end{theorem}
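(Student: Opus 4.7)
The plan is to realize $\boldsymbol{x}_0$ as a nearest-point projection of $\boldsymbol{x}$ onto the lateral boundary $\hat{\Gamma}$ and extract the desired collinearity from first-order optimality. Fix $\boldsymbol{x} \in \mathcal{N}_{\varepsilon}$ and consider the squared-distance functional $f(\boldsymbol{y}) := |\boldsymbol{y} - \boldsymbol{x}|^2$ restricted to the compact set $\overline{\hat{\Gamma}} = \partial\Omega_\varepsilon \times \eta([-L,L])$ (compact as the continuous image of $\partial\Omega_\varepsilon \times [-L,L]$ under the parameterization induced by $\eta$). Then $f$ attains its infimum at some $\boldsymbol{x}_0 \in \overline{\hat{\Gamma}}$.

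Suppose first that $\boldsymbol{x}_0$ lies in the smooth interior part of $\hat{\Gamma}$. Under the hypotheses, this part is a $C^2$ codimension-one submanifold of $\mathbb{R}^n$: in 2D, $\partial\Omega_\varepsilon$ consists of two isolated points, so $\hat{\Gamma}$ is a disjoint union of two $C^2$ graph curves; in 3D, the $C^2$ regularity of both $\eta$ and $\partial\Omega_\varepsilon$ makes $\hat{\Gamma}$ a $C^2$ surface. The first-order optimality condition then gives $\nabla f(\boldsymbol{x}_0) = 2(\boldsymbol{x}_0 - \boldsymbol{x}) \perp T_{\boldsymbol{x}_0}\hat{\Gamma}$. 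Since $\hat{\Gamma}$ has codimension one, its normal space is a line, which forces $\boldsymbol{x}_0 - \boldsymbol{x}$ to be collinear with $\boldsymbol{\nu}_{\boldsymbol{x}_0}$, as required.

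The hard part will be ruling out that the minimizer instead falls on one of the ``seams'' $\partial\Omega_\varepsilon \times \{\eta(\pm L)\}$, where the normal to $\hat{\Gamma}$ is not intrinsically defined and where the point is not in $\hat{\Gamma}$ under the open-interval convention for $I$. My plan for this is to exploit the thinness of $\mathcal{N}_\varepsilon$: because the cross-sectional fiber $\partial\Omega_\varepsilon \times \{\eta(t_*)\} \subset \hat{\Gamma}$ through the cross-section containing $\boldsymbol{x}$ has distance at most $\mathrm{diam}(\Omega_\varepsilon) = \varepsilon$ from $\boldsymbol{x}$, we have $\mathrm{dist}(\boldsymbol{x}, \hat{\Gamma}) \leq \varepsilon$, whereas the seams sit at $\eta$-parameter distance $|t_* \mp L|$ away from that fiber, which is strictly greater than $\varepsilon$ once $\boldsymbol{x}$ lies outside an $\mathcal{O}(\varepsilon)$-collar of the end cross-sections. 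For the remaining borderline $\boldsymbol{x}$ inside that collar, I would instead produce an interior critical point of $f$ directly in $\eta$-adapted local coordinates $(s,t)$ on $\hat{\Gamma}$ (with $s$ an arclength parameter along $\partial\Omega_\varepsilon$), solving $\partial_s f = \partial_t f = 0$ by an implicit-function argument that exploits the $C^2$ smoothness of $\partial\Omega_\varepsilon$ and $\eta$, together with the convexity of $\Omega_\varepsilon$ (in 3D) to secure uniqueness of the $s$-critical point on each fiber. Combining these two cases yields $\boldsymbol{x}_0 \in \hat{\Gamma}$ with $\boldsymbol{x}_0 - \boldsymbol{x}$ collinear with $\boldsymbol{\nu}_{\boldsymbol{x}_0}$ for every $\boldsymbol{x} \in \mathcal{N}_{\varepsilon}$.
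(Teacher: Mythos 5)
Your proposal is essentially the paper's own argument: both minimize the squared distance $\|\boldsymbol{y}-\boldsymbol{x}\|_2^2$ over the lateral boundary, obtain a minimizer by compactness, and read off collinearity of $\boldsymbol{x}_0-\boldsymbol{x}$ with $\boldsymbol{\nu}_{\boldsymbol{x}_0}$ from the first-order optimality condition, using that $\hat{\Gamma}$ is a codimension-one $C^2$ set (a pair of graph curves in 2D, a product surface in 3D). The only divergence is the seam/endpoint case $t_0=\pm L$: your thinness-versus-collar reduction is a reasonable, somewhat more quantitative substitute for the paper's brief one-sided-derivative and ``next nearest point'' treatment, though your implicit-function step for points in the $\mathcal{O}(\varepsilon)$-collar is left as a sketch, much as the paper leaves its corresponding step.
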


\begin{proof}
We prove the theorem for the cases \( n = 2 \) and \( n = 3 \) separately.

\medskip
\noindent \textbf{Case I:}  
For \( n = 2 \), the lateral boundary \( \hat{\Gamma} \) comprises two components, denoted by \( \hat{\Gamma} = \Gamma_1 \cup \Gamma_2 \), where \( \Gamma_1 \) and \( \Gamma_2 \) represent the upper and lower boundaries, respectively. Following the coordinate representations in Subsection~\ref{sub:parameterized}, these components are parametrized as  
\[
\Gamma_1 = \bigl(t, \gamma(t)\bigr) \quad \text{and} \quad \Gamma_2 = \bigl(t, \gamma(t) - \varepsilon\bigr).
\]
To establish our result, we introduce the distance function
\begin{equation}\label{eq:dis}
    F(\boldsymbol{y}) = \|\boldsymbol{y} - \boldsymbol{x}\|_2^2, \quad \boldsymbol{y} \in \Gamma_1,
\end{equation}
where \( \|\cdot\|_2 \) denotes the Euclidean norm. Suppose \( \Gamma_1 \) is closed, then it follows that \( \Gamma_1 \) is compact. Combining this with the continuity of \( F \), we can conclude, by the extreme value theorem, that there exists a minimizer \( \boldsymbol{x}_0 \in \Gamma_1 \) satisfying
\begin{equation*}
    F(\boldsymbol{x}_0) = \min_{\boldsymbol{y} \in \Gamma_1} F(\boldsymbol{y}).
\end{equation*}

Next, we shall demonstrate that the vector \( \boldsymbol{x} - \boldsymbol{x}_0 \) is collinear with the outward unit normal \( \boldsymbol{\nu}(\boldsymbol{x}_0) \) to \( \Gamma_1 \) at \( \boldsymbol{x}_0 \). Given the \( C^2 \)-regularity of \( \Gamma_1 \), we utilize its parametric representation:
\begin{equation*}
    \boldsymbol{y}(t) = (t, \gamma(t)), \quad t \in [-L, L],
\end{equation*}
where \( \boldsymbol{y}'(t) = (1, \gamma'(t)) \) represents the tangent vector, and \( \boldsymbol{y}''(t) \) exists continuously. For any fixed point \( \boldsymbol{x} = (x_1, x_2) \in \mathcal{N}_{\varepsilon} \),  the distance function $F$ given by \eqref{eq:dis} is expressed as
\begin{equation*}
    F(\boldsymbol{y}) = F(t) = (t - x_1)^2 + (\gamma(t) - x_2)^2.
\end{equation*}

To find the critical points, we set the derivative of \( F \) with respect to \( t \) to zero:
\begin{equation}\label{eq:derF}
    F'(t) = 2(\boldsymbol{y}(t) - \boldsymbol{x}) \cdot \boldsymbol{y}'(t) = 0.
\end{equation}
Let \( t_0 \) correspond to the minimizer \( \boldsymbol{x}_0 = \boldsymbol{y}(t_0) \). We analyze two scenarios:

\noindent \textbf{Case 1 (Interior point):} If \( t_0 \in (-L, L) \), equation \eqref{eq:derF} yields

\[
(\boldsymbol{x}_0 - \boldsymbol{x}) \cdot \boldsymbol{y}'(t_0) = 0,
\]
implying orthogonality between \( \boldsymbol{x}_0 - \boldsymbol{x} \) and the tangent vector. In \( \mathbb{R}^2 \), this necessitates

\[
\frac{\boldsymbol{x}_0 - \boldsymbol{x}}{\|\boldsymbol{x}_0 - \boldsymbol{x}\|} = \boldsymbol{\nu}(\boldsymbol{x}_0),
\]
where \( \boldsymbol{\nu}(\boldsymbol{x}_0) \) is the outward unit normal.

\noindent \textbf{Case 2 (Endpoint):} If \( t_0 = \pm L \), we consider the one-sided derivatives. For \( t_0 = -L \) (left endpoint), we examine the right limit:

\[
\boldsymbol{y}'(t_0^+) = \lim_{t \to t_0^+} \boldsymbol{y}'(t),
\]
and proceed analogously to Case 1. The analysis for \( t_0 = L \) (right endpoint) follows in a similar manner.

\medskip
\noindent \textbf{Case II:} For \( n = 3 \), following the coordinate representation in Subsection \ref{sub:parameterized}, we have
\begin{equation}\label{eq:p eta}
    \boldsymbol{\eta}(t) = \big(0,\: t,\: x_3(t)\big), \quad t \in (-L, L), \quad L \in \mathbb{R}_+.
\end{equation}
The cross-section \(\Omega_{\varepsilon}\) is aligned with the \(x_1x_3\)-plane, with the unit normal vector \(\boldsymbol{n}_0 = (0, -1, 0)^{\top}\). The local basis vectors are defined as \(\boldsymbol{e}_1 = (1, 0, 0)^{\top}\) and \(\boldsymbol{e}_3 = (0, 0, 1)^{\top}\). For \(t_0 \in (-L, L)\), we define \(\Omega_{\varepsilon}^{t_0} \) as the cross-section parallel to the cross-section \(\Omega_{\varepsilon} \)  at \(t_0\), and its parametric representation is
\begin{equation}\label{eq:C -L}
\Omega_{\varepsilon}^{t_0} = \left\{ \big( z_1,\: t_0,\: x_3(t_0) + z_3 \big) \mid (z_1, z_3) \in \Omega_{\varepsilon} \right\}.
\end{equation}
Given that \(\partial \Omega_{\varepsilon}^{t_0}\) is a \(C^2\) closed curve, it possesses the parametric representation
\[
\partial \Omega_{\varepsilon}^{t_0} = \left\{ \boldsymbol{w}(\theta) = \big( w_1(\theta),\: w_3(\theta) \big) \mid \theta \in [0, 2\pi) \right\}.
\]
 According to \eqref{eq:p eta}, the domain \(\mathcal{N}_{\varepsilon}\) and the lateral boundary \(\hat{\Gamma}\) are defined as
\begin{align}
\mathcal{N}_{\varepsilon} &= \left\{ \boldsymbol{\eta}(t) + z_1 \boldsymbol{e}_1 + z_3 \boldsymbol{e}_3 \mid t \in (-L, L),\: (z_1, z_3) \in \Omega_{\varepsilon} \right\}, \notag\\
\hat{\Gamma} &= \left\{ \boldsymbol{y}(s, \theta) \mid s \in (-L, L),\: \theta \in [0, 2\pi) \right\}, \label{eq:C Gam}
\end{align}
with
\begin{equation*}
\boldsymbol{y}(s, \theta) = \boldsymbol{\eta}(s) + w_1(\theta) \boldsymbol{e}_1 + w_3(\theta) \boldsymbol{e}_3 = \big( w_1(\theta),\: s,\: x_3(s) + w_3(\theta) \big).
\end{equation*}
The tangent vectors are expressed as
\begin{equation*}
\frac{\partial \boldsymbol{y}}{\partial s} = \boldsymbol{\eta}'(s) = \big( 0,\: 1,\: x_3'(s) \big), \quad
\frac{\partial \boldsymbol{y}}{\partial \theta} = \big( w_1'(\theta),\: 0,\: w_3'(\theta) \big),
\end{equation*}
and the unit normal vector \(\boldsymbol{\nu}_{\hat{\Gamma}}\) associated with \(\hat{\Gamma}\) is
\begin{equation}\label{eq:N Gam}
\boldsymbol{\nu}_{\hat{\Gamma}} = \frac{
    \frac{\partial \boldsymbol{y}}{\partial s} \times \frac{\partial \boldsymbol{y}}{\partial \theta}
}{
    \left\| \frac{\partial \boldsymbol{y}}{\partial s} \times \frac{\partial \boldsymbol{y}}{\partial \theta} \right\|
} = \frac{
    \big( w_3'(\theta),\: -x_3'(s) w_1'(\theta),\: w_1'(\theta) \big)
}{
    \sqrt{ \big(w_3'(\theta)\big)^2 + \big(x_3'(s) w_1'(\theta)\big)^2 + \big(w_1'(\theta)\big)^2 }
}.
\end{equation}
For any point \(\boldsymbol{x} = \big( z_1,\: t,\: x_3(t) + z_3 \big)^{\top} \in \mathcal{N}_{\varepsilon}\), define the distance function
\begin{equation}\label{eq:C F}
F(s, \theta) = \| \boldsymbol{y} - \boldsymbol{x} \|_2^2 = \big( w_1(\theta) - z_1 \big)^2 + (s - t)^2 + \big( x_3(s) - x_3(t) + w_3(\theta) - z_3 \big)^2, \quad \boldsymbol{y} \in \hat{\Gamma}.
\end{equation}

We consider two cases: either \(\boldsymbol{x}_0\) is an interior point of \(\hat{\Gamma}\), or \(\boldsymbol{x}_0 \in \hat{\Gamma} \cap \Omega_{\varepsilon}^{\pm L}\). The interior case follows similarly to the analysis in two dimensions and is therefore omitted. We focus on the case where \(\boldsymbol{x}_0 \in \hat{\Gamma} \cup \Omega_{\varepsilon}^{-L}\) (the case of \(\Omega_{\varepsilon}^{+L}\) is analogous).  

Based on \eqref{eq:C -L} and \eqref{eq:C Gam}, we obtain
\begin{equation*}
\hat{\Gamma} \cap \Omega_{\varepsilon}^{-L} = \left\{ \boldsymbol{y}(-L, \theta) = \big( w_1(\theta),\: -L,\: x_3(-L) + w_3(\theta) \big) \mid \theta \in [0, 2\pi) \right\}.
\end{equation*}
The normal vector associated with \(\Omega_{\varepsilon}^{-L}\) is \(\boldsymbol{\nu}_{-L} = (0, -1, 0)^{\top}\). According to \eqref{eq:N Gam}, \(\boldsymbol{\nu}_{-L} \cdot \boldsymbol{\nu}_{\hat{\Gamma}} \neq 0\), confirming that the surfaces intersect transversely. The set of sub-normal vectors at \(\boldsymbol{x}_0\) can be expressed as
\begin{equation*}
N_{\partial \mathcal{N}_{\varepsilon}}(\boldsymbol{x}_0) = \left\{ \lambda \boldsymbol{\nu}_A + \mu \boldsymbol{\nu}_{\hat{\Gamma}} \mid \lambda, \mu \geq 0,\: \| \lambda \boldsymbol{\nu}_A + \mu \boldsymbol{\nu}_{\hat{\Gamma}} \| = 1 \right\}.
\end{equation*}

The distance function given in \eqref{eq:C F} at \(s = -L\) simplifies to
\begin{equation*}
F_{-L}(\theta) = \big( w_1(\theta) - z_1 \big)^2 + (-L - t)^2 + \big( x_3(-L) - x_3(t) + w_3(\theta) - z_3 \big)^2.
\end{equation*}
If \(\boldsymbol{x}_0 = \big( w_1(\theta_0),\: -L,\: x_3(-L) + w_3(\theta_0) \big)^{\top}\) is an minimum point, then
\begin{equation*}
\frac{\partial F_{-L}}{\partial \theta} \bigg|_{\theta = \theta_0} = 2\big( w_1(\theta_0) - z_1 \big) w_1'(\theta_0) + 2\big( x_3(-L) - x_3(t) + w_3(\theta_0) - z_3 \big) w_3'(\theta_0) = 0.
\end{equation*}
The vector \(\boldsymbol{x}_0 - \boldsymbol{x} = \big( w_1(\theta_0) - z_1,\: -L - t,\: x_3(-L) - x_3(t) + w_3(\theta_0) - z_3 \big)\) satisfies \((\boldsymbol{x}_0 - \boldsymbol{x}) \cdot \frac{\partial \boldsymbol{y}}{\partial \theta}(-L, \theta_0) = 0\). If additionally \((\boldsymbol{x}_0 - \boldsymbol{x}) \cdot \boldsymbol{\eta}'(-L) = 0\), then \(\boldsymbol{x}_0 - \boldsymbol{x}\) is parallel to \(\boldsymbol{\nu}_{\hat{\Gamma}}\), and we set \(\boldsymbol{\nu}_{\boldsymbol{x}_0} = \boldsymbol{\nu}_{\hat{\Gamma}}\). Otherwise, we select the next nearest point \(\tilde{\boldsymbol{x}}_0 \in \hat{\Gamma}\).

The proof is complete. 
\end{proof}

\section{}\label{sec:apB}

In Subsection \ref{sub:visi} and Remark \ref{rem:CD}, we demonstrate that assumption \eqref{eq:uv2} holds for the PDE systems \eqref{eq:s1} and \eqref{eq:h2}, which model quantum and acoustic medium scattering in \eqref{eym:Sch}. For these systems, the solution is given by $ u = w|_{\Omega} $ and $ v = w^i|_{\Omega} $, where $ w = w^i + w^s $ represents the total wave field in \eqref{eym:Sch}, with $ w^i $ and $ w^s $ denoting the incident and scattered waves, respectively. Assumption \eqref{eq:uv2} concerns the regularity of $ u $ and $ v $. For illustration, we consider acoustic medium scattering, noting that the quantum scattering case follows analogously. Assuming a plane wave incident field, $ v = w^i = e^{\mathrm{i} k \boldsymbol{x} \cdot \boldsymbol{d}} $, we have $ v \in H^1_{\mathrm{loc}}(\mathcal{N}_\varepsilon) \cap C^{1,\alpha_1}(\overline{\mathcal{N}_\varepsilon}) $, with the estimate $ \|v\|_{C^{1,\alpha_1}(\overline{\mathcal{N}_\varepsilon})} \leq C_1 $, where $ \alpha_1 \in (0,1) $ and $ C_1 $ is independent of the geometric parameter $ \varepsilon $. Thus, the component of \eqref{eq:uv2} related to $ v $ is satisfied.


In Theorem \ref{thm:re}, we use system \eqref{eq:h2} as an illustration to rigorously prove that the component of \eqref{eq:uv2} related to \( u \) is satisfied. Specifically, we select \( \alpha_1 = \frac{49}{50} \) for \( n = 2, 3 \) and demonstrate that \( u \) exhibits \( C^{1,\frac{49}{50}} \)-regularity on the extended domain \( B_{3R/2} \supseteq \Omega \), with a uniform upper bound \( \|u\|_{C^{1,\frac{49}{50}}(B_{3R/2})} \leq C_1 \), independent of \( \varepsilon \). Here, \( B_{3R/2} \subset \mathbb{R}^n \) denotes the ball centered at the origin with radius \( \frac{3R}{2} \), where \( R \in \mathbb{R}_+ \). To establish the \( C^{1,\frac{49}{50}}(B_{3R/2}) \)-regularity of \( u \), we adopt a proof strategy analogous to that in Lemma 5.9 of \cite{BL}.

\begin{theorem}\label{thm:re}
Let \(\Omega \subset B_R \subset \mathbb{R}^n \,(n=2,3)\) be a bounded Lipschitz domain with thin or narrow ends \(\mathcal{N}_{\varepsilon}\). Here, \(B_R\) denotes a disk or ball centered at the origin with radius \(R\) in \(\mathbb{R}^n\) for some \(R > 0\). Let \(q \in L^{\infty}(\mathbb{R}^n)\) with \(q - 1\) compactly supported in \(\Omega\), and let \(w \in H^2_{loc}(\mathbb{R}^n)\) be a solution to the acoustic medium scattering problem \eqref{eym:Sch}. Then \(w \in C^{1, \frac{49}{50}}(\bar{B}_{3R/2})\), and there exists a constant \(C(R, k, n, \|q\|_{L^\infty}, \|w^i\|_{L^2}) > 0\) that is independent of \(\varepsilon\) such that
\[
\|w\|_{C^{1, \frac{49}{50}}(\bar{B}_{3R/2})} \leq C(R, k, n, \|q\|_{L^\infty}, \|w^i\|_{L^2}).
\]
\end{theorem}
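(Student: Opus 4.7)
The plan is a standard interior elliptic bootstrap, carried out on the fixed concentric balls $B_R\subset B_{3R/2}\subset B_{7R/4}\subset B_{15R/8}\subset B_{2R}$. The crucial point is that every constant arising will depend only on $(R,k,n,\|q\|_{L^\infty},\|w^i\|_{L^2})$, and never on the shape of $\Omega$ or the geometric parameter $\varepsilon$; this is possible precisely because $\mathrm{supp}(q-1)\subset\Omega\subset B_R$, so we can treat $q$ throughout as an element of $L^\infty(B_{2R})$ and discard any finer shape information about $\Omega$.

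First I would establish a uniform $L^2$-bound $\|w\|_{L^2(B_{2R})}\leq C(R,k,n,\|q\|_{L^\infty})\,\|w^i\|_{L^2(B_{2R})}$. This comes from the Lippmann--Schwinger reformulation
\begin{equation*}
w(\boldsymbol{x})+k^2\int_{B_R}\Phi_k(\boldsymbol{x},\boldsymbol{y})\,(q(\boldsymbol{y})-1)\,w(\boldsymbol{y})\,\rmd \boldsymbol{y}\;=\;w^i(\boldsymbol{x})\qquad\text{in }L^2(B_{2R}),
\end{equation*}
where $\Phi_k$ is the outgoing fundamental solution of $\Delta+k^2$. The integral operator $T$ above is compact on $L^2(B_{2R})$, and its operator norm is controlled by $\|q-1\|_{L^\infty(B_{2R})}$ together with constants depending only on $R,k,n$. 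Rellich's lemma combined with the unique continuation principle gives uniqueness for the scattering problem, so Fredholm theory yields invertibility of $I+T$ with inverse norm bounded by $C(R,k,n,\|q\|_{L^\infty})$, delivering the desired $L^2$-bound without referencing $\Omega$.

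Next, I would bootstrap to $L^\infty$. From $\Delta w=-k^2qw\in L^2(B_{2R})$, interior $H^2$-regularity on $B_{15R/8}$ yields
\begin{equation*}
\|w\|_{H^2(B_{15R/8})}\leq C\bigl(1+\|q\|_{L^\infty}\bigr)\|w\|_{L^2(B_{2R})}.
\end{equation*}
In dimensions $n=2,3$ we have $2>n/2$, so the Sobolev embedding $H^2\hookrightarrow L^\infty$ upgrades this to a uniform $L^\infty(B_{15R/8})$-bound. Consequently, $\Delta w=-k^2qw\in L^\infty(B_{15R/8})\subset L^p(B_{15R/8})$ for every $p<\infty$, and the Calder\'on--Zygmund interior $W^{2,p}$-estimate on $B_{7R/4}$ gives
\begin{equation*}
\|w\|_{W^{2,p}(B_{7R/4})}\leq C_p\bigl(1+\|q\|_{L^\infty}\bigr)\|w\|_{L^p(B_{15R/8})}.
\end{equation*}
Taking $p=150$ ensures $1-n/p\geq 49/50$ for both $n=2$ and $n=3$, and the Morrey embedding $W^{2,150}(B_{7R/4})\hookrightarrow C^{1,\frac{49}{50}}(\overline{B}_{3R/2})$ closes the argument.

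The main obstacle is Step~1: verifying that the well-posedness constant genuinely depends only on $(R,k,n,\|q\|_{L^\infty})$ and not on any fine-scale feature of $\mathrm{supp}(q-1)$, in particular not on the high-curvature ends $\mathcal{N}_\varepsilon$. The key observation is that the Lippmann--Schwinger operator is realised on $L^2(B_{2R})$ rather than on $L^2(\Omega)$, and its multiplier $q-1$ is estimated in $L^\infty(B_{2R})$; hence neither its compactness bounds nor the Fredholm resolvent norm can see $\varepsilon$. Once this geometric-parameter-free well-posedness is secured, Steps~2 and~3 propagate the uniformity through standard interior constants, delivering the $\varepsilon$-independent bound $\|w\|_{C^{1,\frac{49}{50}}(\overline{B}_{3R/2})}\leq C(R,k,n,\|q\|_{L^\infty},\|w^i\|_{L^2})$.
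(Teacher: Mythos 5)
Your proposal is correct, and its first half coincides with the paper's Step I: both pass to the Lippmann--Schwinger equation on $L^{2}(B_{2R})$, use compactness of the volume potential and Fredholm/uniqueness (the paper cites the invertibility of $I+K_{B_{2R}}$ from Colton--Kress under $\Re q>0$, $\Im q\geq 0$, and a Schur-test bound on $\|K_{B_{2R}}\|$) to get $\|w\|_{L^{2}(B_{2R})}\leq C(R,k,n,\|q\|_{L^{\infty}})\|w^{i}\|_{L^{2}}$; note that the assertion that the \emph{inverse} norm depends only on $\|q\|_{L^{\infty}}$ is stated rather than proved in both your argument and the paper's, so you are no worse off there (your sign in the Lippmann--Schwinger kernel is flipped relative to the paper's/Colton--Kress's convention, but this is immaterial). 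Where you genuinely diverge is the upgrade to $C^{1,\frac{49}{50}}(\overline{B}_{3R/2})$. The paper exploits that $q-1\equiv 0$ on the annulus $B_{7R/4}\setminus B_{5R/4}$ to get high Sobolev (hence $C^{1,\frac{49}{50}}$) bounds on the boundary data $w|_{\partial B_{3R/2}}$, extends these data, solves an auxiliary Dirichlet problem $\Delta\tilde v=-k^{2}qw$ via Gilbarg--Trudinger Theorems 8.33--8.34, and then identifies $\tilde v\equiv w$ by a maximum-principle uniqueness argument. You instead run a purely interior bootstrap on nested balls: $L^{2}\to H^{2}$ (interior regularity), $H^{2}\hookrightarrow L^{\infty}$ for $n=2,3$, then Calder\'on--Zygmund $W^{2,p}$ estimates with $p=150$ and Morrey's embedding $W^{2,p}\hookrightarrow C^{1,1-n/p}$, which indeed gives exponent $\geq\frac{49}{50}$ in both dimensions. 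Your route avoids the boundary-trace extension, the auxiliary Dirichlet problem, and the identification step, does not even use the support condition $\mathrm{supp}(q-1)\subset B_R$ beyond $q\in L^{\infty}(B_{2R})$, and makes the $\varepsilon$-independence of all constants manifest, since every estimate lives on fixed concentric balls with constants depending only on $R,k,n,\|q\|_{L^{\infty}}$; the paper's approach, by contrast, trades these interior $L^{p}$ tools for the classical $C^{1,\alpha}$ Schauder-type theory of the Dirichlet problem. Both yield the stated bound.
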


\begin{proof}
Since \( w \) is the unique solution to \eqref{eym:Sch}, and due to the well-posedness of the acoustic scattering problem \eqref{eym:Sch}, we conclude that \( w \in H^2_{loc}(\mathbb{R}^n) \). Specifically, we observe that \( w \in H^2(B_{2R}) \) satisfies 
\[
\left(\Delta + k^2 q\right) w = 0 \quad \text{in} \quad B_{2R},
\]
where \( B_{2R} \) denotes the central ball at the origin with a radius of \( 2R \in \mathbb{R}_+ \). Next, the proof is primarily organized into two steps.

\medskip  \noindent {\bf Step I:} Prove that \(\left\lVert w(\boldsymbol{x})\right\rVert_{L^2(B_{2R})} \leq C(R, k, n, \|q\|_{L^\infty}, \|w^i\|_{L^2})\), where \( C(R, k, n, \|q\|_{L^\infty}, \|w^i\|_{L^2}) > 0 \) is independent of \(\varepsilon\).

According to \cite{CK2017}, the acoustic scattering system \eqref{eym:Sch} can be reformulated as a Lippmann-Schwinger integral equation
\begin{equation}\label{eq:LS}
    w(\boldsymbol{x})=w^i(\boldsymbol{x})-k^2\int_{B_{2R}} \Phi(\boldsymbol{x},\boldsymbol{y})(1-q)(\boldsymbol{y})w(\boldsymbol{y}) \rmd \boldsymbol{y},\ \ \boldsymbol{x}\in \mathbb{R}^n,\ n=2,3,
\end{equation}
where \(\Phi(\boldsymbol{x},\boldsymbol{y})\) denotes the fundamental solution of the Helmholtz equation. Specifically,
\[
\Phi(\boldsymbol{x}, \boldsymbol{y}) = \begin{cases}
    \dfrac{\mathrm{i}}{4} H_0^{(1)}(k|\boldsymbol{x}-\boldsymbol{y}|), & \boldsymbol{x} \neq \boldsymbol{y}, \quad n=2, \\[10pt]
    \dfrac{1}{4\pi} \dfrac{e^{\mathrm{i}k|\boldsymbol{x}-\boldsymbol{y}|}}{|\boldsymbol{x}-\boldsymbol{y}|}, & \boldsymbol{x} \neq \boldsymbol{y}, \quad n=3.
\end{cases}
\]
For any fixed \(\boldsymbol{y} \in \mathbb{R}^n\), this solution satisfies \((\Delta + k^2)\Phi(\cdot, \boldsymbol{y}) = 0\) in \(\mathbb{R}^n \setminus \{\boldsymbol{y}\}\).

It is noted that \eqref{eq:LS} can be reformulated as follows: \begin{align}\notag
w(\boldsymbol{x})= w^i(\boldsymbol{x}) - K_{B_{2R}}[w](\boldsymbol{x}), \end{align}
where the operator $K_{B_{2R}}:L^2( B_{2R})\to H^2( B_{2R})$ (see \cite{CK2017}), is defined by 
\begin{align} \notag
K_{B_{2R}}[w](\boldsymbol{x})&:=k^2\int_{B_{2R}} \Phi(\boldsymbol{x},\boldsymbol{y})(1-q)(\boldsymbol{y})w(\boldsymbol{y}) \rmd \boldsymbol{y}.
\end{align}
By Rellich-Kondrachov theorem, the embedding \( H^2(B_{2R}) \hookrightarrow L^2(B_{2R}) \) is compact. This implies that the operator \( K_{B_{2R}}: L^2(B_{2R}) \to L^2(B_{2R}) \) is compact.  
As established in \cite{CK2017}, when
\(\Re q>0\), and\( \Im q \geq 0\), the inverse operator \((I + K_{B_{2R}})^{-1}\) exists and is bounded. By the Schur test, it yields that the $L^2$-operator norm of $K_{B_{2R}}$ is bounded by an upper bound, which only depends on the infinity norm of $q$, $k$, $n$ and $R$.  Therefore, we obtain the estimate:
\begin{equation}\label{eq:w2RL}
\|w\|_{L^2(B_{2R})} \leq \|(I + K_{B_{2R}})^{-1}\| \|w^i\|_{L^2(B_{2R})} \leq C(R, k, n, \|q\|_{L^\infty}) \|w^i\|_{L^2(B_{2R})},
\end{equation}
where the positive number $C(R, k, n, \|q\|_{L^\infty}) $ is independent of $\varepsilon$. 

\medskip  \noindent {\bf Step II:} Prove that $\left\lVert w(\boldsymbol{x})\right\rVert_{C^{1,\frac{49}{50}}(B_{3R/2})} \leq C(R, k, n, \|q\|_{L^\infty}, \|w^i\|_{L^2})$, with 
\( C(R, k, n, \|q\|_{L^\infty}, \|w^i\|_{L^2}) \) \(> 0 \) is independent of $\varepsilon$.

Interior elliptic regularity in \( B_{7R/4} \setminus B_{5R/4} \), where \( q - 1 \equiv 0 \) (see \cite[Theorem 8.10]{Gilbarg}), implies that \( w \in H^s(B_{7R/4} \setminus \overline{B}_{5R/4}) \) for any \( s \geq 0 \), with corresponding norm estimates. Specifically, we take \( s = 3 \) for \( n = 2 \) and \( s = 4 \) for \( n = 3 \).

By the Sobolev embedding theorem \cite{Evans2010}, we obtain
\begin{equation}\label{eq:holder}
    \|w\|_{C^{1, \frac{49}{50}}(B_{7R/4} \setminus B_{5R/4})} \leq C \|w\|_{H^{s}(B_{7R/4} \setminus B_{5R/4})} \leq C \|w\|_{H^2(B_{2R} \setminus B_R)},
\end{equation}
where the constant \( C = C(R, k, n) \) may vary between inequalities. This implies that \( w \) has well-defined boundary values in \( C^{1,\frac{49}{50}}(\partial B_{3R/2}) \). More precisely, there exists an extension \( \varphi \in C^{1,\frac{49}{50}}(\mathbb{R}^n) \), supported in \( B_{7R/4} \setminus B_{5R/4} \), such that \( w|_{\partial B_{3R/2}} = \varphi|_{\partial B_{3R/2}} \).

Consider the Dirichlet problem for $\widetilde{v}$:
\begin{equation}\label{eq:vw}
\Delta \widetilde{v} = -k^2 q w \quad \text{in } B_{3R/2}, \quad\widetilde{v} = \varphi \quad \text{on } \partial B_{3R/2}.
\end{equation}
Since $-k^2 q w \in L^\infty(B_{3R/2})$ and $\varphi \in C^{1,\frac{49}{50}}(\partial B_{3R/2})$, Theorem 8.34 in \cite{Gilbarg} yields the unique solvability of \eqref{eq:vw} in $C^{1,\frac{49}{50}}(\overline{B}_{3R/2})$.
To establish $w = \widetilde{v}$ (and consequently $w \in C^{1,\frac{49}{50}}(\overline{B}_{3R/2})$), we first note that both $v$ and $w$ satisfy the weak formulation of \eqref{eq:vw} in $H^1(B_{3R/2})$. The difference $\widetilde{v} - w$ solves the homogeneous boundary value problem:
\[
\Delta(\widetilde{v} - w) = 0 \quad \text{in } B_{3R/2}, \quad\widetilde{v} - w = 0 \quad \text{on } \partial B_{3R/2},
\]
where the boundary equality holds in the trace sense. Applying the $H^1$-maximum principle for harmonic functions yields $\widetilde{v} \equiv w$ in $H^1(B_{3R/2})$. We thus conclude $w \in C^{1,\frac{49}{50}}(\overline{B}_{3R/2})$.

To complete the argument, we apply Theorem 8.33 in \cite{Gilbarg}, which provides an a priori estimate for $\|\widetilde{v}\|$ in $C^{1,\frac{49}{50}}\left(\bar{B}_{3 R / 2}\right)$ in terms of boundary and source term. Combining this with the Sobolev embedding of $H^2 \hookrightarrow L^{\infty}$ in two and three dimensions, and  H\"older estimate \eqref{eq:holder}, we have
\[
\|w\|_{C^{1,\frac{49}{50}}(\bar{B}_{3R/2})}\leq C\left(\|w\|_{H^1(B_{2R})} + \|k^2(1+q)w\|_{L^\infty(B_{2R})}\right) \leq C \|w\|_{H^2(B_{2R})},
\]
where $C = C(R, k, n, \|q\|_{L^\infty})$. Combining this with \eqref{eq:w2RL} yields
\[
\|w\|_{C^{1,\frac{49}{50}}(\bar{B}_{3R/2})} \leq C\|w\|_{L^2(B_{2R})} \leq C(R, k, n, \|q\|_{L^\infty}, \|w^i\|_{L^2}).
\]

The proof is complete.
\end{proof}

\noindent\textbf{Acknowledgment.} 
The work of H. Diao is supported by National Natural Science Foundation of China  (No. 12371422) and the Fundamental Research Funds for the Central Universities, JLU. The work of H. Liu is supported by the Hong Kong RGC General Research Funds (projects 11311122, 11300821, and 11303125), the NSFC/RGC Joint Research Fund (project  N\_CityU101/21), the France-Hong Kong ANR/RGC Joint Research Grant, A-CityU203/19. The work of Q. Meng is fully supported by a fellowship award from the Research Grants Council of the Hong Kong Special Administrative Region, China (Project No. CityU PDFS2324-1S09).

\end{document}